\documentclass[11pt]{article}
\usepackage{t1enc}
\usepackage[utf8]{inputenc}
\usepackage[english]{babel}
\usepackage{amsmath,amssymb,amsfonts,amsthm,}
\usepackage{array} 
\usepackage[nottoc]{tocbibind}
\usepackage{url}
\usepackage{times,mathptmx} 
\usepackage{enumitem} 
\usepackage{algorithm2e}

\setlist[enumerate,1]{label=\textbf{\arabic*.},itemsep=1pt,topsep=0pt,align=right}
\setlist[enumerate,2]{label=\textit{\alph*)},itemsep=1pt,topsep=0pt,align=right}
\setlist[itemize]{itemsep=1pt, topsep=0pt} 

\usepackage[unicode,breaklinks,colorlinks=true,allcolors=black]{hyperref} 
\usepackage{xcolor} 
\usepackage{geometry} 
\usepackage{graphicx} 
\graphicspath{ {./images/} } 

\usepackage{listingsutf8}
\usepackage{color}
\definecolor{pink}{rgb}{1,0,1}

\definecolor{cwgreen}{rgb}{0.1, 0.55, 0.1}

\newcommand{\R}{\mathbb{R}}
\newcommand{\N}{\mathbb{N}}
\newcommand{\gconv}{\xrightarrow{\ \gamma\ }}
\newcommand{\gGconv}{\xrightarrow{\ \gamma_G\ }}
\newcommand{\Mconv}{\xrightarrow{\ M\ }}
\newcommand{\ddiv}{\operatorname{div}}

\newcommand{\ltfrac}[2]{\mbox{\large$\frac{#1}{#2}$}}

\usepackage{multirow}

\usepackage{mathtools}

\newcommand{\divergence}{\operatorname{div}}
\newcommand{\innerproduct}[2]{\left\langle #1, #2 \right\rangle}

\newcommand{\vertiii}[1]{{\left\vert\kern-0.25ex\left\vert\kern-0.25ex\left\vert #1
		\right\vert\kern-0.25ex\right\vert\kern-0.25ex\right\vert}}

\makeatletter
\def\th@plain{%
	\thm@notefont{}
	\itshape 
}
\makeatother

\def\beq{\begin{eqnarray}}
\def\eeq{\end{eqnarray}}

\def\bbs{{S}_{h, {\Omega_t} }  }
\def\bbq{{M}_{h, {\Omega_t} }  }

\def\cb{{\bf c}}
\def\corr{corresponding}

\def\er{\mathbb{R}}
\def\iv{^{-1}}
\def\nb{\nabla}
\def\resp{respectively}

\def\ve{\varepsilon}
\def\vp{\varphi}
\def\vr{\varrho}

\newcommand{\he}{H^1_0(\Omega)}
\newcommand{\het}{H^1_0(\Omega_t)}

\newcommand{\B}{\overline{B}}

\def\la{\langle}
\def\ra{\rangle}

\def\sm{\smallskip}
\def\bi{\bigskip}
\def\me{\medskip}

\theoremstyle{plain}

\newtheorem{theorem}{Theorem}[section]
\newtheorem{proposition}[theorem]{Proposition}
\newtheorem{corollary}[theorem]{Corollary}

\theoremstyle{definition}

\newtheorem{definition}[theorem]{Definition}
\newtheorem{remark}[theorem]{Remark}
\newtheorem{example}[theorem]{Example}
\newtheorem{lemma}[theorem]{Lemma}
\newtheorem*{proof*}{Proof}
\newtheorem*{proposition*}{Proposition}
\newtheorem*{corollary*}{Corollary}
\newtheorem*{explanation*}{Explanation}
\newcommand{\keywords}[1]{{\renewcommand{\thefootnote}{}%
  \footnotetext{\emph{Keywords:} #1}}}
\newcommand{\subjclass}[1]{{\renewcommand{\thefootnote}{}%
  \footnotetext{\emph{2020 Mathematics Subject Classification:} #1}}}

\makeatletter
\def\thm@space@setup{%
	\thm@preskip=4pt
	\thm@postskip=4pt
}
\makeatother

\makeatletter
\let\origmaketitle\maketitle
\renewcommand{\maketitle}{\begingroup\def\@fnsymbol##1{}\origmaketitle\endgroup}
\makeatother

\title{\vspace{-1.8cm}\bfseries Continuous meets discrete: geometry dependence of the
performance of iterative solutions for nonsymmetric PDEs
in terms of Laplace eigenvalues}

\author{F. Francesconi$^{1}$
\and J. Kar\'atson$^{2,3,*}$
\and B. Lados$^{2}$
\and J. Rowlett$^{1,\dagger}$%
\thanks{$^1$Mathematical Sciences, Chalmers University and the University of Gothenburg, 41296 Gothenburg, Sweden.}%
\thanks{$^2$Department of Applied Analysis and Computational Mathematics \& HUN-REN--ELTE Numerical Analysis and Large Networks Research Group, ELTE E\"otv\"os Lor\'and University, Budapest, Hungary.}%
\thanks{$^3$Department of Analysis and Operations Research, Budapest University of Technology and Economics, Budapest, Hungary.}%
\thanks{$^*$\texttt{kajkaat@caesar.elte.hu}, \quad $^\dagger$\texttt{julie.rowlett@chalmers.se}}%
}

\date{}

\begin{document}

		
	

\maketitle

\subjclass{65N30, 35J25, 35J05}
\keywords{Laplace eigenvalues, continuous eigenvalue dependence,  geometric deformation,  convection-diffusion equation, finite element method, iterative solution, convergence rate.}

\begin{abstract}
It is a classical fact in spectral geometry that the eigenvalues of the Dirichlet
Laplacian depend continuously on the geometry of the underlying domain, under suitable geometric
restrictions. We show that this theoretical property is reflected in a concrete
numerical situation: the convergence rate of iterative solvers for
convection-diffusion problems remains stable under deformations of the domain. The connection is a superlinear estimate for preconditioned GMRES/GCR in which the
relative residual errors are bounded by an explicit expression of the eigenvalues of the
Laplacian, or of a symmetric positive elliptic operator in divergence form. We prove
that this bound depends continuously on the domain, uniformly in the iteration index,
under either regularity hypotheses on the boundary or a topological constraint in dimension two. We also establish   corresponding statements on the discrete level.
Two examples delimit the theory: domains bounded by a fractal curve, where continuity
still holds, and perforated domains, where it fails. The theoretical results, including both  examples with rough domains, are confirmed by numerical experiments.
\end{abstract}

\section{Introduction} 
The eigenvalues of the Laplacian are ubiquitous quantities in the theory and applications of partial differential equations (PDEs). 
Depending on the given models, as is well-known, they are related to
the frequencies of a vibrating membrane, the energy levels of a quantum particle, or the decay rates in heat conduction, and many other applications. Within the theory of 
eigenvalues, it is often a relevant question how sensitive the eigenvalues are to the spatial domain where the PDE is considered, and more specifically, whether they vary continuously with respect to the geometric deformations of the spatial domains.
This question has a long history, and the answer depends on the notion of convergence of domains one adopts; we refer to \cite{henrot,buttazzo} for a general theory, and to \cite{mosco,chenais,sverak1} for the results we shall use. Such results live on the \emph{continuous level}, that is, on the level of the exact PDEs.
 
In practice, solving PDEs is executed using numerical approximation methods, since the exact solution is not attainable with the exception of a few classical special cases.  These numerical methods generally consist of 
a proper discretization of the PDE and then the iterative solution of the arising algebraic problem. In the area of elliptic PDEs, i.e. of 
 Laplacian-related stationary problems, an important class of models 
 is formed by convection-diffusion type equations
 \cite{ESW, Morton, Stynes}, which collect different physical phenomena and lead to nonsymmetric problems in the sense that no energy inner product is generated. In particular, solving more compound fluid dynamical models  of Navier-Stokes type also often involve convection-diffusion  equations as building blocks.
 
 We consider such models   in this paper, where the central point is that the convergence analysis of iterative solution methods leads to characteristic quantities involving the eigenvalues of the Laplacian (or, in some more general situations, of a symmetric positive elliptic operator).
 
More precisely, the relative residuals of the preconditioned GMRES/GCR iteration obey  superlinear bounds involving   the eigenvalues $\lambda_j(\Omega)$  of the Dirichlet Laplacian, or of a symmetric positive elliptic operator in divergence form,  on $\Omega$, which thus carry geometric information of these bounds. 
The spatial domain in these problems is often subject to some perturbation or approximation.
 This may happen in various situations: whenever a curved or rough boundary is replaced by a polygon in order to be meshed, or when  the geometry itself is the unknown (as in shape optimisation, where the same solver is called on several slightly different domains), or when the boundary is only known up to manufacturing or measurement tolerances.
  Consequently, if we wish to ensure that the convergence behaviour  of these iterative methods is stable in this respect, that is, they perform in a robust way under the 
 deformations of the spatial domains, then we are led to the study of the geometric domain dependence of the Laplace eigenvalues. 
 
Altogether, we find that the analysis of a \emph{discrete} question (i.e. the performance of  iterative methods for discretized convection-diffusion type equations) leads to a study on the \emph{continuous level} (i.e. related to the operators in the PDEs). The goal of this paper is to show that this  meeting of the continuous and discrete levels is truly fruitful.
Results on the continuous level turn out to have faithful discrete analogues, which we prove for the eigenvalues and observe numerically for the convergence curves themselves.

	The paper is structured as follows. Section \ref{seccvd} presents  the
	considered class of  convection-diffusion type problems, and describes
	how the analysis of the iterative solution leads to  bounds in terms of  the eigenvalues of the Laplacian.
	Section \ref{s:continuity_geometry} first discusses the relevant notions of geometric convergence of domains, and then we give geometric conditions under which the eigenvalues depend continuously on the domain: convexity, uniform Lipschitz regularity, or, in the plane, a bound on the number of connected components of the complement. We also show that small $C^1$ deformations of a fixed domain fall into this framework, further, that all our results extend to uniformly elliptic operators in divergence form. 
	In particular, we include two extreme examples:  continuous dependence still holds
	for a limit  domain whose boundary is a fractal, whereas it already fails 
	on a proper sequence of domains perforated by shrinking disks. 	
	Section \ref{seccdddiscr} gives some discrete counterparts of the continuity results, both for a smoothly deformed finite element space   and for a sequence of refined discretizations on approximating polygons.
	Section \ref{secnumexp} presents  numerical experiments on the performance of the considered iterative methods. They illustrate 	that the
	continuous dependence on the geometric deformations is preserved from the PDE level in all the discussed situations, and finally, that the violation of continuous dependence for  the perforated domain counterexample  is also reflected in the discrete case.
	 We conclude by indicating how our results might apply beyond convection-diffusion problems to further ones with a similar operator structure.

\section{Iterative solutions of convection-diffusion type problems:  characterizing convergence with eigenvalues}
\label{seccvd}

In this section, we present how the analysis of the numerical solution of convection-diffusion models leads to the eigenvalues of the Laplacian. We first give a brief theoretical background of the studied convection-diffusion problems and their finite element solution, then we discuss the GMRES/GCR iterative methods for solving the discretized problems, together with an accurate (superlinear) estimation of their convergence. Further estimates then establish a connection between these  convergence bounds and the eigenvalues of the Laplacian. 
Finally the appearing inequalities are analyzed with theoretical calculations and numerical tests, showing that the mentioned connection
 to the eigenvalues of the Laplacian is descriptive.

\subsection{The convection-diffusion problem and its discretization}

Let $\Omega \subset \mathbb{R}^2$ be a bounded domain with piecewise smooth boundary,  $f: \Omega\rightarrow\mathbb{R}$ and $\textbf{w}: \overline{\Omega}\rightarrow\mathbb{R}^2$  be given functions.
The following boundary value problem arises in modeling convection-diffusion phenomena, often as a building block of more compound stationary flow models:
\begin{equation}
	\label{nonsym}
	\begin{cases}
		-\Delta u + \textbf{w}\cdot\nabla u = f, \\
		u|_{\partial\Omega} = 0.
	\end{cases}
\end{equation}
Suppose that the following assumptions are satisfied:
\begin{enumerate}[label=(\roman*)]
	\item $\textbf{w}\in C^1(\overline{\Omega},\mathbb{R}^2)$ and $\text{div } \textbf{w} = 0$;
	\item $f \in L^2(\Omega)$.
\end{enumerate}
Then \cite{ESW}  problem  (\ref{nonsym}) has a unique weak solution in the Sobolev space $H^1_0(\Omega)$. 

Although the above problem is our focus, for more generality 
we will consider a wider case where the principal part may include a variable matrix coefficient $G\in L^\infty(\Omega; \, \mathbb{R}^{2\times 2})$
which is uniformly s.p.d. That is, there exists $m > 0$ such that $\lambda_{min}(G(x))\geq m$ (a.e. $x\in\Omega$),
or in other words,
\begin{equation}
\label{eq:nonsymG_below}
	  G(x)\xi\cdot  \xi \ge m|\xi|^2 \qquad a.e.\  x\in \Omega, \ \xi\in \mathbb{R}^2. 
\end{equation}
Thus the PDE in  
\eqref{nonsym} becomes
\begin{equation}
\label{eq:nonsymG_op}
 	-\divergence(G\, \nabla  u) + \textbf{w}\cdot\nabla u = f \, .
\end{equation}

These conditions also allow a unique weak solution $u \in H^1_0(\Omega)$. 
We can obtain the numerical solution of the problem with the finite element method (FEM).  Consider a suitable finite dimensional subspace $V_h \subset H^1_0(\Omega)$. Then the approximate solution of the BVP is the unique solution $u_h \in V_h$ of the following discrete problem:
\begin{equation}
	\label{fem-form}
	\int_\Omega \left(G\, \nabla  u_h \cdot \nabla v_h + \left(\textbf{w}\cdot\nabla u_h\right)v_h\right) =
	\int_\Omega fv_h \qquad (\forall v_h \in V_h)\text{.}
\end{equation}
Here, to enable the inclusion $V_h \subset H^1_0(\Omega)$, it is typically assumed that   $\Omega$ itself is already a polygonal domain, that is, if the original domain had a curved boundary then its polygonal approximation has been done before applying the FEM.
Let us fix a basis in $V_h$ and express $u_h$ as the linear combination of the basis functions $\{\phi_j\}_{j=1}^N$ such that
\begin{equation}
    \label{fem-coeff}
    u_h = \sum\limits_{j=1}^{N} c_j \phi_j
    \text{,}
\end{equation}
where the coefficients $\{c_j\}_{j=1}^N$ are the unknowns to be determined. If we substitute $u_h$ into the weak form (\ref{fem-form}) and set $v := \phi_i\;(i=1,\ldots,N)$, then we obtain a system of linear equations 
\begin{equation}
\label{laer}
A_hc_h  = b_h \, .
\end{equation}
Throughout the paper, we will use the matrices $S_h$, $E_h$ and $M_h$ with entries
\begin{equation}
\label{matrices}
	s_{ij} = \int_\Omega \left(G\, \nabla  \phi_j\cdot\nabla\phi_i\right)
	\text{,} \qquad e_{ij} = \int_\Omega \left(\textbf{w}\cdot\nabla\phi_j\right)\phi_i \text{,} \qquad  m_{ij} := \int_\Omega \phi_i\;\phi_j 
\end{equation}
($i,j=1,...,N$), \resp .
Then the matrix $A_h$ in system \eqref{laer} has a decomposed form
\begin{equation}
\label{decomporig}
A_h = {S_h} + {E_h}\, .
\end{equation}
The matrix $M_h$, called the mass matrix, will appear in representing the $L^2$-norm. 
In these matrices the index $h$ refers to the subspace $V_h$, where $h>0$ is the mesh parameter (typically the maximal diameter of the cells).

\subsection{The GMRES/GCR iterative methods and the convergence estimates} 
\label{s:cgn_est1}	
It is well-known how to approach the numerical solution of system \eqref{laer}, see, e.g. \cite{ESW, Saad}. This system is nonsymmetric, owing to the convection term, hence
we need to use nonsymmetric iterative methods.
There exist several nonsymmetric iterative methods,
of which the most commonly used ones minimize the residual error in the search subspace along the iteration. The most widespread one is GMRES, which gives equivalent errors to the simpler version GCR. 
Moreover, 
to achieve faster convergence, we apply preconditioning, where we choose    the symmetric part of $A_h$, i.e. $S_h := \frac{A_h+A_h^T}{2}$, as a preconditioner \cite{AKK14, Spillane}.
Thus we apply the iterative method to the preconditioned equation 
\begin{equation}
\label{preclaer}
S_h^{-1}A_hc = S_h^{-1}b\, ,
\end{equation}
in which we drop the index $h$ from $c$ and $b$ since indices for vectors will be used for iterates in what follows.
The arising preconditioned GCR iteration is given  below. It also involves
the energy norm of $S_h$, defined as $\|v\|_{S_h} := \sqrt{\innerproduct{{S_h}v}{v}}$.

\textbf{Algorithm.} The preconditioned GCR iteration  
 
\begin{algorithm}[H]
	\label{GCR-alg}
	\small
	$c_0 := \textbf{0}$\;
	
	\sm 
	
		$r_0 := {S_h}^{-1}A_hc_0 - {S_h}^{-1}b$; \qquad
	$p_0 := r_0$; \qquad
	$z_0 := {S_h}^{-1}A_hp_0$\;
	
	\sm 
	
	$k := 0$\;

	\sm 
	
	\While{$\|r_k\|_{S_h} > \operatorname{TOL}$}{
		
		\sm 
		
		$\alpha_k := -\dfrac{\innerproduct{r_k}{z_k}_{S_h}}{\|z_k\|_{S_h}^2}$\;
		\sm 
		
		$c_{k+1} := c_k + \alpha_k p_k$; \qquad
		$r_{k+1} := r_k + \alpha_k z_k$; \qquad
		$y_k := {S_h}^{-1}A_hr_{k+1}$\;
		
		\me
		
		\For{$i=0,1,\ldots,k$}{
			$\beta_{i,k} := -\dfrac{\innerproduct{y_k}{z_i}_{S_h}}{\|z_i\|_{S_h}^2}$\;
		}
		$p_{k+1} := r_{k+1} + \sum\limits_{i=0}^{k} \beta_{i,k}p_i$; \qquad
		$z_{k+1} := y_k + \sum\limits_{i=0}^{k} \beta_{i,k} z_i$\;
		$k := k + 1$\;
	}
\end{algorithm}

\subsubsection{Convergence of the iteration}
Now let us turn to the convergence of the method. First we note that the iteration finds the exact solution at the latest in the $N$th step. However, this theoretical property is not practical, since $N$ is very large (typically in the range $10^5$--$10^{10}$), whereas we obtain a good approximation in far fewer steps (typically fewer than 100).

There are well-known convergence estimates that involve the error through the residual 
error vectors,  defined  in the preconditioned case as 
$$
r_k:={S_h}^{-1}A_hc_k-{S_h}^{-1}b
$$ 
corresponding to system \eqref{preclaer}, 
 measured in the energy ${S_h}$-norm. 
These estimates are based on the above-mentioned residual minimization property 
\begin{equation}
\label{basic-ineq0}
\|r_k\|_{S_h} =
\min\limits_{P_k \in \pi_k^1}\;  \|P_k({S_h}^{-1}A_h) r_0\|_{S_h}
\text{,}
\end{equation}
where $\pi_k^1$ consists of polynomials of degree at most $k$ for which $P_k(0) = 1$, and $P_k(S_h^{-1}A_h)$ is the matrix obtained by evaluating $P_k$ at $S_h^{-1}A_h$.

When $S_h^{-1}A_h$ is $S_h$-normal, as is the case here, there is a basis of
$\mathbb{C}^N$ consisting of eigenvectors of $S_h^{-1}A_h$, and these can be
chosen to be orthonormal with respect to the $S_h$-inner product. Expanding
$r_0$ in this basis, \eqref{basic-ineq0} implies
\begin{equation}
\label{basic-ineqeig}
\|r_k\|_{S_h} \le \min_{P_k \in \pi_k^1} \max_{\lambda \in \sigma(S_h^{-1}A_h)}
|P_k(\lambda)| \, \|r_0\|_{S_h}.
\end{equation}

A Krylov iteration typically exhibits two phases: linear convergence followed by superlinear convergence. Here \emph{linear convergence} means that the average convergence factors are bounded by a constant less than 1, whereas
\emph{superlinear convergence} means that they decrease towards 0. That is,
the corresponding estimates are of the form
\begin{equation}
\label{linsuplin}
\Big(\frac{\|r_k\|_{S_h}}{\|r_0\|_{S_h}}\Big)^{1/k} \le Q < 1
\qquad\text{or}\qquad
\Big(\frac{\|r_k\|_{S_h}}{\|r_0\|_{S_h}}\Big)^{1/k} \le \ve_k
\qquad (k = 1,\dots,N),
\end{equation}
respectively, where $Q$ is a positive constant less than one, and $(\ve_k)$ is a sequence decreasing towards 0. Owing to the
special structure of the matrix $A_h$, superlinear convergence estimates turn
out to be more descriptive than linear ones.

 \subsubsection{Convergence estimation in terms of the eigenvalues of the perturbation matrix}
 
The superlinear convergence estimates depend on the singular values or eigenvalues of matrix ${S_h}^{-1}A_h$ in various forms and convey interesting information as briefly recalled below. 

In our case the basis is a matrix splitting, 
see \cite{eqopsurv, Moret}.
Namely, the matrix $A_h$ can be decomposed into its  symmetric part  ${S_h}$ 
and the antisymmetric part  ${E_h}$ as in \eqref{decomporig},   
where ${S_h}$  and ${E_h}$ have the entries given in 
 \eqref{matrices}.
Now \eqref{decomporig} yields
\begin{equation}
	\label{decomposition}
	{S_h}^{-1}A_h = I + {S_h}^{-1}{E_h}
	\text{,}
\end{equation}
which implies translated spectra:
\begin{equation}
	\label{translsp}
\lambda \in \sigma( {S_h}^{-1}A_h )    \qquad \iff \qquad 
\lambda = 1+\mu \quad \mbox{for some }
\mu \in  \sigma({S_h}^{-1}{E_h}) .
\end{equation}

Moreover, since ${E_h}$ is   antisymmetric,   ${S_h}^{-1}{E_h}$ is  ${S_h}$-antisymmetric (or ${S_h}$-skew-adjoint), i.e.   
$$
\innerproduct{{S_h}^{-1}{E_h}c}{d}_{S_h} = \innerproduct{{E_h}c}{d}= - \innerproduct{c}{{E_h}d}
= - \innerproduct{c}{{S_h}^{-1}{E_h}d}_{S_h}\, .
$$

Let $\mu_j:=\lambda_j({S_h}^{-1}{E_h})$ denote the eigenvalues of ${S_h}^{-1}{E_h}$ with non-increasing absolute values and also taking into account their multiplicity. Since $S_h^{-1}E_h$ is $S_h$-antisymmetric, its eigenvalues
$\mu_j = \lambda_j(S_h^{-1}E_h)$ are purely imaginary, and their moduli
coincide with the singular values $s_j(S_h^{-1}E_h)$ with respect to the
$S_h$-inner product. 
	In what follows, to simplify presentation, 
	we assume that ${E_h}$ is an invertible matrix, i.e., $\mu_j\neq 0$ for all $j$. (This holds, e.g., when 
	$\textbf{w}$ satisfies Definition 1 of global rectifiability from \cite{AKK14}, resulting in a corresponding invertible operator $E$, and when the mesh is fine enough such that ${E_h}$ inherits  invertibility.) We note, however, that even if  ${E_h}$ has zero eigenvalues for $j\ge N_0$, then $N_0$ is typically very large, and moreover,
	we might stop the iteration at step $N_0$ since we have reached the exact solution as shown below in \eqref{varsupbecs}. To summarize, the convergence problem is only interesting as long as $\mu_j\neq 0$.
 
 Altogether, based on the above, there exist real positive numbers  
\begin{equation}
\label{eig-notation}
\mbox{$\rho_1\ge \dots \ge \rho_N$ \ such that } \quad
s_j({S_h}^{-1}{E_h}) = |\lambda_j({S_h}^{-1}{E_h})| = |\mu_j|= \rho_j>0 \quad (\forall  j=1,\dots, N). 
\end{equation}
One can obtain various  convergence estimates for the preconditioned GMRES/GCR method in terms of the values $\rho_j$
from  \eqref{eig-notation}.
The most descriptive one for our study has 
 the following form: there exists a constant $K_0>0$ such that
\begin{equation}
\label{suplingen}
\frac{\|r_k\|_{S_h}}{\|r_0\|_{S_h}} \le \prod_{j=1}^k \frac{1}{\sqrt{1+  \frac{K_0}{\rho_j^2}}} \qquad (k=1,\dots, N) 
\text{.}
\end{equation}
Since $\rho_j$    decreases towards 0, so does the
convergence factor on the right side of \eqref{suplingen} as $k$ increases (see more insights below in Remark \ref{remopeig}).

We first note that a weaker bound, with an additional factor of $2$ in each term,
is easy to prove for $K_0 = 1$. Let
\[
\hat P_k(\lambda) := \prod_{j=1}^k
\frac{\lambda_j(S_h^{-1}A_h) - \lambda}{\lambda_j(S_h^{-1}A_h)} .
\]
Then $\hat P_k \in \pi_k^1$, and $\hat P_k(\lambda_i(S_h^{-1}A_h)) = 0$ for
$i = 1,\dots,k$. Substituting $\hat P_k$ into \eqref{basic-ineqeig}, applying
\eqref{translsp}, and using $|\mu_i| = \rho_i \le \rho_j$ for $i > k \ge j$, we
obtain
\[
\frac{\|r_k\|_{S_h}}{\|r_0\|_{S_h}}
\le \max_{i \ge k+1} |\hat P_k(\lambda_i(S_h^{-1}A_h))|
= \max_{i \ge k+1} \prod_{j=1}^k \frac{|\mu_j - \mu_i|}{|1+\mu_j|}
\le \prod_{j=1}^k \frac{2\rho_j}{\sqrt{1+\rho_j^2}}
= \prod_{j=1}^k \frac{2}{\sqrt{1 + \rho_j^{-2}}} .
\]
The similar estimation for $K_0=1$, without the factors of 2, is much more involved to derive, see \cite[Thm 1]{Moret}, but it is a 
descriptive fact that for any $k$ the expression  on the right side of \eqref{suplingen} has all terms less than 1, which is in accordance with the decrease of the residual norms.

We further note that various consequences of the above bound can be derived, also for similar estimations for standard CG methods for symmetric problems, see, e.g., \cite{Ax94, Nevanlinna, Winther}. Using \eqref{suplingen} with the above quoted value $K_0=1$ and applying well-known means, we have
\begin{equation}
	\label{varsupbecs}
\Big(	\frac{\|r_k\|_{S_h}}{\|r_0\|_{S_h}} \Big)^{1/k} \le
\Big(\prod_{j=1}^k \frac{\rho_j}{     \sqrt{1+ \rho_j^2}} \Big)^{1/k} \le \Big(\prod_{j=1}^k  \rho_j \Big)^{1/k} \le
\frac{1}{k} \sum\limits_{j=1}^k \rho_j
\le \frac{1}{k^{1/p}} \Big( \sum\limits_{j=1}^k \rho_j^p \Big)^{1/p}
\end{equation}
 for any fixed $p\ge 1$. In particular, it has been pointed out on the operator level \cite{Nevanlinna, Winther} that if, for some $p\ge 1$,
\begin{equation}
	\label{meascomp}
  \Lambda_p ({S_h}^{-1}{E_h}) := \sum\limits_{j=1}^\infty \rho_j^p \ < \infty,
\end{equation}
 then, defining $\Lambda := \Lambda_p ({S_h}^{-1}{E_h})^{1/p}$, we obtain
 $$
 \Big(	\frac{\|r_k\|_{S_h}}{\|r_0\|_{S_h}} \Big)^{1/k}  
 \le \frac{\Lambda}{k^{1/p}}  \qquad (k=1,\dots, N).
 $$
 This specifies the sequence 
 $\ve_k:=  \frac{\Lambda}{k^{1/p}}\to 0$ in  \eqref{linsuplin}, and shows that 
  the "$p$-degree of compactness" in \eqref{meascomp} leads to 
 a corresponding order $O(k^{-1/p})$ of the convergence estimate.
 
In the sequel we are interested in the  bound \eqref{suplingen} above.
 In order to have a 
 descriptive estimate, 
  sharper values for the constant $K_0>1$ will be discussed in 
paragraph \ref{subsubsharpK1}.

\begin{remark}
	\label{remopeig}
		The  differential operators  ${A},\; {S},\; {E}$, underlying the above matrices on the continuous PDE level  help us to understand the superlinear behavior. It can be shown (see, e.g.,  \cite{numsd}, Prop. 1.) that the operator ${S}^{-1}{E}: H^1_0(\Omega) \rightarrow H^1_0(\Omega)$ is compact.
		Therefore its eigenvalues  accumulate at zero, hence  
		the estimates on the right side of \eqref{varsupbecs} on the  operator level are sequences that tend to 0. On the matrix level the  values $\rho_j=|\lambda_j({S_h}^{-1}{E_h})|$ decrease close to zero (even closer if the dimension of $V_h$ is large), thus
		the  estimates on the right side of \eqref{varsupbecs} show "numerical convergence" to 0. 
\end{remark}

\subsection{Relationship between the convergence factors and the Laplace eigenvalues}\label{rellapeig}

The singular values $\rho_j = s_j({S_h}^{-1}{E_h})$ in  \eqref{eig-notation} can be further estimated using an upper bound from \cite{AxK}. In the referred article at (13), it is established that there exists a constant $K_1 > 0$  such that
\[
	\|{{S}}^{-1}{E}u\|_{{S}} \leq K_1 \|u\|_{L^2} \qquad (\forall u \in H^1_0(\Omega))
	\text{,}
\]
where $K_1 \leq \dfrac{\|\textbf{w}\|_{L^{\infty}}}{\sqrt{m}}$ with $m$ being the lower bound of $G$ in \eqref{eq:nonsymG_below}.
This can be used similarly as in the proof of Prop. 4.4 in \cite{AxK} to show the desired inequality on the operator level:
\begin{equation}
	\label{S_eigs}
	s_j^2({S}^{-1}{E}) \leq \frac{K_1^2}{\lambda_j({S})}
	\text{,}
\end{equation}
where $\lambda_j({S})$ denote the eigenvalues of operator ${S}$ in  non-decreasing  order. By the definition of matrices ${S_h}$ and ${E_h}$, this inequality is obtained also on the discrete matrix level if we replace the Sobolev space by the FE subspaces:

\begin{proposition}
    Let $M_h$ denote the mass matrix whose elements are defined in \eqref{matrices}.   Then 
    \begin{equation}
    	\label{sing}
    	\rho_j^2 = s_j^2({S_h}^{-1}{E_h}) \leq \frac{   K_1^2   }{\lambda_j(M_h^{-1}{S_h})}
    	\text{.}
    \end{equation}
\end{proposition}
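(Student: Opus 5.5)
The plan is to transfer the operator-level argument behind \eqref{S_eigs} to the finite element subspace $V_h$ and its coefficient space $\mathbb{R}^N$. First we record the discrete counterpart of the bound from \cite{AxK}. For $c\in\mathbb{R}^N$ let $u_h=\sum_j c_j\phi_j\in V_h$. Then $\|c\|_{M_h}=\|u_h\|_{L^2}$ and $\langle E_h c,d\rangle=\int_\Omega(\textbf{w}\cdot\nabla u_h)v_h$. The vector $S_h^{-1}E_hc$ is the coefficient vector of the Galerkin projection $z_h\in V_h$ defined by $\int_\Omega G\nabla z_h\cdot\nabla v_h=\int_\Omega(\textbf{w}\cdot\nabla u_h)v_h$ for all $v_h\in V_h$. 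Testing with $v_h=z_h$ gives
\[
\|S_h^{-1}E_hc\|_{S_h}^2 = \int_\Omega(\textbf{w}\cdot\nabla u_h)\,z_h .
\]
Since $\ddiv\textbf{w}=0$ and $u_h,z_h\in H^1_0$, we can integrate by parts, $\int(\textbf{w}\cdot\nabla u_h)z_h=-\int(\textbf{w}\cdot\nabla z_h)u_h$. This is bounded by $\|\textbf{w}\|_{L^\infty}\|\nabla z_h\|_{L^2}\|u_h\|_{L^2}\le \frac{\|\textbf{w}\|_{L^\infty}}{\sqrt m}\|z_h\|_{S_h}\|u_h\|_{L^2}$. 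Hence
\[
\|S_h^{-1}E_hc\|_{S_h}\le K_1\|c\|_{M_h}\qquad(\forall c\in\mathbb{R}^N),
\]
with the same $K_1$ as on the operator level.

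Next we use the Courant--Fischer characterization of singular values with respect to the $S_h$-inner product. Write $T:=S_h^{-1}E_h$. Then
\[
s_j(T)^2=\min_{\dim W=N-j+1}\ \max_{0\ne c\in W}\frac{\|Tc\|_{S_h}^2}{\|c\|_{S_h}^2}
\le K_1^2\min_{\dim W=N-j+1}\ \max_{0\ne c\in W}\frac{\|c\|_{M_h}^2}{\|c\|_{S_h}^2}.
\]
The last min--max is exactly the $j$-th largest eigenvalue of the generalized problem $M_hc=\nu S_hc$. Its eigenvalues are the reciprocals of the eigenvalues of $M_h^{-1}S_h$. Ordering $\lambda_j(M_h^{-1}S_h)$ nondecreasingly, the $j$-th largest $\nu$ equals $1/\lambda_j(M_h^{-1}S_h)$. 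This yields \eqref{sing}, and the equality $\rho_j=s_j$ is already recorded in \eqref{eig-notation}.

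The main obstacle is keeping the orderings and inner products consistent. The singular values must be taken in the $S_h$-geometry, with nonincreasing order, while the eigenvalues of $M_h^{-1}S_h$ are taken in nondecreasing order. The min--max subspace dimension must be chosen so that the $j$-th largest singular value pairs with the $j$-th smallest eigenvalue. A clean route is to note that $T^*T$ (with the $S_h$-adjoint) is $S_h$-selfadjoint and dominated by $K_1^2S_h^{-1}M_h$ in the sense of quadratic forms. Weyl monotonicity then gives the comparison of the ordered eigenvalues directly.
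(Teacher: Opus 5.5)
Your argument is correct, and its skeleton matches the paper's. You use Courant--Fischer for $s_j^2(S_h^{-1}E_h)$ in the $S_h$-inner product; your $(N-j+1)$-dimensional subspaces $W$ are exactly the $S_h$-orthogonal complements of the paper's $H_{j-1}$. You feed it the pointwise bound $\|S_h^{-1}E_h\textbf{c}\|_{S_h}\le K_1\|\textbf{c}\|_{M_h}$, and then identify the resulting min--max with $K_1^2\lambda_j(S_h^{-1}M_h)=K_1^2/\lambda_j(M_h^{-1}S_h)$.

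The difference is how you obtain the pointwise bound. You re-prove it on $V_h$ by testing with $z_h$ and integrating by parts using $\ddiv\textbf{w}=0$, i.e.\ you repeat the operator-level argument inside the finite element space. The paper instead writes $\|S_h^{-1}E_h\textbf{c}\|_{S_h}$ as a supremum of $|\langle S^{-1}Eu_h,v_h\rangle_S|$ over unit $v_h\in V_h$. It then enlarges the supremum to all of $H^1_0(\Omega)$, giving $\|S_h^{-1}E_h\textbf{c}\|_{S_h}\le\|S^{-1}Eu_h\|_S$, and invokes the operator estimate from \cite{AxK}. In your notation, this is the observation that $z_h$ is the $S$-orthogonal projection of $S^{-1}Eu_h$ onto $V_h$.

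Your version is self-contained and needs no external input. However, it yields the inequality only for the explicit value $K_1=\|\textbf{w}\|_{L^\infty}/\sqrt m$. The paper's version transfers whatever constant is valid on the operator level. This matters because $K_1$ is introduced before the proposition only as some admissible constant with $K_1\le\|\textbf{w}\|_{L^\infty}/\sqrt m$.

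The paper later fixes $K_1:=\|\textbf{w}\|_{L^\infty(B)}/\sqrt m$ in Section~\ref{ss:cont_rate_cgn}, so nothing is lost in the application. Still, your remark ``with the same $K_1$ as on the operator level'' is only justified in general once you add the one-line projection observation.
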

\begin{proof*}
    Let us express the squares of the singular values by their variational characterization:
    \begin{equation*}
        s_j^2({S_h}^{-1}{E_h})
        =
        \min_{H_{j-1}\,\subset\,\mathbb{R}^N}
        \max_{\substack{\textbf{c}\,\perp_{S_h}\, H_{j-1} \\ \textbf{c}\neq \textbf{0}}}
        \frac{\innerproduct{(S_h^{-1}E_h)^*S_h^{-1}E_h\textbf{c}}{\textbf{c}}_{S_h}}{\|\textbf{c}\|_{S_h}^2}
        =
        \min_{H_{j-1}\,\subset\,\mathbb{R}^N}
        \max_{\substack{\textbf{c}\,\perp_{S_h}\, H_{j-1} \\ \textbf{c}\neq \textbf{0}}}
        \frac{\|S_h^{-1}E_h\textbf{c}\|_{S_h}^2}{\|\textbf{c}\|_{S_h}^2}
    \end{equation*}
    where $H_{j-1}$ stands for an arbitrary $(j-1)$-dimensional subspace. The norm in the numerator is bounded as follows:
    \begin{align*}
        \|S_h^{-1}E_h\textbf{c}\|_{S_h}
        &=
    \sup_{\substack{\textbf{d}\in\mathbb{R}^N \\ \|\textbf{d}\|_{S_h}=1}}
        |\innerproduct{E_h\textbf{c}}{\textbf{d}}|
        =
        \sup_{\substack{v_h\in V_h \\ \|v_h\|_{S}=1}}
        \left|\int_\Omega (\textbf{w}\cdot\nabla u_h)v_h\right|
        =  \sup_{\substack{v_h\in V_h \\ \|v_h\|_{S}=1}}
        |\la  Eu_h, v_h \ra_{L^2} | 
        \\ &
 =  \sup_{\substack{v_h\in V_h \\ \|v_h\|_{S}=1}}
        |\la S^{-1} Eu_h, v_h \ra_S | \le 
\sup_{\substack{v \in H^1_0(\Omega) \\ \|v\|_{S}=1}}
         |\la S^{-1} Eu_h, v  \ra_S |
        = \|{{S}}^{-1}{E}u_h\|_{{S}} \leq K_1 \|u_h\|_{L^2} 
        =
        K_1 \|\textbf{c}\|_{M_h}
    \end{align*}
     where $u_h$ and $v_h$ are the functions in $V_h$ determined by the coefficient vectors $\textbf{c}$ and $\textbf{d}$ according to \eqref{fem-coeff}.
    Using the derived estimate and that $\|\textbf{c}\|_{M_h}^2
        =
        \innerproduct{M_h\textbf{c}}{\textbf{c}}
        =
        \innerproduct{S_h^{-1}M_h\textbf{c}}{\textbf{c}}_{S_h}$, the squares of the singular values are bounded as
    \begin{equation*}
       \pushQED{\qed}
       s_j^2({S_h}^{-1}{E_h})
        \leq
        \min_{H_{j-1}\,\subset\,\mathbb{R}^N}
        \max_{\substack{\textbf{c}\,\perp_{S_h}\, H_{j-1} \\ \textbf{c}\neq \textbf{0}}}
        \frac{K_1^2\|\textbf{c}\|_{M_h}^2}{\|\textbf{c}\|_{S_h}^2}
        =
        K_1^2 \lambda_j(S_h^{-1}M_h)
        =
        \frac{K_1^2}{\lambda_j(M_h^{-1}S_h)} \text{.}
        \qedhere
		\popQED
    \end{equation*}
\end{proof*}

\sm 

Since $x\mapsto \frac{x}{\sqrt{1+x^2}}$ is strictly  increasing on $\er^+$,
 using \eqref{sing}, we readily obtain from (\ref{suplingen})  the following upper bound:
\begin{equation}
	\label{relate}
\frac{\|r_k\|_{S_h}}{\|r_0\|_{S_h}} \leq
       \prod_{j=1}^k \frac{1}{ \sqrt{1 +  \, \tilde K \  \lambda_j(M_h^{-1}{S_h}) } } \, ,
\end{equation}
where 
\begin{equation}
\label{tildeK}
\tilde K:= \frac{K_0}{   K_1^2  }\, .
\end{equation}
This can be further estimated using the inequality
\begin{equation}
	\label{third-ineq}
	\lambda_j(M_h^{-1}{S_h}) \geq \lambda_j({S})\quad(\forall j =1,\ldots,N)\text{,}
\end{equation}
which follows from the min-max description of eigenvalues. More details can be found in \cite[Section 8]{boffi}. Hence, the final estimate is
\begin{equation}
	\frac{\|r_k\|_{S_h}}{\|r_0\|_{S_h}} \leq
 \prod_{j=1}^k \frac{1}{ \sqrt{1 +  \, \tilde K \  \lambda_j({S}) } }	
	\text{.}
\end{equation}
That is, altogether, the estimation is recast to a sequence determined by the eigenvalues of operator ${S}$.

\subsection{Eigenvalues of the continuous and discrete Laplacian}
\label{s:cgn_est2}

As seen in the previous subsection, the convergence estimates for the
discretized convection-diffusion problem are ultimately expressed in terms of
the eigenvalues of the operator $S$, where $S = -\Delta$ in the main case
\eqref{nonsym} and $Su = -\divergence(G\,\nabla u)$ in general. These
eigenvalues and their discrete counterparts depend on the domain
$\Omega \subset \R^2$, and this dependence is the focus of this paper. We
therefore use the following notation.

The eigenvalues of the (Dirichlet) Laplacian on $\Omega$ are denoted by
\begin{equation}
\label{conteigsq}
0 < \lambda_1(\Omega) \le \lambda_2(\Omega) \le \dots \to +\infty;
\end{equation}
their basic properties are summarized in Section~\ref{s:continuity_geometry}.
The eigenvalues of the discrete Laplacian, which already appeared in
\eqref{sing}, are denoted by
\begin{equation}
\label{disceigsq}
0 < \lambda^h_1(\Omega) \le \lambda^h_2(\Omega) \le \dots \le \lambda^h_N(\Omega),
\end{equation}
and are defined by the FE eigenproblem
\begin{equation}
\label{weakeigp}
\int_\Omega \nabla u^h_j \cdot \nabla v_h
= \lambda_j^h(\Omega) \int_\Omega u^h_j\, v_h
\qquad (\forall v_h \in V_h).
\end{equation}
Writing $u^h_j = \sum_{\ell=1}^N c_{j,\ell}\,\phi_\ell$ and denoting the
stiffness and mass matrices \eqref{matrices} by
\begin{equation}
\label{shmhomx}
S_{h,\Omega} = \{s_{ij}\}_{i,j=1}^N, \qquad M_{h,\Omega} = \{m_{ij}\}_{i,j=1}^N
\end{equation}
to indicate their dependence on $\Omega$, problem \eqref{weakeigp} becomes
$S_{h,\Omega}\, c_j = \lambda_j^h(\Omega)\, M_{h,\Omega}\, c_j$. Hence
\begin{equation}
\label{disceigsqmtx}
\lambda_j^h(\Omega) = \lambda_j\bigl(M_{h,\Omega}^{-1} S_{h,\Omega}\bigr).
\end{equation}

The same notation is used for the general operator
$Su = -\divergence(G\,\nabla u)$, with $\nabla u^h_j \cdot \nabla v_h$ in
\eqref{weakeigp} and $\nabla\phi_i \cdot \nabla\phi_j$ in \eqref{matrices}
replaced by $G\,\nabla u^h_j \cdot \nabla v_h$ and
$G\,\nabla\phi_i \cdot \nabla\phi_j$, respectively.

\subsection{Sharpness of the convergence estimates}
\label{s:cgn_est2_2}
In the previous subsection, we have established a connection between the ${S_{h, \Omega}}$-norm of the residuals and the eigenvalues of operator ${{S}}$, using the following chain of inequalities:
\begin{align}
\label{ineq-chain1}
\frac{\|r_k\|_{S_{h, \Omega}}}{\|r_0\|_{S_{h, \Omega}}} & \leq \prod_{j=1}^k \frac{1}{\sqrt{1+  \ltfrac{K_0}{  
			 s_j ^2({S_{h, \Omega}\iv}\, {E_{h, \Omega}}) }
			}
	     }
\\
\label{ineq-chain2}
& \leq \prod_{j=1}^k \frac{1}{ \sqrt{1 +  \, \tilde K \  \lambda_j^h(\Omega)  } } 
\\
\label{ineq-chain3}
& \leq 
\prod_{j=1}^k \frac{1}{ \sqrt{1 +  \, \tilde K \  \lambda_j({\Omega}) } }	
\text{,}
\end{align}
where we recall that \eqref{ineq-chain1} follows from \eqref{suplingen} and the notation \eqref{eig-notation}, further, \eqref{ineq-chain2}  follows from \eqref{sing} and the notations \eqref{eig-notation} and \eqref{tildeK},
and finally, \eqref{ineq-chain3}  follows from \eqref{third-ineq}
 and the notations \eqref{conteigsq} and \eqref{disceigsqmtx}.

The aim of this paper is to show that the continuous dependence of the eigenvalues $\lambda_j({\Omega})$, appearing in \eqref{ineq-chain3}, on $\Omega$, is reflected in the 
continuous dependence of the sequence of residual norms, appearing on the left side of \eqref{ineq-chain1}, on $\Omega$. 
For this, it is important to see how descriptive this estimation is.  
Therefore, in this section  we examine the sharpness of the above three intermediate estimates   with  analytical results and numerical experiments, tracing back the inequalities from \eqref{ineq-chain3} to \eqref{ineq-chain1}.

\subsubsection{\texorpdfstring{Sharpness of the estimate \eqref{ineq-chain3}}{Sharpness of the estimate}}
\label{subsubssharp1}

As seen above, using the notations \eqref{conteigsq} and \eqref{disceigsqmtx}, the estimate \eqref{ineq-chain3}  follows directly from inequality \eqref{third-ineq}, which states that
\begin{equation}
\label{third-ineq28}
\lambda_j^h(\Omega) \geq \lambda_j(\Omega)  
\end{equation}
for all $j =1,\ldots,N$. This inequality is sharp in the sense that there is convergence when $h\to 0$, as stated below.

\begin{proposition}[{\cite[Section~8]{boffi}}]
		\label{propbof1} 
 If $\Omega$ is a polygonal domain and $\{ V_h\}_{h>0}$ is a family of piecewise linear finite element subspaces in $\Omega$ where the mesh parameter $h\to 0$, then for all fixed $j \in \mathbb{N}$, we have
 	\[
 	\lambda_j^h(\Omega) \to \lambda_j(\Omega) \qquad \mbox{as} \ h\to 0.
 	\]
 \end{proposition}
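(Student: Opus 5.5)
We prove the convergence $\lambda_j^h(\Omega)\to\lambda_j(\Omega)$ with the classical min--max and projection argument for conforming Galerkin approximations. The lower bound $\lambda_j^h(\Omega)\ge\lambda_j(\Omega)$ is already available as \eqref{third-ineq28}: since $V_h\subset H^1_0(\Omega)$, the Courant--Fischer characterization
\[
\lambda_j^h(\Omega)=\min_{\substack{U\subset V_h\\ \dim U=j}}\ \max_{0\ne v\in U}\frac{\|\nabla v\|_{L^2}^2}{\|v\|_{L^2}^2}
\]
takes its minimum over a smaller family of subspaces than the corresponding formula for $\lambda_j(\Omega)$. It therefore remains to prove $\limsup_{h\to0}\lambda_j^h(\Omega)\le\lambda_j(\Omega)$ for each fixed $j$.

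\emph{Step 1: a trial subspace.} Let $u_1,\dots,u_j$ be $L^2$-orthonormal Dirichlet eigenfunctions for $\lambda_1(\Omega),\dots,\lambda_j(\Omega)$, and set $E_j=\operatorname{span}\{u_1,\dots,u_j\}$. Let $P_h:H^1_0(\Omega)\to V_h$ be the Ritz (energy) projection, so that $\|\nabla(u-P_hu)\|_{L^2}=\min_{v_h\in V_h}\|\nabla(u-v_h)\|_{L^2}$.

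\emph{Step 2: approximability.} We need the density property
\[
\inf_{v_h\in V_h}\|\nabla(u-v_h)\|_{L^2}\to0 \qquad \text{as } h\to0, \quad \text{for every } u\in H^1_0(\Omega).
\]
This holds for shape-regular families of piecewise linear spaces on a polygonal domain. Smooth compactly supported functions are dense in $H^1_0(\Omega)$, and for $\varphi\in C_c^\infty(\Omega)$ the nodal interpolant satisfies $\|\nabla(\varphi-I_h\varphi)\|_{L^2}\le Ch|\varphi|_{H^2}$. A density/$\varepsilon/3$ argument then gives the claim. (No regularity of the eigenfunctions beyond $H^1_0$ is needed, which matters because of reentrant corners.)

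\emph{Step 3: the upper bound.} Since $E_j$ is finite dimensional, Step 2 gives $\sup_{u\in E_j,\ \|u\|_{L^2}=1}\|\nabla(u-P_hu)\|_{L^2}\to0$. By Poincaré's inequality the same holds in the $L^2$ norm. Hence for small $h$ the map $P_h|_{E_j}$ is injective, and $P_hE_j$ is a $j$-dimensional subspace of $V_h$. For $u\in E_j$ with $\|u\|_{L^2}=1$ we have
\[
\|\nabla P_hu\|_{L^2}\le\|\nabla u\|_{L^2}\le\sqrt{\lambda_j(\Omega)} \qquad\text{and}\qquad \|P_hu\|_{L^2}\ge1-\delta_h,\quad \delta_h\to0 .
\]
Inserting $U=P_hE_j$ into the min--max formula yields $\lambda_j^h(\Omega)\le\lambda_j(\Omega)/(1-\delta_h)^2$, which tends to $\lambda_j(\Omega)$. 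Combined with the lower bound, this proves the statement.

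The main obstacle is Step 2. The interpolation estimate requires the mesh family to be shape regular, which is implicitly part of "a family of finite element subspaces with $h\to0$". Density also needs care, because exact eigenfunctions lack $H^2$ regularity on nonconvex polygons. The uniform-in-$E_j$ control in Step 3 is routine by finite dimensionality. The same argument carries over to $-\divergence(G\nabla u)$ with the energy norm replacing $\|\nabla\cdot\|_{L^2}$.
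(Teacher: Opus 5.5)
Your proof is correct and follows the same route as the Boffi Section~8 argument cited for this statement, whose upper bound the paper quotes in the proof of Theorem~\ref{refined-fe}: the inclusion $V_h\subset H^1_0(\Omega)$ in the min--max principle gives the lower bound, and projecting the first $j$ eigenfunctions into $V_h$ gives a trial space for the upper bound, so that $\lambda_j^h(\Omega)\le\lambda_j(\Omega)/(1-\delta_h)^2$. The only difference is how $\delta_h\to0$ is obtained: you get it qualitatively from density and the Poincar\'e inequality, which covers nonconvex polygons given shape regularity, whereas the paper's quoted bound $\delta_h\le\overline{C}h^2\lambda_j(\Omega)$ is quantitative via $H^2$-regularity on convex polygons, and that is what gives the uniformity over the varying domains $\Omega_n$ needed there.
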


Figure \ref{num-test-3}  illustrates the sharpness on the 100 smallest eigenvalues. 
We plot the eigenvalues of the operator $S$ (black) and its discrete counterpart (red) on a uniform mesh with 300 nodes in each direction  of the unit square.
On the left, we consider the Laplacian
$$
Su=- \Delta u,
$$
and on the right, the more general operator
\begin{equation}
\label{gengve}
 {S}u= - \divergence(G\, \nabla  u)  \qquad \mbox{with } \quad G := \begin{pmatrix}
\ve & 0\\
0 & 1
\end{pmatrix} \, ,
\end{equation}
where $\ve=0.01$. We can observe in both cases that the operator eigenvalues $\lambda_j(\Omega) $ are indeed always below the matrix eigenvalues $\lambda_j^h(\Omega) $ and the two curves are almost identical.

\begin{figure}[h!]
	\centering
	\includegraphics[width=1\textwidth]{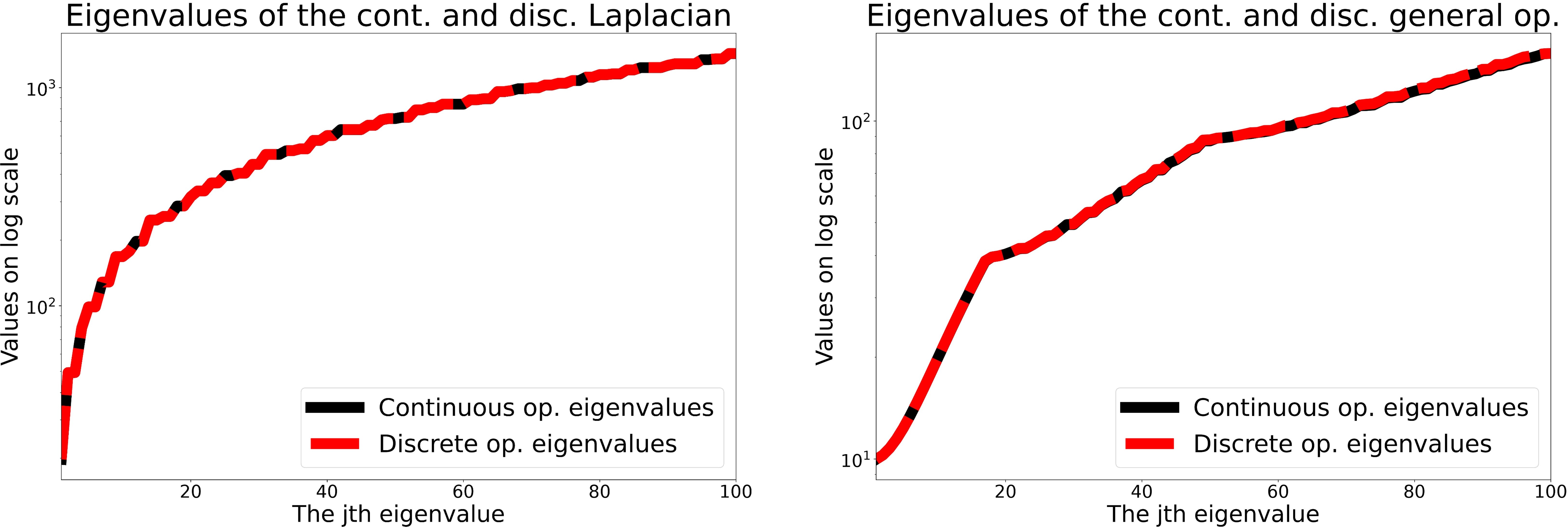}
	\caption{The 100 smallest eigenvalues of operator ${S}$ and the matrix $M_{h, \Omega}^{-1}{S_{h, \Omega}}$ in the numerical tests.}
	\label{num-test-3}
\end{figure}

\subsubsection{Sharpness of the estimate \texorpdfstring{\eqref{ineq-chain2}}{(ineq-chain2)}}
\label{sharp-ii}

The estimate \eqref{ineq-chain2} is a consequence of the inequality  \eqref{sing}  
that relates the singular values of ${S_{h, \Omega}\iv}\, {E_{h, \Omega}}$ to the eigenvalues $\lambda_j^h(\Omega)$ (with the notation \eqref{disceigsqmtx}). 
We first discuss the sharpness of the similar inequality on the operator level, which then indicates sharpness  on the discrete level in the sense of the approximation property used in subsection  \ref{subsubssharp1}.

On the operator level, the corresponding inequality is  \eqref{S_eigs}:
\begin{equation}
\label{S_eigs2}
s_j^2({S}^{-1}{E}) \leq \frac{K_1^2}{\lambda_j({S})}
\text{.}
\end{equation}
We show in an example that this bound can be optimal. Let us consider the operator on the left side of \eqref{eq:nonsymG_op} on the unit square $\Omega = (0,1)^2$, where $S$ is from \eqref{gengve} and $\textbf{w}$ is parallel to the $x$-axis, that is, with the coefficients 
\begin{equation}
\label{coeff27}
G := \begin{pmatrix}
\ve & 0\\
0 & 1
\end{pmatrix} \, , \qquad \textbf{w} :=  (\beta, 0)
\end{equation}
where $\epsilon > 0$ and $\beta > 0$ are constants.  In this  case  we can calculate analytically the values in \eqref{S_eigs2} with separation of variables, given conveniently with double-index notation: 
\begin{equation}
s_{k,l}^2({S}^{-1}{E})  = \dfrac{ \beta^2}{4\pi^2 \epsilon (   k^2 +  \epsilon  l^2)}
\text{,} \qquad
\lambda_{k,l}({S})  = \pi^2 (  k^2 +  \epsilon  l^2)\text{,}
\end{equation}
where double indexing $k,l \in \mathbb{N}^+$ is used.
(This covers the Laplacian principal part as a special case with $\epsilon=1$.)
We observe that
	\begin{equation}
	\label{sing-est}
	s_{k,l}^2(S^{-1}E) = \frac{\kappa^2}{\lambda_{k,l}(S)}
	\qquad \text{with} \quad \kappa^2 = \frac{\beta^2}{4\epsilon} ,  
	\end{equation}
thus these values as subsets of $\er$ are directly proportional.
	Taking also into account that each $s_{k,l}$ has
	multiplicity two, listing the singular values and eigenvalues with multiplicity
	in single-index form gives
	\[
	s_{2m-1}^2(S^{-1}E) = s_{2m}^2(S^{-1}E) = \frac{\kappa^2}{\lambda_m(S)}
	\qquad (m \in \mathbb{N}^+).
	\]
	By Weyl's law, $\lambda_{2m}/\lambda_m \to 2$, {hence we obtain asymptotic proportionality:
	\[
	\lim_{j\to\infty} s_j^2(S^{-1}E)\,\lambda_j(S) = 2\kappa^2 .  
	\]

On the discrete matrix level,  the  sharpness of the inequality  \eqref{sing}
is understood in the sense that one can get close to equality if the mesh is sufficiently fine. This follows in the above example because the discrete singular values converge to those of the corresponding operators when the mesh parameter $h\to 0$.  
To illustrate this, we present   two numerical tests with the coefficients \eqref{coeff27} on a uniform mesh with 300 nodes in each direction. Firstly, let us consider $\epsilon=\beta=1$ (i.e. $S=-\Delta$), and secondly, let  $\epsilon = 0.01$ as in \eqref{gengve} and let $\beta=1$. The 100 largest numerically calculated   values  of $ s_{k,l}^2({S_{h, \Omega}}^{-1}{E_{h, \Omega}})$ and 
 $\kappa^2 /  \lambda_{k,l}^h(\Omega)$  (that is, the discrete analogues of the two sides of   (\ref{sing-est})) can be seen in Figure \ref{num-test-2}, 
showing the values are indeed close to equal.

\begin{figure}[h!]
	\centering
	\includegraphics[width=1\textwidth]{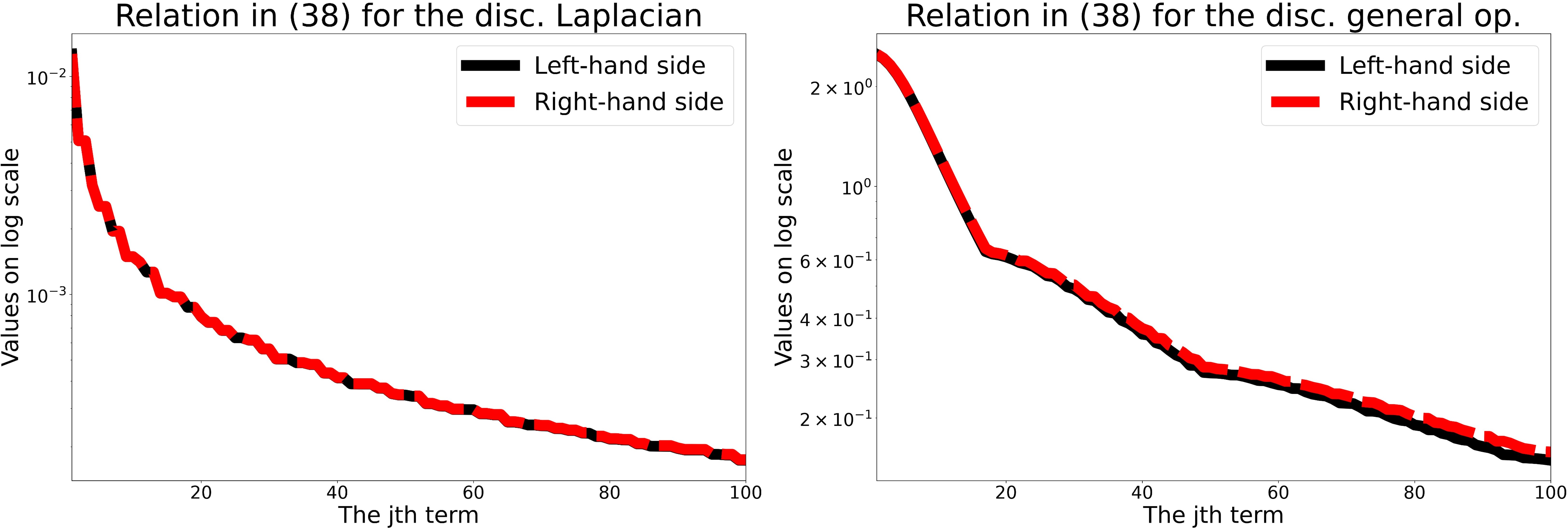}
	\caption{Numerically calculated   values  of $ s_{j}^2({S_{h, \Omega}}^{-1}{E_{h, \Omega}})$ and $\kappa^2 /  \lambda_{j}^h(\Omega)$ 		
		  on a uniform mesh.   }
	\label{num-test-2}
\end{figure}

\subsubsection{Sharpness of the estimate \texorpdfstring{\eqref{ineq-chain1}}{(ineq-chain1)}}
\label{subsubsharpK1}

This inequality is the superlinear convergence estimate (\ref{suplingen}):
\begin{equation}
\label{suplingen2}
\dfrac{\|r_k\|_{S_{h, \Omega}}}{\|r_0\|_{S_{h, \Omega}}}  \le \prod_{j=1}^k \frac{1}{\sqrt{1+  \frac{K_0}{\rho_j^2}}} \qquad (k=1,\dots, N) 
\text{,}
\end{equation}
wherein \, 
$\rho_j:=s_j({S_{h, \Omega}\iv}\, {E_{h, \Omega}})$. 
As discussed in subsection \ref{s:cgn_est1}, we know $K_0\ge 1$ from \cite{Moret} but the sharp value of $K_0$ is yet unknown. 
By extensive numerical experimentation, we found that a sharp superlinear  estimate might hold with the value 
$$
K_0=2 .
$$ 
Namely, the right side of (\ref{suplingen2}) with $K_0=2$ gave an upper bound for all runs, whereas for some of them it produced the true convergence curve with numerically negligible difference. 
To illustrate the latter, we present here two  tests. Let $\Omega_1 := (0,0.75)\times(0,2)$ and let $\Omega_2$ be the inscribed square of the unit circle with diagonals coinciding with the axes. We discretize these domains uniformly with a grid consisting of 780 and 1054 nodes and we use first order Courant elements. We set the data of (\ref{coeff27}) the following way:
\begin{itemize}
	\item 
	The coefficients \eqref{coeff27} are used.  
	Firstly, we consider $\epsilon=1$ (Laplacian) and $\beta=2$ on $\Omega =\Omega_1$, 
	and secondly, let  $\epsilon = 0.001$ and $\beta=0.2$ on $\Omega = \Omega_2$. 
\item The vector $b_h$ from (\ref{laer}) is the sum of the real and imaginary parts of eigenvectors corresponding to the 100 eigenvalues of matrix ${S_{h, \Omega}}^{-1}{E_{h, \Omega}}$ with largest magnitude.
\end{itemize}
The result of the numerical tests with these two sets of coefficients can be seen in Figure \ref{num-test-1}. 
The relative residual norms  and the superlinear estimate 
$$
\dfrac{\|r_k\|_{S_{h, \Omega}}}{\|r_0\|_{S_{h, \Omega}}}
\qquad \mbox{and}
\qquad 
\prod_{j=1}^k \frac{1}{\sqrt{1+  \frac{2}{\rho_j^2}}},
$$
\resp \ (where the latter is the right side of (\ref{suplingen2}) with $K_0=2$) are denoted with a black solid line and a  red dashed line, \resp . We observe that the estimations closely follow the real convergence curves. 

Proving sharpness analytically for $K_0=2$ is the subject of future work at the moment. A partial theoretical  result where the value $K_0=2$ appears is given below for initial vectors restricted to a proper subspace. For this, recall from subsection~\ref{s:cgn_est1} that the $N \times N$ matrix
$S_{h,\Omega}^{-1}E_{h,\Omega}$ is $S_{h,\Omega}$-antisymmetric. Let $N = 2n$
be even and assume that $S_{h,\Omega}^{-1}E_{h,\Omega}$ is nonsingular. Then its
eigenvalues are $\pm i\tau_m$ $(m = 1,\dots,n)$ with
$\tau_1 \ge \tau_2 \ge \dots \ge \tau_n > 0$, hence the eigenvalues of
$S_{h,\Omega}^{-1}A_{h,\Omega}$ are $1 \pm i\tau_m$. Since
$S_{h,\Omega}^{-1}E_{h,\Omega}$ is $S_{h,\Omega}$-normal, its singular values
are the moduli of its eigenvalues, so
\[
\rho_{2m-1} = \rho_{2m} = \tau_m \qquad (m = 1,\dots,n).
\]
In particular, for $k = 2p$ the right-hand side of \eqref{suplingen2} with
$K_0 = 2$ equals
\[
\prod_{j=1}^{k} \frac{1}{\sqrt{1 + 2/\rho_j^2}}
= \prod_{j=1}^{k} \frac{\rho_j}{\sqrt{2 + \rho_j^2}}
= \prod_{m=1}^{p} \frac{\tau_m^2}{2 + \tau_m^2}.
\]

\begin{proposition}
\label{prop:K0is2}
Let $N = 2n$, let $S_{h,\Omega}^{-1}E_{h,\Omega}$ be nonsingular, and let
$k = 2p$ with $1 \le p \le n$. Assume that the $S_{h,\Omega}$-orthonormal
eigenvector expansion of $r_0$ has nonzero components only for eigenvalues
$1 \pm i\tau_\ell$ with $\tau_\ell \le \tau_p$. Then $r_k$ satisfies
\eqref{suplingen2} with $K_0 = 2$:
\[
\frac{\|r_k\|_{S_{h,\Omega}}}{\|r_0\|_{S_{h,\Omega}}}
\le \prod_{j=1}^{k} \frac{1}{\sqrt{1 + 2/\rho_j^2}} .
\]
\end{proposition}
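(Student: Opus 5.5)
The plan is to use the residual minimization property \eqref{basic-ineqeig}, but restricted to the eigencomponents actually present in $r_0$, and to insert one well-chosen polynomial. Since $S_{h,\Omega}^{-1}A_{h,\Omega}$ is $S_{h,\Omega}$-normal with an $S_{h,\Omega}$-orthonormal eigenbasis, expanding $r_0$ in this basis and applying \eqref{basic-ineq0} gives
\[
\|r_k\|_{S_{h,\Omega}} \le \min_{P_k\in\pi_k^1}\ \max_{\ell:\ \tau_\ell\le\tau_p}\ \max_{\pm}\ |P_k(1\pm i\tau_\ell)|\ \|r_0\|_{S_{h,\Omega}} .
\]
So it suffices to exhibit one $P\in\pi_{2p}^1$ whose modulus on the set $\{1\pm i t:\ 0< t\le\tau_p\}$ is at most $\prod_{m=1}^p \tau_m^2/(2+\tau_m^2)$, the value of the right-hand side computed just before the statement.

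For the polynomial, I will pair the eigenvalues $1\pm i\tau_m$ into real quadratic factors, rather than using the factor-of-2 construction $\hat P_k$ from subsection~\ref{s:cgn_est1}. Set
\[
P(\lambda)=\prod_{m=1}^{p} q_m(\lambda),\qquad q_m(\lambda)=\frac{(\lambda-1)^2+\tau_m^2}{1+\tau_m^2} .
\]
This is a polynomial of degree $2p$ with $P(0)=1$. At $\lambda=1\pm it$ each factor equals $(\tau_m^2-t^2)/(1+\tau_m^2)$, so for $0<t\le\tau_p\le\tau_m$ its modulus is at most $\tau_m^2/(1+\tau_m^2)$. This already gives a bound, but with denominators $1+\tau_m^2$ instead of $2+\tau_m^2$, which is not good enough.

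The main obstacle is therefore to improve the quadratic factor. The free parameter to exploit is the real shift $c\in\mathbb{R}$ in
\[
q_m(\lambda)=\frac{(\lambda-1)^2+c}{1+c} .
\]
On $\lambda=1\pm it$ this factor equals $(c-t^2)/(1+c)$, and we need to control it for $t^2\in[0,\tau_m^2]$ (this range covers all $\tau_\ell\le\tau_p\le\tau_m$). The largest modulus over that interval is attained at an endpoint, so it equals $\max(|c|,|c-\tau_m^2|)/(1+c)$. Equioscillation between the two endpoints suggests $c=\tau_m^2/2$, which gives $(\tau_m^2/2)/(1+\tau_m^2/2)=\tau_m^2/(2+\tau_m^2)$, exactly the target factor. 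I will verify that this choice is admissible: $c>0$ guarantees $1+c\neq0$, and $P(0)=\prod_m(1+c_m)/(1+c_m)=1$ holds automatically.

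Finally I multiply the factor bounds over $m=1,\dots,p$. This bounds $\|r_k\|_{S_{h,\Omega}}/\|r_0\|_{S_{h,\Omega}}$ by $\prod_{m=1}^p\tau_m^2/(2+\tau_m^2)$, and the identity displayed before the statement (which uses $\rho_{2m-1}=\rho_{2m}=\tau_m$) turns this into $\prod_{j=1}^k(1+2/\rho_j^2)^{-1/2}$. The hypothesis that $r_0$ has components only at $\tau_\ell\le\tau_p$ is exactly what allows taking the maximum over $t\le\tau_p$ instead of over the whole spectrum.
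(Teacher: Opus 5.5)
Your proposal is correct and is essentially the paper's proof. Your factor $\bigl((\lambda-1)^2+\tau_m^2/2\bigr)/(1+\tau_m^2/2)$ is exactly the paper's $(1-\lambda/z_m)(1-\lambda/\overline{z_m})$ with $z_m=1+i\tau_m/\sqrt2$; the restriction of the maximum to the eigencomponents present in $r_0$ and the bound $|\tau_m^2-2t^2|\le\tau_m^2$ are the same steps. The only difference is presentation: you find the shift $c=\tau_m^2/2$ by equioscillation, whereas the paper prescribes the roots directly.
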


\begin{proof*}
Let $z_m := 1 + i\tau_m/\sqrt2$ and define the polynomial
\[
\tilde P_k(\lambda) := \prod_{m=1}^{p}
\Big(1 - \frac{\lambda}{z_m}\Big)\Big(1 - \frac{\lambda}{\overline{z_m}}\Big),
\]
which has degree $k=2p$, real coefficients, and satisfies $\tilde P_k(0) = 1$.
Let $\Lambda$ be the set of eigenvalues $1 \pm i\tau_\ell$ with
$\tau_\ell \le \tau_p$. Using \eqref{basic-ineq0} and expanding $r_0$ in the
$S_{h,\Omega}$-orthonormal eigenbasis as in the derivation of
\eqref{basic-ineqeig}, the assumption on $r_0$ gives
\begin{equation}
\label{maxPlamd}
\frac{\|r_k\|_{S_{h,\Omega}}}{\|r_0\|_{S_{h,\Omega}}}
\le \frac{\|\tilde P_k(S_{h,\Omega}^{-1}A_{h,\Omega})\, r_0\|_{S_{h,\Omega}}}
         {\|r_0\|_{S_{h,\Omega}}}
\le \max_{\lambda \in \Lambda} |\tilde P_k(\lambda)| .
\end{equation}
Let $\lambda = 1 + i\sigma \in \Lambda$, so that $\sigma^2 \le \tau_p^2$. For
$m = 1,\dots,p$ we have $z_m - \lambda = i(\tau_m/\sqrt2 - \sigma)$ and
$\overline{z_m} - \lambda = -i(\tau_m/\sqrt2 + \sigma)$, hence
\[
\Big|\Big(1 - \frac{\lambda}{z_m}\Big)\Big(1 - \frac{\lambda}{\overline{z_m}}\Big)\Big|
= \frac{|\tau_m^2/2 - \sigma^2|}{1 + \tau_m^2/2}
= \frac{|\tau_m^2 - 2\sigma^2|}{2 + \tau_m^2}
\le \frac{\tau_m^2}{2 + \tau_m^2},
\]
where the last step uses $0 \le \sigma^2 \le \tau_p^2 \le \tau_m^2$, so that
$-\tau_m^2 \le \tau_m^2 - 2\sigma^2 \le \tau_m^2$. Taking the product over $m$
and using the identity above, we obtain
$|\tilde P_k(\lambda)| \le \prod_{j=1}^{k} \big(1 + 2/\rho_j^2\big)^{-1/2}$
for all $\lambda \in \Lambda$. Substituting this into \eqref{maxPlamd}
completes the proof.
\qed
\end{proof*}

\begin{figure}[h!]
	\centering
	\includegraphics[width=1\textwidth]{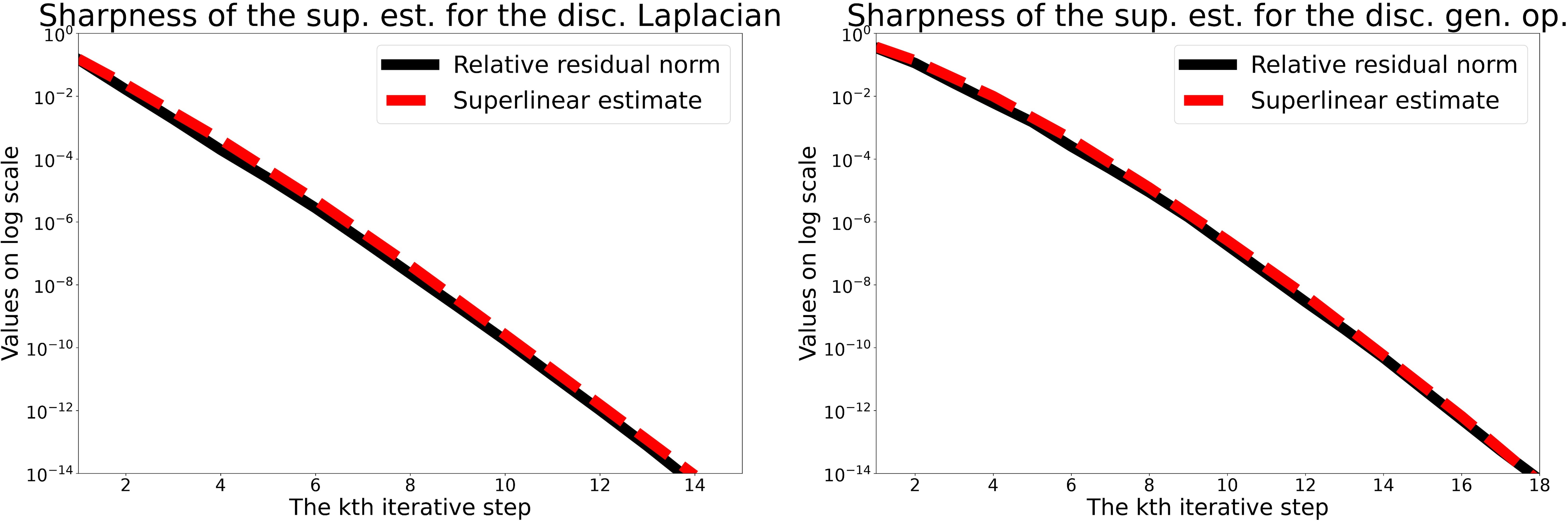}
	\caption{Numerical tests to support the claim that the superlinear estimate (\ref{suplingen2}) may be sharp for $K_0=2$.}
	\label{num-test-1}
\end{figure}

\subsubsection{Conclusion} 

In this section we have seen how the behaviour of the GMRES/GCR iteration is described by proper eigenvalues. In the previous three subsections, we studied the relevance of the three inequalities in  \eqref{ineq-chain1}--\eqref{ineq-chain3}  using analytical and numerical tools. 
We have observed that these inequalities are sharp in a proper sense either  in general or for suitable model problems. This indicates the well-founded expectation that  the qualitative properties of the sequence of final bounds
\begin{equation}
	\label{op-est}
	q_k(\Omega) :=
	\prod_{j=1}^k
	\frac{1}{ \sqrt{1 +  \, \tilde K \  \lambda_j({\Omega}) } }
	\qquad (k\in \mathbb{N}^+)
\end{equation}
in \eqref{ineq-chain3} are inherited also by the sequence of relative residual errors that occur in the iterative solution of the convection-diffusion problem.

In particular, we can gain an insight into the continuous dependence of the 
behaviour of the iteration on the domain $\Omega$ if we study the continuous dependence of the value of $q_k(\Omega)$ on  the geometry of $\Omega$. 
These investigations come in the next section. Therein one studies families of domains $\Omega$ contained in a fixed compact box domain $B$, hence the value  $\tilde K$ can be fixed uniformly. Altogether, we are led to study the continuous dependence of the eigenvalues $\lambda_j({\Omega})$  on the geometry of the domain $\Omega$.

\section{Continuous domain dependence of Laplace eigenvalues under geometric deformations} \label{s:continuity_geometry}

 In this section we study how the eigenvalues of the Dirichlet Laplacian depend on the underlying domain, and we extend the analysis to the uniformly elliptic operators in divergence
form introduced in \eqref{eq:nonsymG_op}. The study of the spectral dependence of elliptic differential operators on geometric perturbations of the underlying domain is a classical topic in mathematical analysis; see, e.g., \cite{henrot}, \cite{buttazzo}, \cite{mosco}. Our
aim here is to isolate sufficient geometric conditions for continuity and to deduce from them the continuity of the function $q_k(\Omega)$ defined in \eqref{op-est}.  Since our approach is based on abstract functional analytic techniques, the results apply to domains $\Omega$ in $n$ dimensions, for any $n \geq 2$, with the exception of those results given specifically for planar domains. 

\subsection{The continuity of eigenvalues}
Throughout this section we call a bounded set of the form 
\[B=(0,a_1)\times\dots\times(0,a_n)\subset \mathbb{R}^n, \qquad 0<a_1,\dots,a_n<\infty,\] 
an open box domain. We call its closure $\B$ a compact box domain. Given an open set $\Omega\subset \overline{B}$, we identify each $u$ in the Sobolev space $H_0^1(\Omega)$ with its extension by zero, which belongs to $H_0^1(B)$. In this way, the notions of distances and convergence in $L^2(B)$ make sense for functions defined on varying domains.

For an open set $\Omega \subset \B$, we  define the Dirichlet Laplace operator to be the operator $-\Delta$ associated to the bilinear form
\begin{equation}
    \label{eq:for}
    a_\Omega:H_0^1(\Omega)\times H_0^1(\Omega)\rightarrow \mathbb{R},\qquad a_\Omega(u,v)=\int_\Omega\nabla u\cdot\nabla v. 
\end{equation}
On smooth functions, the Laplace operator on $\R^n$ is simply 
$-\Delta =-\sum_{j=1} ^n \frac{\partial^2}{\partial x_j ^2}.$
We define the operator domain
\begin{equation}
    \mathcal{D}(\Omega) = \{ u \in H_0^1(\Omega) : -\Delta u \in L^2(\Omega) \}
    \label{op: domain}
\end{equation}  
where $-\Delta$ is understood in the distributional sense. The differential expression is the same for every $\Omega$ and what distinguishes one underlying geometric domain in $\R^n$ from another is the operator domain \eqref{op: domain}. This allows us to encode boundary conditions with no regularity assumptions on $\partial\Omega$. In this setting, $-\Delta:\mathcal{D}(\Omega)\subset H_0^1(\Omega)\subset L^2(\Omega)\rightarrow L^2(\Omega)$ is a self-adjoint, positive operator.
The compactness of the embedding $H_0^1(\Omega)\hookrightarrow L^2(\Omega)$ implies the compactness in $L^2(\Omega)$ of the resolvent of the Dirichlet Laplacian. We define the eigenvalue problem
\[-\Delta u=\lambda u, \qquad u\in\mathcal{D}(\Omega).\]
We say that $u\in H_0^1(\Omega)$, $u\neq 0$ is a weak eigenfunction if it is a solution of the variational problem
\[\int_\Omega\nabla u\cdot\nabla v =\lambda\int_\Omega uv, \qquad  \forall v\in H_0^1(\Omega).\]
By the spectral theorem for compact, self-adjoint operators applied to the resolvent, the spectrum of the Laplacian is therefore purely discrete and consists of a sequence of eigenvalues of finite multiplicity
\[0<\lambda_1(\Omega)\le\lambda_2(\Omega)\leq\dots\uparrow\infty,\]
repeated according to multiplicity, and $L^2(\Omega)$ admits an orthonormal
basis of eigenfunctions. Equivalently, by 
the Courant--Fischer min--max
principle\begin{equation}\label{eq:minmax}
\lambda_j(\Omega)=\min_{\substack{V\subset H^1_0(\Omega)\\ \dim V=j}}\
 \max_{u\in V\setminus\{0\}}\
\frac{\displaystyle\int_\Omega|\nabla u|^2}{\displaystyle\int_\Omega u^2}.
\end{equation}
Here, $V\subset H_0^1(\Omega)$ is a linear subspace. Formula \eqref{eq:minmax} depends on $\Omega$ only
through the space $H^1_0(\Omega)\subset H^1_0(B)$, immediately yielding the following property.
\begin{remark}[Domain monotonicity]
\label{rem:monotonicity}
If $\Omega'\subset\Omega\subset B$, then $H^1_0(\Omega')\subset H^1_0(\Omega)$ (extend by zero),
so the minimum in \eqref{eq:minmax} for $\Omega$ ranges over a larger family of subspaces than
for $\Omega'$, giving
\[
  \Omega'\subset\Omega \quad\Longrightarrow\quad \lambda_j(\Omega)\le\lambda_j(\Omega') \qquad \forall j\in\mathbb{N}.
\]
Thus a smaller domain has larger eigenvalues, and a larger domain has smaller ones.
\end{remark}

A first conceptual difficulty lies in the choice of an appropriate notion of domain convergence. Depending on the context, several topologies or notions of deformation may be considered: convergence in measure, Hausdorff convergence of the complements, deformations via diffeomorphisms, or Mosco convergence of the associated function spaces. A natural approach is provided by the notion of $\gamma$-convergence; see \cite{buttazzo}.

\begin{definition}[$\gamma$-convergence]
Let $\{\Omega_k\}_{k\in\mathbb{N}}$ be a sequence of open sets of $B$, and let $\Omega \subset B$ be an open set. We say that $\Omega_k$ \emph{$\gamma$-converges} to $\Omega$, and write
\[
\Omega_k \xrightarrow{\gamma} \Omega,
\]
if, for every $f \in L^2(B)$, the weak solution $u_k^f$ of
\[
\begin{cases}
-\Delta u = f, & \text{in } \Omega_k, \\
u = 0, & \text{on } \partial \Omega_k,
\end{cases}
\]
converges strongly in $L^2(B)$ to $u^f$, the weak solution of
\[
\begin{cases}
-\Delta u = f, & \text{in } \Omega, \\
u = 0, & \text{on } \partial \Omega.
\end{cases}
\]
Functions in $H_0^1(\Omega_k)$ are extended by zero outside $\Omega_k$ to elements in $H_0^1(B)$.
\label{defn1}
\end{definition}

\begin{theorem}[{\cite[Theorem~2.5]{henrot}}]
\label{prop1} 
If $\Omega_k \xrightarrow{\gamma} \Omega$, then, for all $j \in \mathbb{N}$,
$\lambda_j(\Omega_k) \to \lambda_j(\Omega)$.
\end{theorem}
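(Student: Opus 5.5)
The plan is to recast $\gamma$-convergence as norm convergence of resolvent operators on the fixed space $L^2(B)$, and then use standard perturbation theory for compact self-adjoint operators.

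\textbf{Step 1 (resolvents on a common space).} For each open $\Omega'\subset B$ let $R_{\Omega'}:L^2(B)\to L^2(B)$ map $f$ to the weak solution $u^f\in H^1_0(\Omega')\subset H^1_0(B)$ of $-\Delta u=f|_{\Omega'}$, extended by zero. This operator is self-adjoint and nonnegative, because $\langle R_{\Omega'}f,g\rangle=a_{\Omega'}(u^f,u^g)$. It is compact, since it factors through the compact embedding $H^1_0(B)\hookrightarrow L^2(B)$, with the bound $\|u^f\|_{H^1_0}\le C_B\|f\|_{L^2}$ given by the Poincaré inequality on $B$. Its nonzero eigenvalues are exactly $\mu_j(\Omega')=1/\lambda_j(\Omega')$, counted with multiplicity. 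The rest of $L^2(B)$, namely $L^2(B\setminus\Omega')$, lies in the kernel. Definition~\ref{defn1} says precisely that $R_{\Omega_k}\to R_\Omega$ strongly.

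\textbf{Step 2 (upgrade to norm convergence).} The operators $R_{\Omega_k}$ are collectively compact: they all map the unit ball of $L^2(B)$ into a fixed ball of $H^1_0(B)$, whose image is relatively compact in $L^2(B)$. Strong convergence of self-adjoint collectively compact operators implies norm convergence. To see this, write
\[
\|R_k-R\|=\sup_{\|f\|\le1}\|(R_k-R)f\|.
\]
Self-adjointness gives $\|R_k-R\|=\|(R_k-R)^*\|$. Because $\{R_kf:\|f\|\le1,\ k\}$ is precompact, and $(R_k-R)$ converges strongly to zero and is uniformly bounded, the convergence is uniform on compact sets. A standard argument (e.g. Anselone's theorem, or the estimate $\|(R_k-R)^2\|\to0$ followed by the $C^*$-identity $\|T\|^2=\|T^2\|$ for self-adjoint $T$) then gives $\|R_{\Omega_k}-R_\Omega\|\to0$.

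I expect this step to be the main obstacle. If the direct argument is delicate, I would instead prove the two needed eigenvalue inequalities separately:
\begin{itemize}
\item $\limsup\lambda_j(\Omega_k)\le\lambda_j(\Omega)$: approximate the eigenfunctions of $\Omega$ by $R_{\Omega_k}$ applied to $-\Delta$ of them, and use the min--max formula \eqref{eq:minmax} for the quotient $\langle R f,f\rangle/\|f\|^2$.
\item $\liminf\lambda_j(\Omega_k)\ge\lambda_j(\Omega)$: take normalized eigenfunctions on $\Omega_k$, which are bounded in $H^1_0(B)$, and extract $L^2$-convergent subsequences. Apply the strong convergence of $R_{\Omega_k}$ to identify each limit $u$ as satisfying $u=\lambda R_\Omega u$. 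Orthonormality is preserved in the limit by the strong $L^2$ convergence.
\end{itemize}

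\textbf{Step 3 (eigenvalue continuity).} Once norm convergence holds, the min--max characterization of eigenvalues of compact self-adjoint nonnegative operators gives
\[
|\mu_j(\Omega_k)-\mu_j(\Omega)|\le\|R_{\Omega_k}-R_\Omega\|\to0
\]
for every $j$. Since $\mu_j(\Omega)>0$ for each fixed $j$ (the domain $\Omega$ is nonempty open, so $H^1_0(\Omega)$ is infinite dimensional), inverting yields $\lambda_j(\Omega_k)\to\lambda_j(\Omega)$.
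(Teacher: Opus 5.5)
Your proof is correct and is essentially the standard argument behind the cited result. The paper only quotes that result, but it invokes the same resolvent mechanism again for Theorem~\ref{prop:eigG}. The argument runs as follows: $\gamma$-convergence is strong convergence $R_{\Omega_k}\to R_\Omega$ of the resolvents on $L^2(B)$; uniform compactness plus self-adjointness upgrades this to norm convergence; and the max--min (Weyl) bound $|\mu_j(\Omega_k)-\mu_j(\Omega)|\le\|R_{\Omega_k}-R_\Omega\|$ then gives eigenvalue convergence. Your upgrade step, via $\|(R_k-R)^2\|\le\|(R_k-R)R_k\|+\|(R_k-R)R\|\to0$ and $\|T\|^2=\|T^2\|$ for self-adjoint $T$, is valid as written, so the fallback route is unnecessary (its $\liminf$ part would in any case need the $\limsup$ bound first, to make the eigenfunctions bounded in $H^1_0(B)$).
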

Even if Definition \ref{defn1} together with Theorem \ref{prop1} provide a sufficient condition for convergence of eigenvalues, it may be difficult to apply, especially when dealing with sequences of domains defined by geometric transformations. For this reason, we will introduce simple geometric conditions that are more easily verifiable in practice and that ensure $\gamma$-convergence and, consequently, the continuity of the Dirichlet eigenvalues. 

\subsection{Geometric continuity of eigenvalues}
In more practical settings, the study of the continuous dependence of eigenvalues is often facilitated by using Hausdorff convergence. The strategy is the following: $\gamma$-convergence is the right notion for the stability of elliptic
boundary value problems, but it is defined through the solutions of those problems and
is therefore not a geometric condition. Hausdorff convergence, by contrast, is a purely geometric condition on the sets and is easy to check, but on its own it is
too weak. Accordingly, one seeks classes of domains, characterized by a geometric restriction, for which Hausdorff convergence implies \(\gamma\)-convergence.  In general some restrictions are unavoidable (see Example \ref{ball-pit-ex}).
\begin{definition}
    Let $(M,d)$ be a metric space. For each pair of compact, non-empty subsets $X,Y \subset M$, the \emph{Hausdorff distance} between $X$ and $Y$ is defined as
    \[\mathring{d}_H(X,Y) := \max \left\{ \sup_{x \in X} d(x,Y), \ \sup_{y \in Y} d(X,y) \right\},\] 
    where 
    \[d(x,Y) := \inf_{y \in Y} d(x,y), \quad d(X,y) := \inf_{x \in X} d(x,y).\]
\end{definition}
We now consider open subsets of a compact set and define their Hausdorff distance via their complements. In our setting, we consider $M=\R^n$ equipped with the euclidean distance $d(x,y)=|x-y|.$ 
\begin{definition}
Let $\Omega_1,\Omega_2\subset B$ be open sets. Their Hausdorff distance is defined by
$$
d_H(\Omega_1,\Omega_2) := \mathring{d}_H(\B \setminus \Omega_1, \B \setminus \Omega_2),
$$
where the right-hand side is the standard Hausdorff distance between the compact sets $\B \setminus \Omega_1$ and $\B \setminus \Omega_2$.  
\end{definition}
A classical result \cite{henrot} is the following.
\begin{theorem}[Convex case]
Let $\{\Omega_k\}_{k\in\mathbb{N}}$ be a sequence of convex open subsets of $\B$. If $\Omega_k$ converges to an open, convex set $\Omega$ with respect to the Hausdorff distance, then $\Omega_k$ $\gamma$-converges to $\Omega$.
\label{convex}
\end{theorem}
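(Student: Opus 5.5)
The plan is to prove $\gamma$-convergence through the standard Mosco-type route. Fix $f\in L^2(B)$ and let $u_k:=u_k^f\in H^1_0(\Omega_k)\subset H^1_0(B)$.

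\emph{Step 1: compactness.} Testing the equation with $u_k$ and using the Poincaré inequality on $B$ gives $\|\nabla u_k\|_{L^2}\le C\|f\|_{L^2}$, uniformly in $k$. Hence, along a subsequence, $u_k\rightharpoonup u^*$ weakly in $H^1_0(B)$ and strongly in $L^2(B)$ (Rellich). It then suffices to show that $u^*=u^f$. Uniqueness of the limit will then give convergence of the whole sequence.

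\emph{Step 2: the limit lies in $H^1_0(\Omega)$.} By convexity we may translate so that a ball $B(x_0,r)\subset\Omega$. Hausdorff convergence of the complements then gives $B(x_0,r/2)\subset\Omega_k$ for all large $k$. For $t>1$, consider the dilation $D_t(x)=x_0+t(x-x_0)$. A convex set containing $B(x_0,r/2)$ is star-shaped with respect to that ball, so $\overline{\Omega_k}$ is contained in the interior of $D_t(\Omega_k)$. Moreover $D_t(\Omega_k)\to D_t(\Omega)$ in the Hausdorff sense. Hence for large $k$ we have $\Omega_k\subset D_t(\Omega)$, because the convex sets are uniformly nondegenerate and Hausdorff convergence of complements controls $\sup_{x\in\Omega_k}d(x,\Omega)$. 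Therefore $u_k\in H^1_0(D_t(\Omega))$, a closed subspace, so $u^*\in H^1_0(D_t(\Omega))$ for every $t>1$. Since $\Omega$ is convex, $\bigcap_{t>1}H^1_0(D_t\Omega)=H^1_0(\Omega)$. To see this, a function vanishing a.e. outside $\overline\Omega$ lies in $H^1_0(\Omega)$ because convex domains are Lipschitz (stable under $H^1$), and the complement of $\overline\Omega$ differs from that of $\Omega$ by a null set.

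\emph{Step 3: passing to the limit in the equation.} Take any $\varphi\in C_c^\infty(\Omega)$. Its support is a compact subset of $\Omega$, and Hausdorff convergence of complements implies $\operatorname{supp}\varphi\subset\Omega_k$ for large $k$. Then $\int\nabla u_k\cdot\nabla\varphi=\int f\varphi$, and by weak convergence $\int\nabla u^*\cdot\nabla\varphi=\int f\varphi$. By density this holds for all $\varphi\in H^1_0(\Omega)$. Combined with Step 2, $u^*$ is the weak solution $u^f$.

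The main obstacle is Step 2, the "outer" inclusion, which is exactly where convexity (uniform interior-cone regularity) is needed. Without it, thin cracks or holes can vanish in the limit, as in Example~\ref{ball-pit-ex}. I would handle it via the dilation argument above, taking care with the degenerate case where the interior of $\Omega$ is empty. That case is excluded here since $\Omega$ is assumed to be an open convex set, presumably nonempty; if $\Omega=\emptyset$, one instead shows $u_k\to0$ via $\lambda_1(\Omega_k)\to\infty$, because the inradius tends to $0$.
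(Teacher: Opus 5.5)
Your proof is correct, but it is not the paper's route: the paper gives no proof of Theorem~\ref{convex} and simply quotes it from \cite{henrot}. Within the paper, convexity is treated as a special case of Lipschitz regularity (see part i) of Theorem~\ref{cor:eig}). So the natural argument there is a reduction to Chenais. Convex subsets of $\B$ that contain a common ball $B(x_0,r/2)$ satisfy a uniform cone condition with parameters depending only on $r$ and $\operatorname{diam}B$. They are therefore uniformly Lipschitz, and Theorem~\ref{chenais} applies.

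You instead verify both Mosco-type inclusions by hand. The inner one uses compact supports, which works for any complement-Hausdorff convergence. The outer one uses dilations about the centre of the common ball and closes with $\bigcap_{t>1}H^1_0(D_t\Omega)=H^1_0(\Omega)$. This gives a self-contained, elementary proof with no extension operators or cone machinery. It shows exactly where convexity enters, and it works verbatim for domains uniformly star-shaped with respect to a common ball. The reduction to Chenais is a one-liner given that machinery, and it places the convex case inside the uniformly Lipschitz framework used elsewhere in the paper (cf.\ Lemma~\ref{lem:unifLip}). However, it hides the mechanism.

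The crux, $\Omega_k\subset D_t(\Omega)$ for large $k$, should be written out, because the reason you give is not right as phrased. Complement-Hausdorff convergence alone does not control $\sup_{x\in\Omega_k}d(x,\Omega)$: adding to $\Omega$ a ball of radius $\varepsilon$ far away changes $d_H$ by at most $\varepsilon$. What makes the inclusion work is that the margin in $\overline{\Omega_k}\subset\operatorname{int}D_t(\Omega_k)$ is uniform in $k$.

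\begin{itemize}
\item Since $B(x_0,r/2)\subset\Omega_k$ and $\Omega_k$ is convex, the identity $x_0+\tfrac{y+v-x_0}{t}=\bigl(1-\tfrac1t\bigr)\bigl(x_0+\tfrac{v}{t-1}\bigr)+\tfrac{y}{t}$ gives $\operatorname{dist}\bigl(\overline{\Omega_k},\mathbb{R}^n\setminus D_t(\Omega_k)\bigr)\ge (t-1)r/2$.
\item Since $\overline{\Omega_k}\subset\B\subset D_t(\B)$, any $z\in\overline{\Omega_k}\setminus D_t(\Omega)$ equals $D_t(w)$ with $w\in\B\setminus\Omega$.
\item This $w$ lies within $d_H(\Omega_k,\Omega)$ of some $w'\in\B\setminus\Omega_k$. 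Hence $z$ lies within $t\,d_H(\Omega_k,\Omega)$ of $D_t(w')\notin D_t(\Omega_k)$.
\end{itemize}

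Thus $\overline{\Omega_k}\subset D_t(\Omega)$ as soon as $t\,d_H(\Omega_k,\Omega)<(t-1)r/2$. With this made explicit, and the case $\Omega=\emptyset$ handled as you indicate, the argument is complete.
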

The convexity assumption is, however, rather restrictive. A significant relaxation is obtained by assuming uniform Lipschitz regularity of the domains. Assume that the sets $\Omega_k$ are uniformly Lipschitz, 
meaning that the boundary of $\Omega_n$ is locally the graph of a Lipschitz function, and that we can take a uniform Lipschitz constant for all these Lipschitz functions and for all the sets $\Omega_n$. We refer to \cite{chenais, henrot} for the following result.   
\begin{theorem}[Chenais]
Let $\{\Omega_k\}_{k\in\mathbb{N}}$ be a sequence of uniformly Lipschitz open subsets of $\B$. 
If $\Omega_k$ converges to an open, Lipschitz set $\Omega$ with respect to the Hausdorff distance, then $\Omega_k$ $\gamma$-converges to $\Omega$.
\label{chenais}
\end{theorem}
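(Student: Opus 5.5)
To prove Theorem~\ref{chenais}, I will verify $\gamma$-convergence through Mosco convergence of the Sobolev spaces $H^1_0(\Omega_k)$ to $H^1_0(\Omega)$ inside $H^1_0(B)$. It is standard (see \cite{mosco,buttazzo}) that Mosco convergence is equivalent to $\gamma$-convergence for the Dirichlet problem. Mosco convergence requires two conditions:
\begin{enumerate}[label=(M\arabic*)]
\item every $u\in H^1_0(\Omega)$ is the strong $H^1$-limit of some $u_k\in H^1_0(\Omega_k)$;
\item if $u_{k_m}\in H^1_0(\Omega_{k_m})$ converge weakly in $H^1_0(B)$ to some $u$, then $u\in H^1_0(\Omega)$.
\end{enumerate}
Given these, the $\gamma$-convergence follows directly. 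Take the solutions $u_k^f$. They are bounded in $H^1_0(B)$ by the Poincaré inequality on $B$, so a subsequence converges weakly in $H^1_0(B)$ and strongly in $L^2(B)$ to some $u$. By (M2), $u\in H^1_0(\Omega)$. To identify $u$, fix $v\in H^1_0(\Omega)$ and use the recovery sequence $v_k\to v$ from (M1) as test functions in the equation for $u_k^f$. Passing to the limit shows that $u$ solves the Dirichlet problem on $\Omega$, so $u=u^f$, and uniqueness gives convergence of the whole sequence.

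For (M1), it suffices by density to treat $u\in C_c^\infty(\Omega)$. Hausdorff convergence of the complements does the work here: if $K=\operatorname{supp}u$ has distance $\delta>0$ from $\B\setminus\Omega$, then $d_H(\Omega_k,\Omega)<\delta/2$ forces $K\cap(\B\setminus\Omega_k)=\emptyset$. Hence $u\in H^1_0(\Omega_k)$ for $k$ large, and the constant sequence $u_k=u$ works. This step uses no regularity at all.

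The main obstacle is (M2), and it is where the uniform Lipschitz hypothesis must be used. I would proceed in three steps.
\begin{enumerate}[label=(\roman*)]
\item From $u_k\rightharpoonup u$ and $u_k=0$ a.e.\ outside $\Omega_k$, show that $u=0$ a.e.\ on $B\setminus\Omega$. For this I would use that uniformly Lipschitz domains satisfy a uniform cone condition, and that their Hausdorff limit $\Omega$ satisfies the same condition with the same constants. This rules out any loss of measure in the limit: $|\Omega_k\setminus\Omega|\to0$ and $|\Omega\setminus\Omega_k|\to0$, since $\chi_{\Omega_k}\to\chi_\Omega$ in $L^1$ under a uniform cone condition. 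Then $u\,\chi_{B\setminus\Omega}=\lim u_k\chi_{B\setminus\Omega}=0$.
\item Since $\Omega$ is Lipschitz, conclude that $u\in H^1(B)$ with $u=0$ a.e.\ outside $\Omega$ lies in $H^1_0(\Omega)$. This is the classical characterization of $H^1_0$ on Lipschitz (indeed segment-property) domains. It can be proved locally: in each boundary chart, translate $u$ slightly in the direction of the cone axis pointing into the domain, then mollify, and patch with a partition of unity.
\item Check the delicate point, namely that the uniform constants survive in the limit and that $\Omega$ is genuinely Lipschitz. This is part of the hypothesis here, which simplifies matters; the uniformity is needed only for the measure convergence in (i).
\end{enumerate}

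Finally, Theorem~\ref{prop1} converts $\gamma$-convergence into convergence of every eigenvalue $\lambda_j$.
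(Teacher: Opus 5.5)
Your proof is correct and follows the standard argument in the references the paper cites instead of giving its own proof (Chenais; Henrot's book). That argument proves Mosco convergence: (M1) comes from Hausdorff convergence alone, and (M2) comes from $\chi_{\Omega_k}\to\chi_\Omega$ in $L^1$ under the uniform cone condition, which you import from Chenais' compactness result as the source does, together with the characterization of $H^1_0$ on Lipschitz sets. One small precision in step (i): $u_k\chi_{B\setminus\Omega}=u_k\chi_{\Omega_k\setminus\Omega}$ is not zero, but it tends to zero in $L^2(B)$ because $u_k\to u$ strongly in $L^2(B)$ (Rellich) and $|\Omega_k\setminus\Omega|\to 0$.
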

In dimension two, the assumptions can be further relaxed thanks to a result by Šverák \cite{sverak1}; see also \cite{buttazzo}. In this setting, topological constraints on the domain are sufficient.
\begin{theorem}[Šverák]
Fix $n=2$. Let $\{\Omega_k\}_{k\in\mathbb{N}}$ be a sequence of open subsets of $\B$. Let $p \in \mathbb{N}$ be fixed and assume that the complements $\Omega_k^c$ in $B$ have at most $p$ connected components.  
If $\Omega_k$ converges to an open set $\Omega$ with respect to the Hausdorff distance, then $\Omega_k$ $\gamma$-converges to $\Omega$.
\label{sverak}
\end{theorem}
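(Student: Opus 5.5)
The plan is to verify the two conditions of Mosco convergence of $H^1_0(\Omega_k)$ to $H^1_0(\Omega)$ inside $H^1_0(B)$. It is classical (see \cite{buttazzo,henrot}) that Mosco convergence of these spaces is equivalent to $\gamma$-convergence in the sense of Definition~\ref{defn1}. The two conditions are:
\begin{enumerate}
\item[(M1)] every $u\in H^1_0(\Omega)$ is a strong $H^1$-limit of some $u_k\in H^1_0(\Omega_k)$;
\item[(M2)] if $u_{k_j}\in H^1_0(\Omega_{k_j})$ converges weakly in $H^1_0(B)$ to $u$, then $u\in H^1_0(\Omega)$.
\end{enumerate}

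For (M1), I will use Hausdorff convergence of the complements. Take $\varphi\in C_c^\infty(\Omega)$. Its support $K$ is at positive distance $\delta$ from $\B\setminus\Omega$. Once $d_H(\Omega_k,\Omega)<\delta/2$, every point of $\B\setminus\Omega_k$ lies within $\delta/2$ of $\B\setminus\Omega$, so $K\subset\Omega_k$ and $\varphi\in H^1_0(\Omega_k)$. A density and diagonal argument then gives (M1) for general $u\in H^1_0(\Omega)$. This step uses no topological hypothesis.

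Condition (M2) is the heart of the matter and is where the bound on the number of components of $\Omega_k^c$ enters. Let $u$ be such a weak limit; by Rellich it is also a strong $L^2$ limit. The complements $\B\setminus\Omega_k$ converge in Hausdorff distance to $\B\setminus\Omega$. Each of them has at most $p$ connected components, so after a subsequence the components themselves converge to compact connected sets $K^1,\dots,K^p$ whose union is $\B\setminus\Omega$ (Blaschke selection). I will split into two cases.

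\begin{enumerate}
\item[(a)] \textbf{A limiting component is a single point.} In dimension two a point has zero capacity, so it imposes no condition in $H^1_0$ and can be discarded.
\item[(b)] \textbf{A limiting component is a continuum of positive diameter.} The key planar fact I will use is a uniform capacity density estimate: for a connected compact $K\subset\R^2$ with $\operatorname{diam}K\ge r$ and any $x\in K$, $\operatorname{cap}(K\cap B(x,r),B(x,2r))\ge c>0$ with $c$ independent of $K$. This holds because in two dimensions the capacity of a segment-like continuum of diameter $r$ in $B(x,2r)$ is bounded below by scale invariance. I then apply this to the approximating components $K_k^i$, which have diameter bounded below for large $k$. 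A Hardy/Poincaré-type inequality derived from the capacity density, namely $\int |u_k|^2/d(\cdot,K^i_k)^2\lesssim\int|\nabla u_k|^2$ locally, passes to the limit by Fatou. This shows $u$ vanishes quasi-everywhere on $K^i$.
\end{enumerate}

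Since $u$ vanishes quasi-everywhere on $\B\setminus\Omega$, and a function in $H^1_0(B)$ that vanishes q.e. off $\Omega$ belongs to $H^1_0(\Omega)$ (Hedberg's theorem / the fine characterization), this gives $u\in H^1_0(\Omega)$.

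The main obstacle is step (b): establishing the uniform capacity lower bound near continua and turning it into a quasi-everywhere vanishing statement stable under weak limits. I would first try the cleanest route. For $x\in K^i$ and small $r$, the averages of $u_k$ over $B(x,r)$ are controlled by $\|\nabla u_k\|_{L^2(B(x,2r))}$ via the capacity-Poincaré inequality. Passing to the limit gives $\lim_{r\to0} \frac{1}{|B(x,r)|}\int_{B(x,r)}|u|=0$ for q.e. $x\in K^i$, which identifies the quasicontinuous representative as zero there.
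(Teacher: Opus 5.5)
The paper does not prove this theorem; it is quoted from \v{S}ver\'ak's paper and the Bucur--Buttazzo monograph. Your outline is the standard proof: reduce to Mosco convergence, get (M1) from Hausdorff convergence of the complements alone, and get (M2) from a uniform capacity-density lemma for planar continua, discarding limit components that are points because they are polar. The architecture is sound, but two steps need repair.

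First, scale invariance is not why the capacity lemma holds. It only shows that the capacity of one fixed configuration does not depend on $r$, whereas you need a lower bound uniform over all continua. The real mechanism is this: a continuum $K\ni x$ with $\operatorname{diam}K\ge r$ contains a point at distance at least $r/2$ from $x$. Hence the component of $K\cap\overline{B(x,r/2)}$ through $x$ meets every circle $\partial B(x,s)$, $0<s<r/2$. Circular symmetrization, which does not increase condenser capacity, then reduces the problem to a radial segment of length $r/2$. (Alternatively, an elementary argument combining the one-dimensional Sobolev inequality on these circles with radial averages does the same.) Only at that point does scaling give a universal constant.

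Second, in your ``cleanest route'' you cannot let $k\to\infty$ in $\|\nabla u_k\|_{L^2(B(x,2r))}$. Weak convergence gives only $\int|\nabla u|^2\le\liminf_k\int|\nabla u_k|^2$, which is the wrong direction. Moreover, the energy of $u_k\in H^1_0(\Omega_k)$ can concentrate at points of $K^i$, for instance through normalized bumps placed next to $K^i_k$ and shrinking to $x$. What the argument actually yields is $|B(x,r)|^{-1}\int_{B(x,r)}u^2\le C\,\nu(\overline{B(x,5r)})$, where $\nu$ is a weak-$*$ limit of $|\nabla u_k|^2\,dx$. As $r\to0$ this tends to $C\nu(\{x\})$, which vanishes off the countable set of atoms of $\nu$, and that set is polar in the plane. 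So your q.e.\ conclusion is correct, but only through this extra step.

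Your Hardy--Fatou route avoids the concentration issue, since only $\sup_k\|\nabla u_k\|_{L^2}$ enters. However, it relies on a deep theorem (uniform fatness implies a Hardy inequality, with constants uniform in $k$), and you can bypass it:
\begin{enumerate}
\item[(i)] The Maz'ya inequality $\int_{B(y,\rho)}v^2\le C\rho^2\operatorname{cap}(E,B(y,2\rho))^{-1}\int_{B(y,2\rho)}|\nabla v|^2$ holds for $v=0$ q.e.\ on $E\subset\overline{B(y,\rho)}$.
\item[(ii)] Summing it over an $\varepsilon$-net of $K^i_k$ gives $\int_{\{d(\cdot,K^i_k)<\varepsilon\}}u_k^2\le C\varepsilon^2$, uniformly in large $k$ and small $\varepsilon$.
\item[(iii)] This passes to the limit by strong $L^2$ convergence, yielding the same bound for $u$ near each $K^i$.
\item[(iv)] Let $K$ be the union of the nondegenerate $K^i$, and let $\psi$ be Lipschitz with $\psi=0$ on $[0,\frac14]$ and $\psi=1$ on $[\frac12,\infty)$. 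Then the functions $u\,\psi(d(\cdot,K)/\varepsilon)$ are bounded in $H^1$, vanish near $K$, and tend to $u$ in $L^2$.
\item[(v)] Weak closedness of $H^1_0(B\setminus K)$ gives $u\in H^1_0(B\setminus K)=H^1_0(\Omega)$. The equality holds because $\Omega$ is $B\setminus K$ minus finitely many points.
\end{enumerate}
This route needs neither a Hardy inequality nor Hedberg's theorem.
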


Finally, the following proposition deals with the
deformations used in the next section. It shows that if a sequence of domains
$\Omega_k$ is obtained from $\Omega$ through maps $\mathrm I+G_k$ converging to
the identity in the $C^1$ norm, then $\Omega_k$ converges to $\Omega$ in the
Hausdorff topology. The rate of convergence is explicit, given by
$\|G_k\|_\infty$.

\begin{theorem}\label{prop29}
Let $B\subset\mathbb{R}^n$ be a bounded open set and let $\Omega$,
$\{\Omega_k\}_{k\in\mathbb{N}}$ be open subsets compactly contained in $B$. Assume that for every
$k$ there exists $G_k\in C^1(\overline B;\mathbb{R}^n)$ such that
\begin{equation}\label{eq:Fk}
  F_k:=(\mathrm I+G_k)\big|_{\overline\Omega}\colon
  \overline\Omega\longrightarrow\overline{\Omega_k}
  \quad\text{is a diffeomorphism mapping }\Omega\text{ onto }\Omega_k .
\end{equation}
Here $\mathrm I$ denotes the identity map and $\overline\Omega$ the closure of
$\Omega$. For $x\in\overline B$ denote by $G_k'(x)$ the Jacobian matrix of
$G_k$ at $x$ and set
\begin{equation}\label{Gknorms}
  \|G_k'(x)\|:=\sup_{|v|=1}|G_k'(x)v| ,\qquad
  \|G_k'\|_{\infty}:=\max_{x\in\overline B}\|G_k'(x)\| ,\qquad
  \|G_k\|_{\infty}:=\max_{x\in\overline B}|G_k(x)| .
\end{equation}
If
\[
\|G_k\|_{C^1} := \|G_k\|_{\infty}+
  \|G_k'\|_{\infty}\longrightarrow0
  \qquad\text{as }k\to\infty,
\]
then $\Omega_k\to\Omega$ in the Hausdorff topology.
\end{theorem}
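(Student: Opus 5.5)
We have to show that $\mathring d_H(\B\setminus\Omega_k,\B\setminus\Omega)\le C\|G_k\|_\infty$ for all large $k$; here $\B$ is the closure of $B$ and $\Omega,\Omega_k\Subset B$. Write $K:=\B\setminus\Omega$ and $K_k:=\B\setminus\Omega_k$. Both are compact and nonempty, since each contains $\partial B$. Let $\delta_k:=\|G_k\|_\infty$. We bound the two one-sided distances separately. The key idea is that points of a complement that are far from $\partial B$ are close to the boundary of the domain, and $F_k$ moves points of $\overline\Omega$ by at most $\delta_k$.

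\emph{Step 1: $\sup_{y\in K_k} d(y,K)\le\delta_k$.}
Let $y\in K_k$. If $y\notin\Omega$, then $y\in K$ and $d(y,K)=0$. Otherwise $y\in\Omega\setminus\Omega_k$. Consider the point $x_t:=y+t(x-y)$ and the segment idea; more cleanly, use the map $F_k$ directly. Since $F_k(\Omega)=\Omega_k$ and $y\notin\Omega_k$, the point $y$ is not in $F_k(\Omega)$. Suppose $B(y,\delta_k)\subset\Omega$. The map $\Phi(x):=x+G_k(x)$ is continuous on the closed ball $\overline{B(y,r)}$ for $r$ slightly bigger than $\delta_k$ (if that ball still lies in $\Omega$), and it satisfies $|\Phi(x)-x|\le\delta_k<r$. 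A standard degree/Brouwer argument (the homotopy $x+sG_k(x)$ never takes the value $y$ on the sphere $|x-y|=r$) gives $\deg(\Phi,B(y,r),y)=1$. Hence $y\in\Phi(B(y,r))\subset F_k(\Omega)=\Omega_k$, a contradiction. Therefore $d(y,\B\setminus\Omega)\le\delta_k$; more precisely, every ball around $y$ of radius greater than $\delta_k$ meets $K$, so $d(y,K)\le\delta_k$.

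\emph{Step 2: $\sup_{x\in K} d(x,K_k)\le\delta_k$.}
Let $x\in K$. If $x\notin\Omega_k$, we are done. Otherwise $x\in\Omega_k=F_k(\Omega)$, so $x=F_k(z)$ for some $z\in\Omega$. Since $x\notin\Omega$, the segment from $z$ to $x$ leaves $\Omega$ and so meets $\partial\Omega$; but that point need not be close to $K_k$. Instead, use the diffeomorphism property: $F_k$ maps $\overline\Omega$ onto $\overline{\Omega_k}$, and since it is a homeomorphism of $\overline\Omega$ onto $\overline{\Omega_k}$ carrying $\Omega$ onto $\Omega_k$, it maps $\partial\Omega$ onto $\partial\Omega_k$. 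So it suffices to show $d(x,\partial\Omega_k)\le\delta_k$. Pick $w\in\partial\Omega$ on the segment $[z,x]$. Then $F_k(w)\in\partial\Omega_k\subset K_k$, and
\[
|F_k(w)-x|\le |w-x|+\delta_k .
\]
This is not yet small, so the argument is refined symmetrically to Step 1. Apply the Step 1 argument with the roles reversed, using the inverse map $F_k^{-1}=\mathrm I+H_k$ on $\overline{\Omega_k}$, where $|H_k(F_k(z))|=|G_k(z)|\le\delta_k$. The degree argument of Step 1 needs the displacement to be defined on a ball, whereas $H_k$ is defined only on $\overline{\Omega_k}$. I would handle this as follows. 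Assume $B(x,r)\subset\Omega_k$ with $r>\delta_k$. The continuous map $\mathrm I+H_k$ on $\overline{B(x,r)}$ moves points by less than $r$, so by the same degree argument its image contains $x$, which gives $x\in F_k^{-1}(\Omega_k)=\Omega$, a contradiction. Hence $d(x,K_k)\le\delta_k$.

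Combining the two steps yields $d_H(\Omega_k,\Omega)\le\|G_k\|_\infty\to0$. The main subtle point is the degree/Brouwer step: we must check that the homotopy $x\mapsto x+sG_k(x)$ avoids $y$ on the boundary sphere, and that the restriction to balls lying inside the domain is legitimate. The $C^1$ and derivative bounds $\|G_k'\|_\infty\to0$ are not needed for the Hausdorff statement itself. They only ensure, for large $k$, that $\mathrm I+G_k$ is injective and that the diffeomorphism hypothesis is consistent.
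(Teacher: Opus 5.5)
Your proof is correct, but it takes a different route from the paper.

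\textbf{The paper's route.} The paper bounds both one-sided distances using only connectedness of segments and the boundary correspondence $F_k[\partial\Omega]=\partial\Omega_k$. For $z\in\Omega\setminus\Omega_k$, the segment $[F_k(z),z]$ crosses $\partial\Omega_k$ at some $y=F_k(\tilde x)$ with $\tilde x\in\partial\Omega$. For $z=F_k(x)\in\Omega_k\setminus\Omega$, the segment $[x,z]$ crosses $\partial\Omega$ at some $y$ with $F_k(y)\in\partial\Omega_k$. In each case two displacements of size at most $\|G_k\|_\infty$ are added, so $d_H(\Omega_k,\Omega)\le 2\|G_k\|_\infty$.

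\textbf{Your abandoned computation already works.} The estimate you dropped in Step 2 is exactly the paper's second half. Since $w$ lies on $[z,x]$, you have $|w-x|\le|z-x|=|G_k(z)|\le\delta_k$. Hence $|F_k(w)-x|\le 2\delta_k$, which is already small enough.

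\textbf{Your replacement argument.} You apply a degree argument to $\mathrm I+G_k$ on balls $\overline{B(y,r)}\subset\Omega$, and to $F_k^{-1}=\mathrm I+H_k$ on balls $\overline{B(x,r)}\subset\overline{\Omega_k}$, with $r>\delta_k$. Brouwer alone also suffices, since $x\mapsto y-G_k(x)$ maps $\overline{B(y,r)}$ into itself. The argument is valid: on the sphere, the homotopy $x\mapsto x+sG_k(x)$ stays at distance at least $r-\delta_k>0$ from the centre.

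\textbf{Comparison.} Your approach gives the sharp constant $d_H(\Omega_k,\Omega)\le\|G_k\|_\infty$, and it never uses $F_k[\partial\Omega]=\partial\Omega_k$. The cost is a fixed-point theorem where the paper needs only the intermediate value property on a segment. Your observation that only $\|G_k\|_\infty\to0$ is used agrees with the paper's proof.

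\textbf{Points to tidy up.}
\begin{enumerate}
\item State the contradiction hypothesis as ``$\overline{B(y,r)}\subset\Omega$ for some $r>\delta_k$'', not as $B(y,\delta_k)\subset\Omega$.
\item Make the final step explicit. If no such ball fits, a segment from $y\in\Omega$ to a nearby point outside $\Omega$ meets $\partial\Omega\subset\overline B\setminus\Omega$, which gives $d(y,K)\le\delta_k$. Argue likewise with $\partial\Omega_k\subset\overline B\setminus\Omega_k$ in Step 2.
\end{enumerate}
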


\begin{proof*}
We set $\varepsilon_k:=\|G_k\|_\infty$. Then
\begin{equation}\label{eq:close}
  |F_k(x)-x|=|G_k(x)|\le\varepsilon_k
  \qquad\text{for every }x\in\overline\Omega .
\end{equation}
Since $F_k$ is a bijection of $\overline\Omega$ onto $\overline{\Omega}_k$
and of $\Omega$ onto $\Omega_k$, and since $\Omega$ and $\Omega_k$ are open,
\begin{equation}\label{eq:bdry}
  F_k[\partial\Omega]
  =F_k[\overline\Omega\setminus\Omega]
  =\overline{\Omega}_k\setminus\Omega_k
  =\partial\Omega_k .
\end{equation}
Let $z\in\overline B\setminus\Omega_k$. We want to show that
$\operatorname{dist}(z,\overline B\setminus\Omega)\le2\varepsilon_k$. If
$z\notin\Omega$, then $z\in\overline B\setminus\Omega$ and there is nothing
to prove. Otherwise $z\in\Omega$, so $F_k(z)\in\Omega_k$ while
$z\notin\Omega_k$. Hence the segment $[F_k(z),z]$ meets $\partial\Omega_k$ at
some point $y$, and by \eqref{eq:close}
\[
  |z-y|\le|z-F_k(z)|\le\varepsilon_k .
\]
By \eqref{eq:bdry}, $y=F_k(\tilde x)$ for some $\tilde x\in\partial\Omega$,
and a second application of \eqref{eq:close} gives
$|y-\tilde x|\le\varepsilon_k$. Since
$\partial\Omega\subset\overline B\setminus\Omega$, we conclude that
\[
  \operatorname{dist}\bigl(z,\overline B\setminus\Omega\bigr)
  \le|z-\tilde x|\le|z-y|+|y-\tilde x|\le2\varepsilon_k .
\]
Now let $z\in\overline B\setminus\Omega$. We show that
$\operatorname{dist}(z,\overline B\setminus\Omega_k)\le2\varepsilon_k$. If
$z\notin\Omega_k$, there is nothing to prove. Otherwise $z\in\Omega_k$, so
$z=F_k(x)$ for some $x\in\Omega$, and $|z-x|\le\varepsilon_k$ by
\eqref{eq:close}. Since $x\in\Omega$ and $z\notin\Omega$, the segment $[x,z]$
meets $\partial\Omega$ at some point $y$ with
$|z-y|\le|z-x|\le\varepsilon_k$. By \eqref{eq:bdry},
$F_k(y)\in\partial\Omega_k\subset\overline B\setminus\Omega_k$, and
$|F_k(y)-y|\le\varepsilon_k$ by \eqref{eq:close}. Therefore
\[
  \operatorname{dist}\bigl(z,\overline B\setminus\Omega_k\bigr)
  \le|z-F_k(y)|\le|z-y|+|y-F_k(y)|\le2\varepsilon_k .
\]
Combining the two estimates we obtain
$d_H(\Omega_k,\Omega)
\le2\varepsilon_k\to0$.
\qed
\end{proof*}
In order to combine Theorem~\ref{prop29} with the continuity results
above, we have to show that uniform Lipschitz regularity is preserved by
small $C^1$ deformations. This is most easily done through the following
equivalent characterization of uniformly Lipschitz domains.
For $y\in\mathbb{R}^n$, a unit vector $\xi$, $\theta\in(0,\pi/2)$ and $h>0$ let
\[
  C(y,\xi,\theta,h):=\bigl\{z\in\mathbb{R}^n:\ 0<|z-y|<h,\
  (z-y)\cdot\xi>|z-y|\cos\theta\bigr\}.
\]
An open set $\Omega$ has the uniform cone property with parameters
$(r,\theta,h)$ if for every $x\in\partial \Omega$ there is a unit vector $\xi_x$
such that $C(y,\xi_x,\theta,h)\subset \Omega$ for every
$y\in\overline \Omega\cap B(x,r)$. A bounded open set has Lipschitz boundary if and
only if it has the uniform cone property for some parameters, and a family of
open sets sharing the same parameters is uniformly Lipschitz
\cite{chenais,HenrotPierre}.
\begin{lemma}\label{lem:unifLip}
Under the assumptions of Theorem~\ref{prop29}, assume that $\Omega$ has
Lipschitz boundary. Then there exists $k_0$ such that the sets
$\{\Omega_k\}_{k\ge k_0}$ are uniformly Lipschitz.
\end{lemma}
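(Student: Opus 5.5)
We work with the uniform cone property and show that $\Omega_k=F_k(\Omega)$ has the uniform cone property with parameters $(r',\theta',h')$ that do not depend on $k$ once $k\ge k_0$. By the equivalence quoted before the lemma, this gives uniform Lipschitz regularity. Fix the parameters $(r,\theta,h)$ of $\Omega$ and the directions $\xi_x$, $x\in\partial\Omega$. Put $\delta_k:=\|G_k'\|_\infty\to0$. Choose $k_0$ so large that $\delta_k\le\delta_0$ and $\|G_k\|_\infty\le\varepsilon_0$ for $k\ge k_0$, where $\delta_0,\varepsilon_0$ are small numbers fixed in terms of $(r,\theta,h)$ only.

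\textbf{Step 1: near-isometry of $F_k$.} We use the integral form of the mean value theorem along segments. This is legitimate because $G_k$ is $C^1$ on the box $\overline B$, which is convex. It gives
\[
\bigl|F_k(a)-F_k(b)-(a-b)\bigr|\le\delta_k|a-b|\qquad(a,b\in\overline B).
\]
Hence $(1-\delta_k)|a-b|\le|F_k(a)-F_k(b)|\le(1+\delta_k)|a-b|$. The angle between $F_k(a)-F_k(b)$ and $a-b$ is at most $\arcsin\delta_k$. Since $F_k$ is a bijection $\overline\Omega\to\overline{\Omega_k}$, the same bounds hold for $F_k^{-1}$ between points of $\overline{\Omega_k}$, up to a factor $(1-\delta_k)^{-1}$.

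\textbf{Step 2: transporting cones.} Let $x'\in\partial\Omega_k$. By \eqref{eq:bdry}, $x'=F_k(x)$ for some $x\in\partial\Omega$, and we set $\xi'_{x'}:=\xi_x$. Take $y'\in\overline{\Omega_k}\cap B(x',r/2)$ and write $y'=F_k(y)$ with $y\in\overline\Omega$. By Step 1, $|y-x|\le 2|y'-x'|<r$ once $\delta_k\le1/2$, so $C(y,\xi_x,\theta,h)\subset\Omega$.

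We claim $C(y',\xi_x,\theta/2,h/2)\subset\Omega_k$. Let $z'$ lie in this smaller cone. The affine map $w\mapsto y+(w-y')$ sends $z'$ into $C(y,\xi_x,\theta/2,h/2)$, deep inside the larger cone. A degree-type or continuation argument should then produce $z\in C(y,\xi_x,\theta,h)$ with $F_k(z)=z'$. For instance, follow the segment from $y'$ to $z'$ and lift it through $F_k$. Starting at $y$, the direction of the lifted path deviates from $\xi_x$ by at most $\theta/2+O(\delta_k)<\theta$, so it stays inside the open cone, where $F_k$ is a local diffeomorphism onto part of $\Omega_k$. Its length is at most $(1-\delta_k)^{-1}|z'-y'|<h$.

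\textbf{Main obstacle.} The delicate point is making this lifting rigorous. Two things must be checked. First, $F_k$ is only given as a diffeomorphism on $\overline\Omega$, so the lift must remain in $\Omega$; the cone condition for $\Omega$ guarantees exactly this. Second, the lift must not escape before reaching $z'$. I would handle this by a maximal-interval argument: the set of $t\in[0,1]$ up to which the lift exists inside $C(y,\xi_x,\theta,h)$ is open, by the inverse function theorem. It is also closed, because the angle and length estimates from Step 1 keep the lift in a compact sub-cone bounded away from the lateral boundary.

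A secondary subtlety is the vertex, where $y$ may lie on $\partial\Omega$. There I would begin the lift at a point slightly inside the cone and pass to the limit. This gives the uniform parameters $(r/2,\theta/2,h/2)$ for all $k\ge k_0$, which proves the lemma.
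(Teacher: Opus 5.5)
Your proposal is sound. It shares the paper's architecture. You use the uniform cone property and transport the direction $\xi_x$ from $x=F_k^{-1}(x')\in\partial\Omega$. You reduce everything to the inclusion $C(F_k(y),\xi_x,\theta/2,h/2)\subset\Omega_k$ whenever $C(y,\xi_x,\theta,h)\subset\Omega$, and you get the same $k$-independent parameters $(r/2,\theta/2,h/2)$.

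The real difference is how that inclusion is proved. The paper uses a one-shot contraction argument:
\begin{itemize}
\item For $w$ in the small cone it sets $x_0:=w-G_k(y)$ and $d:=|w-F_k(y)|$.
\item Since $x_0-y=w-F_k(y)$ makes an angle at most $\theta/2$ with $\xi$, the closed ball $D=\overline{B(x_0,sd)}$ with $s=\frac12\sin(\theta/2)$ lies in $C$.
\item The mean value inequality on the convex set $C\cup\{y\}\subset\overline\Omega$ shows that $\Phi(x)=w-G_k(x)$ maps $D$ into itself and is a contraction once $\delta_k\le s/(1+s)$.
\item Banach's theorem then gives $x\in D\subset\Omega$ with $F_k(x)=w$.
\end{itemize}

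Your lifting of $t\mapsto y'+t(z'-y')$ also works. Openness comes from the inverse function theorem. For closedness, the lift $\tilde\gamma$ has speed at most $|z'-y'|/(1-\delta_k)$, so it has a limit at any $t^*>0$. Your angle and length bounds, together with $|\tilde\gamma(t)-y|\ge t|z'-y'|/(1+\delta_k)$, keep that limit in a compact subset of $C$.

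It just costs more bookkeeping. The contraction argument is shorter and purely quantitative. It needs no local inversion and no special treatment of the vertex, because the fixed point is sought in a ball at distance at least $(1-s)d$ from $y$.

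Your vertex concern is unnecessary in any case. We have $G_k\in C^1(\overline B)$ with $\|G_k'\|_\infty<1$, and $y\in\overline\Omega\subset B$. So $F_k$ is a local diffeomorphism at $y$ itself, and the lift can start there with no limiting procedure.

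One point needs repair. Theorem~\ref{prop29} assumes only that $B$ is a bounded open set, not a box. So the global estimate $|F_k(a)-F_k(b)-(a-b)|\le\delta_k|a-b|$ on $\overline B$ in your Step~1 is not available in general, and neither is your bound $|y-x|\le 2|y'-x'|$. Use instead $|y-x|\le|y-y'|+|y'-x'|+|x'-x|<2\|G_k\|_\infty+r/2\le r$ for $\|G_k\|_\infty\le r/4$. This is exactly what your $\varepsilon_0$ is for, and it is what the paper does. Apply derivative bounds only on segments inside the convex set $C\cup\{y\}$, or to the tangent vectors of the lift.
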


\begin{proof*}
Since $\Omega$ has Lipschitz boundary, it has the uniform cone property with
some parameters $(r,\theta,h)$.
Set $\varepsilon_k:=\|G_k\|_\infty$, $\delta_k:=\|G_k'\|_\infty$,
$s:=\tfrac12\sin(\theta/2)$, and choose $k_0$ such that \[
  \varepsilon_k\le\frac r4
  \qquad\text{and}\qquad
  \delta_k\le\frac{s}{1+s}<1
  \qquad\text{for every }k\ge k_0 .
\]
From now on we fix $k\ge k_0$. Let $y\in\overline\Omega$ and $\xi$ be such that
$C:=C(y,\xi,\theta,h)\subset\Omega$. 

In the first step of the proof we claim that
\[C(F_k(y),\xi,\theta/2,h/2)\subset\Omega_k.\] To prove the claim, let $w$ belong to the latter cone. We have to find $x\in\Omega$ with $F_k(x)=w$, that is, a
fixed point of
\[
  \Phi(x):=w-G_k(x).
\]
Set $x_0:=w-G_k(y)$ and $d:=|x_0-y|=|w-F_k(y)|$. Since $w$ belongs to the latter cone, $|w-F_k(y)|\in(0,h/2)$, moreover the vector $x_0-y$
forms an angle of at most $\theta/2$ with $\xi$, so the distance from $x_0$ to
$\mathbb{R}^n\setminus C$ is at least $d\sin(\theta/2)$. It follows from our choice of $s$ that the closed ball
$D:=\overline{B(x_0,sd)}$ is contained in $C$. Since $C$ is convex and
$C\cup\{y\}\subset\overline\Omega$, the mean value inequality gives, for
$x\in D$,
\[
  |(w-G_k(x))-x_0|=|G_k(y)-G_k(x)|\le\delta_k|x-y|
  \le\delta_k(1+s)d\le sd .
\]
Thus $\Phi$ maps $D$ into itself and since its Lipschitz constant is at most $\delta_k<1$, it is a contraction there. By the Banach fixed point theorem, there exists a unique fixed point $x\in D$ and that point satisfies $x\in\Omega$ and $F_k(x)=w$, so $w\in\Omega_k$.

Next, we show the uniform cone condition for $\Omega_k$ with $k\ge k_0$. Let $x'\in\partial\Omega_k$ and $y'\in\overline{\Omega}_k$
with $|y'-x'|<r/2$. By \eqref{eq:Fk} and \eqref{eq:bdry},
$x:=F_k^{-1}(x')\in\partial\Omega$ and $y:=F_k^{-1}(y')\in\overline\Omega$,
and by \eqref{eq:close}
\[
  |y-x|\le|y-y'|+|y'-x'|+|x'-x|<2\varepsilon_k+r/2\le r .
\]
Therefore $C(y,\xi_x,\theta,h)\subset\Omega$, and the first step yields
$C(y',\xi_x,\theta/2,h/2)\subset\Omega_k$. Hence every $\Omega_k$ with
$k\ge k_0$ has the uniform cone property with parameters
$(r/2,\theta/2,h/2)$, independent of $k$.
\qed
\end{proof*}

Finally, we show a similar result for $\Omega$ being a Jordan domain, i.e., a domain whose boundary $\partial\Omega$ is a Jordan curve. Recall that a Jordan curve is the image of a continuous injective map from $\mathbb{S}^1$ into $\mathbb{R}^2$, and by the Jordan curve theorem, a Jordan curve separates $\mathbb{R}^2$ into two connected components, one bounded and one unbounded.

\begin{lemma}\label{lem:jordan}
Let $n=2$ and assume \eqref{eq:Fk}. If $\Omega$ is a Jordan domain, that is,
$\Omega$ is connected and $\partial\Omega$ is a Jordan curve, then every
$\Omega_k$ is a Jordan domain and $\partial\Omega_k=F_k[\partial\Omega]$.
\end{lemma}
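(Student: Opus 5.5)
The plan is to transport everything through the homeomorphism $F_k$ and use \eqref{eq:bdry}, already established in the proof of Theorem~\ref{prop29}.

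\textbf{Boundary.} Assumption \eqref{eq:Fk} says that $F_k\colon\overline\Omega\to\overline{\Omega_k}$ is a bijection mapping $\Omega$ onto $\Omega_k$, with $\Omega$ and $\Omega_k$ open. The argument of \eqref{eq:bdry} therefore applies verbatim:
\[
F_k[\partial\Omega]=F_k[\overline\Omega\setminus\Omega]=\overline{\Omega_k}\setminus\Omega_k=\partial\Omega_k .
\]

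\textbf{Jordan curve.} Let $\gamma\colon\mathbb S^1\to\mathbb R^2$ be a continuous injection with image $\partial\Omega$. Since $F_k$ is continuous and injective on $\overline\Omega\supset\partial\Omega$, the composition $F_k\circ\gamma$ is again a continuous injection of $\mathbb S^1$. Its image is $F_k[\partial\Omega]=\partial\Omega_k$, so $\partial\Omega_k$ is a Jordan curve.

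\textbf{Connectedness.} $\Omega_k=F_k[\Omega]$ is the continuous image of the connected set $\Omega$, hence it is connected.

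If the intended definition of a Jordan domain is instead ``the bounded complementary component of a Jordan curve'', one further step is needed.
\begin{itemize}
\item $\Omega_k$ is connected and disjoint from $\partial\Omega_k$, so it lies in a single component of $\mathbb R^2\setminus\partial\Omega_k$.
\item $\Omega_k$ is bounded, since $\Omega_k\subset B$. It therefore cannot lie in the unbounded component: if it did, its boundary would be a proper subset of that component's frontier, yet the whole Jordan curve $\partial\Omega_k$ bounds $\Omega_k$. So $\Omega_k$ lies in the interior component $U$.
\item $\Omega_k$ is open in $U$. It is also closed in $U$, because $\overline{\Omega_k}\cap U=(\Omega_k\cup\partial\Omega_k)\cap U=\Omega_k$. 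As $U$ is connected, $\Omega_k=U$.
\end{itemize}

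\textbf{Main obstacle.} The only delicate point is the identity $F_k[\partial\Omega]=\partial\Omega_k$, which needs $F_k$ to be a bijection of both the closures and the open sets. This is exactly what \eqref{eq:Fk} provides, so I expect no genuine obstruction. The rest is elementary point-set topology plus the Jordan curve theorem.
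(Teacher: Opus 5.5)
Your proposal is correct and follows the paper's proof essentially step by step: the boundary identity $F_k[\partial\Omega]=\partial\Omega_k$, the image of a Jordan curve under the homeomorphism $F_k$, connectedness of $\Omega_k=F_k[\Omega]$, and the open-and-closed argument inside a component of $\mathbb{R}^2\setminus\partial\Omega_k$. The one weak spot is your second bullet, whose justification does not hold: by the Jordan curve theorem $\partial\Omega_k$ is the \emph{entire} frontier of each component, not a proper subset of it. The paper sidesteps this by running your third-bullet argument in whichever component $C$ contains the nonempty set $\Omega_k$, which gives $\Omega_k=C$, and only then using boundedness of $\Omega_k$ to conclude that $C$ is the interior component.
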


\begin{proof*}
Since $F_k$ is a bijection of $\overline\Omega$ onto $\overline{\Omega_k}$
mapping $\Omega$ onto $\Omega_k$, and since $\Omega$ and $\Omega_k$ are
open,
\[
  F_k[\partial\Omega]=F_k\bigl[\overline\Omega\setminus\Omega\bigr]
  =\overline{\Omega_k}\setminus\Omega_k=\partial\Omega_k .
\]
As $F_k$ is a homeomorphism, it maps $\partial\Omega$ homeomorphically onto
$\Gamma_k:=\partial\Omega_k$. Hence $\Gamma_k$ is homeomorphic to
$\partial\Omega$, and therefore to $S^1$: it is a Jordan curve. Moreover
$\Omega_k=F_k[\Omega]$ is connected, being the continuous image of a
connected set. 
It remains to check that $\Omega_k$ is the interior region of $\Gamma_k$.
By the Jordan curve theorem, $\mathbb{R}^2\setminus\Gamma_k$ has exactly two
connected components, a bounded one $I_k$ and an unbounded one $E_k$, and
$\partial I_k=\partial E_k=\Gamma_k$. Being open, $\Omega_k$ is disjoint
from its boundary $\Gamma_k$; being connected, it is contained in one of
the two components, say $C$. Then $\Omega_k$ is open in $C$, and it is also
closed in $C$: if $z\in C\cap\overline{\Omega_k}$ and $z\notin\Omega_k$,
then $z\in\overline{\Omega_k}\setminus\Omega_k=\Gamma_k$, which is disjoint
from $C$. Since $C$ is connected and $\Omega_k\neq\emptyset$, we conclude
that $\Omega_k=C$, and $C=I_k$ because $\Omega_k$ is bounded.
\qed
\end{proof*}
\begin{theorem}
\label{cor:eig}
Under the assumptions of Theorem~\ref{prop29}, suppose moreover that
$\Omega$ satisfies one of the following assumptions:
\begin{enumerate}[label=\roman*)]
    \item $\Omega$ is convex;
    \item $\Omega$ is Lipschitz;
    \item $n=2$ and $\Omega$ is a Jordan domain.
\end{enumerate}
Then $\Omega_k$ $\gamma$-converges to $\Omega$, and
\[
    \lambda_j(\Omega_k)\longrightarrow\lambda_j(\Omega)
    \qquad\text{for every } j\in\mathbb{N}.
\]
\end{theorem}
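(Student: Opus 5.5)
The plan is to reduce every case to one of the three $\gamma$-convergence criteria above (Theorems~\ref{convex}, \ref{chenais}, \ref{sverak}), with Theorem~\ref{prop29} supplying the Hausdorff convergence $\Omega_k\to\Omega$. Once $\Omega_k\gconv\Omega$ is established, Theorem~\ref{prop1} gives $\lambda_j(\Omega_k)\to\lambda_j(\Omega)$ for every $j$. So the real work is to check, in each case, the geometric hypothesis on the family $\{\Omega_k\}$ (possibly after discarding finitely many $k$, which does not affect convergence).

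Case ii) is the cleanest. By Lemma~\ref{lem:unifLip}, the sets $\Omega_k$, $k\ge k_0$, are uniformly Lipschitz, and $\Omega$ is Lipschitz by assumption. Theorem~\ref{chenais} then applies to the tail sequence.

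Case i) reduces to case ii). A bounded convex open set with nonempty interior is Lipschitz, since it satisfies a uniform cone condition with vertex cones pointing toward an interior ball. Thus $\Omega$ is Lipschitz and case ii) applies. This route is preferable because the images $\Omega_k=F_k[\Omega]$ need not be convex, so Theorem~\ref{convex} cannot be used directly. The one point to verify is that $\Omega\neq\emptyset$, which is implicit.

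Case iii) uses Theorem~\ref{sverak} with $p=1$. By Lemma~\ref{lem:jordan}, each $\Omega_k$ is the interior region $I_k$ of the Jordan curve $\Gamma_k=F_k[\partial\Omega]$. Hence $\B\setminus\Omega_k=(\B\cap E_k)\cup\Gamma_k$, and it remains to show this set is connected. This is the main obstacle, since $E_k$ may meet $\partial B$ and $\B\setminus\Omega_k$ is cut off by the box.

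I would argue as follows. Because $\overline{\Omega_k}\subset B$ is compact, the set $\B\setminus I_k$ contains the connected set $\B\setminus U$ for a small neighbourhood $U$ of $\overline{\Omega_k}$; alternatively, it contains the closed annular region between $\Gamma_k$ and $\partial B$. The unbounded component satisfies $\overline{E_k}=E_k\cup\Gamma_k$, and $\B\cap\overline{E_k}$ is connected because it is a closed topological annulus: by the Schoenflies theorem it is homeomorphic to the region between two nested Jordan curves. If one wants to avoid Schoenflies, a direct argument suffices. Every point of $\B\cap E_k$ can be joined within $\B\cap E_k$ to $\partial B$, by moving radially outward from an interior point of $\Omega$, or by using that $E_k$ is connected and open together with the convexity of $\B$. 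Also, $\partial B$ is connected, and $\Gamma_k\subset\overline{E_k}$.

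So the complement in $B$ has exactly one connected component. Theorem~\ref{sverak} then yields $\Omega_k\gconv\Omega$, and Theorem~\ref{prop1} concludes the proof.
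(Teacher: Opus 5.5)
Your proposal is correct and follows the paper's proof. Case i) reduces to ii) because bounded convex sets are Lipschitz, ii) is Lemma~\ref{lem:unifLip} plus Theorem~\ref{chenais}, and iii) is Lemma~\ref{lem:jordan} plus Theorem~\ref{sverak} with $p=1$, with Theorem~\ref{prop29} supplying Hausdorff convergence and Theorem~\ref{prop1} the eigenvalues. Your explicit check that $\B\setminus\Omega_k$ is connected fills in a step the paper leaves implicit. The radial option in that check fails when $\Omega_k$ is not star-shaped, since an outward ray can re-enter $\overline{\Omega_k}$, but your Schoenflies route and your path-in-$E_k$-plus-convexity route are sound (e.g.\ by nearest-point projection onto a closed box containing $\overline{\Omega_k}$ in its interior).
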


\begin{proof*}
    The proof of $i)$ is a direct consequence of $ii)$ since every bounded, convex and open domain is Lipschitz. The proof of $ii)$ is a consequence of Lemma \ref{lem:unifLip} together with Theorem \ref{chenais}. Finally, $iii)$ is given by Lemma \ref{lem:jordan} together with Theorem \ref{sverak}.
    \qed
\end{proof*}
\subsection{Convergence of eigenvalues of divergence form operators} \label{ss:convg_divform} 

In this subsection we study the continuity of eigenvalues for the more general elliptic operator which acts on a function $u$ via $-\operatorname{div}(G\nabla u)$ as in \eqref{eq:nonsymG_op}. We start from the Poisson problem
\begin{equation}
  \begin{cases}
 -\operatorname{div}(G\nabla u)=f, & \text{in }\Omega,\\
u=0, & \text{on }\partial\Omega.
\end{cases}
    \label{genlap}
\end{equation}
Above, $\Omega$ is an open subset of $B$.  The first equation holds in the distributional sense, and we assume $f \in L^2(\Omega)$.

\begin{definition}\label{G_sym}
Let $\{\Omega_k\}_{k\in\mathbb{N}}$ be a sequence of open subsets of $B$, and let $\Omega \subset B$ be an open set. Let
\[
L_Gu:=-\operatorname{div}(G\nabla u),
\]
where $G=(g_{ij})$ is a symmetric, uniformly elliptic $n \times n$ matrix, in the sense that there exists $m>0$:
\begin{align}
&g_{ij}\in L^\infty(B) \qquad{\textrm{for all $i$ and $j$}},\\ 
\label{unifell}
 &  
\sum_{i,j=1}^{n}  g_{ij}(x)\xi_i \xi_j \ge m|\xi|^2 \qquad \text{for a.e. } x\in B, \ \xi\in \mathbb{R}^n. 
\end{align}
We say that $\Omega_k$ \emph{$\gamma_G$-converges} to $\Omega$, and write $\Omega_k \xrightarrow{\gamma_G} \Omega,$ if, for every $f\in L^2(B)$, the weak solution $u_k^f\in H_0^1(\Omega_k)$ of
\[
\begin{cases}
 -\operatorname{div}(G\nabla u)=f, & \text{in }\Omega_k,\\
u=0, & \text{on }\partial\Omega_k,
\end{cases}
\]
converges strongly in $L^2(B)$ to $u^f$, the weak solution of
\[
\begin{cases}
 -\operatorname{div}(G\nabla u)=f, & \text{in }\Omega,\\
u=0, & \text{on }\partial\Omega.
\end{cases}
\]
Functions in $H_0^1(\Omega_k)$ are extended by zero outside $\Omega_k$ and regarded as elements of $H_0^1(B)$.
\end{definition}

As for the Laplacian,
the operator $L_G$ has a discrete spectrum on every open set $\Omega\subseteq B$.
Indeed, the bilinear form
\begin{equation}
  \label{eq:forG}
  a_{G,\Omega}:H^1_0(\Omega)\times H^1_0(\Omega)\rightarrow\R,\qquad
  a_{G,\Omega}(u,v)=\int_\Omega G\nabla u\cdot\nabla v
\end{equation}
is symmetric and, by \eqref{unifell}, continuous and coercive on
$H^1_0(\Omega)$. Along the same lines as in the Laplacian case, one can show that its spectrum consists of a sequence of eigenvalues of finite multiplicity.
\[
  0<\lambda^G_1(\Omega)\le\lambda^G_2(\Omega)\le\dots\uparrow\infty ,
\]
repeated according to multiplicity. 
For $G=\mathrm I$ we recover the Dirichlet eigenvalues $\lambda_j(\Omega)$ of
\eqref{eq:minmax}. The Courant--Fischer min--max principle applied to $a_{G,\Omega}$ yields the analogous monotonicity property for the eigenvalues on nested domains.

We aim to show that Šverák's Theorem \ref{sverak} naturally extends to the more general class of elliptic operators in divergence form. For this purpose, we consider the notion of Mosco \cite{mosco} convergence for function spaces, which provides the natural analogue of $\gamma$-convergence at the level of the associated Sobolev spaces.
\begin{definition}
    Let \(X\) be a Banach space and \((Y_k)_{k\in\mathbb{N}}\) a sequence of subsets of \(X\). The weak upper and the strong lower limits in the sense of Kuratowski are defined as follows:
\begin{align*}
w\text{-}\limsup_{k\to\infty} Y_k
&=
\left\{
u\in X :
\exists (k_h)_h,\;
\exists\, u_{k_h}\in Y_{k_h}
\text{ such that }
u_{k_h}\rightharpoonup u
\right\},\\[1ex]
s\text{-}\liminf_{k\to\infty} Y_k
&=
\left\{
u\in X :
\exists\, u_k\in Y_k
\text{ such that }
u_k \to u
\right\}.
\end{align*}

If $Y,\{Y_k\}_{k\in\mathbb{N}}$ are closed subspaces of $X$, we say that $Y_k$ converges to $Y$ in the sense of Mosco and we write $Y_k\xrightarrow{M}Y$ if
\begin{align*}
\text{(M1)}\qquad
&Y
\subseteq
s\text{-}\liminf_{k\to\infty} Y_k,\\
\text{(M2)}\qquad
&w\text{-}\limsup_{k\to\infty} Y_k
\subseteq Y.
\end{align*}
\end{definition}
Condition (M1) expresses an approximation property: every element of the limit space $Y$ can be approximated strongly by a sequence of elements $u_k\in Y_k$. Condition (M2) is a stability property: every weak limit in $X$ of a sequence $u_k\in Y_k$ necessarily belongs to $Y$. Together, these conditions ensure that the sequence of spaces converges to $Y$ without losing any element of the limit space nor introducing additional ones.
\begin{lemma}
Let $X$ be a Hilbert space and let $\|\cdot\|_1$, $\|\cdot\|_2$ be equivalent norms on $X$, i.e., there exist constants $c, C > 0$ such that
\[
c \|x\|_1 \le \|x\|_2 \le C \|x\|_1 \quad \forall x \in X.
\]
If a sequence of closed subspaces $\{Y_k\}$ Mosco-converges to $Y$ with respect to $\|\cdot\|_1$, then it also Mosco-converges to $Y$ with respect to $\|\cdot\|_2$.
\label{lem:equivalence}
\end{lemma}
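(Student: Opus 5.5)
The plan is to show that, under norm equivalence, both Mosco conditions (M1) and (M2) are unchanged. Each condition involves only two topological notions: strong convergence of sequences in $X$, and weak convergence of sequences in $X$. So it is enough to check that these two notions coincide for $\|\cdot\|_1$ and $\|\cdot\|_2$. Closedness of the subspaces $Y_k$ and $Y$ is also a purely topological property, so it carries over automatically.

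\textbf{Strong convergence.} By the inequality $c\|x\|_1\le\|x\|_2\le C\|x\|_1$ applied to $x=u_k-u$, we get $\|u_k-u\|_1\to0$ if and only if $\|u_k-u\|_2\to0$. Hence the sets $s\text{-}\liminf_{k\to\infty} Y_k$ defined with respect to the two norms are equal, and (M1) transfers directly.

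\textbf{Weak convergence.} Here I would avoid inner products and use the Banach-space definition: $u_k\rightharpoonup u$ means $\ell(u_k)\to\ell(u)$ for every continuous linear functional $\ell$ on $X$. A linear functional $\ell$ is $\|\cdot\|_1$-bounded if and only if it is $\|\cdot\|_2$-bounded, because $|\ell(x)|\le M\|x\|_1\le (M/c)\|x\|_2$ and symmetrically in the other direction. So the two norms have the same dual space $X^*$ and therefore the same weakly convergent sequences. It follows that the sets $w\text{-}\limsup_{k\to\infty} Y_k$ coincide as well, and (M2) transfers.

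One point needs a remark. If $\|\cdot\|_2$ comes from a different inner product, weak convergence might instead be read via that inner product. By the Riesz representation theorem in $(X,\|\cdot\|_2)$, which is complete because the norms are equivalent, those functionals are exactly the elements of the same dual $X^*$, so nothing changes. This identification of the dual spaces is the only step that needs any care; the rest is immediate. Combining the two cases gives $Y_k\xrightarrow{M}Y$ with respect to $\|\cdot\|_2$.
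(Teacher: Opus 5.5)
Your proposal is correct and follows essentially the same route as the paper. Both arguments note that equivalent norms give the same strongly convergent sequences and the same dual space, hence the same weakly convergent sequences, so the sets appearing in (M1) and (M2) are unchanged. Your proposal spells out the boundedness of functionals and the Riesz remark, which the paper states in one line.
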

\begin{proof*}
Since the norms are equivalent, they induce the same strong and weak topologies on $X$. Indeed, strong convergence is preserved by norm equivalence, while weak convergence is unchanged since the dual space (as a set) and the notion of continuity of linear functionals do not depend on the choice of an equivalent norm. Hence, both Mosco conditions (M1) and (M2) are independent of the choice of equivalent norm, yielding the claim.
\qed
\end{proof*}
 
The following proposition says that
$\gamma_G$-convergence does not depend on $G$ at all.
 
\begin{theorem}\label{prop:gammaG}
Let $\{\Omega_k\}_{k\in\N}$ and $\Omega$ be open subsets of $B$, and let $G$ be defined as above. Then
\[
  \Omega_k\gconv\Omega
  \qquad\Longleftrightarrow\qquad
  \Omega_k\gGconv\Omega .
\]
\end{theorem}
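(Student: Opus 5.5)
The plan is to route both notions of convergence through Mosco convergence of the Sobolev spaces, $H^1_0(\Omega_k)\Mconv H^1_0(\Omega)$ inside $X=H^1_0(B)$, and show that this is equivalent to $\gamma_G$-convergence for \emph{every} admissible $G$. Taking $G=\mathrm I$ recovers $\gamma$-convergence. Then $\Omega_k\gconv\Omega \iff H^1_0(\Omega_k)\Mconv H^1_0(\Omega)\iff \Omega_k\gGconv\Omega$. By Lemma \ref{lem:equivalence}, Mosco convergence does not depend on which of the equivalent norms $\|\nabla u\|_{L^2}$ and $\|u\|_G:=a_{G,B}(u,u)^{1/2}$ we use on $H^1_0(B)$. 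These norms are equivalent by \eqref{unifell} and the $L^\infty$ bound on $g_{ij}$, so the Mosco condition is a $G$-independent, purely space-level statement. It therefore suffices to prove, for a fixed $G$ and working in the Hilbert space $(H^1_0(B),a_{G,B})$, that $\gamma_G$-convergence is equivalent to Mosco convergence.

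\textbf{Mosco $\Rightarrow$ $\gamma_G$.} Fix $f\in L^2(B)$ and let $u_k=u_k^f$ be the solutions on $\Omega_k$.

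Testing the equation with $u_k$ and using coercivity together with Poincaré on $B$ gives a uniform bound in $H^1_0(B)$. Along a subsequence, $u_k\rightharpoonup u$ weakly in $H^1_0(B)$ and strongly in $L^2(B)$ by Rellich on $B$. By (M2), $u\in H^1_0(\Omega)$.

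For any $v\in H^1_0(\Omega)$, (M1) gives $v_k\in H^1_0(\Omega_k)$ with $v_k\to v$ strongly. Passing to the limit in $a_G(u_k,v_k)=\int f v_k$ (a weak–strong pairing) yields $a_G(u,v)=\int fv$. Hence $u=u^f$, and uniqueness of the limit gives convergence of the whole sequence.

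\textbf{$\gamma_G$ $\Rightarrow$ Mosco.} This is the step I expect to be the main obstacle.

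For (M1), take $v\in H^1_0(\Omega)$ and first suppose $v=u^f$ for some $f\in L^2$. Such $u^f$ form a dense subset of $H^1_0(\Omega)$, since they include the eigenfunctions, or equivalently the image of the resolvent is dense. Then $v_k:=u_k^f$ converges to $v$ strongly in $H^1$, not merely in $L^2$. Indeed, $a_G(u_k,u_k)=\int fu_k\to\int fu=a_G(u,u)$, and norm convergence plus weak convergence gives strong convergence. A diagonal argument then extends (M1) to all of $H^1_0(\Omega)$, using that $s$-$\liminf$ is closed.

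For (M2), let $w_{k_h}\in H^1_0(\Omega_{k_h})$ with $w_{k_h}\rightharpoonup w$ in $H^1_0(B)$. Set $P_k$ to be the $a_G$-orthogonal projection onto $H^1_0(\Omega_k)$ and $P$ the projection onto $H^1_0(\Omega)$.

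I would first show that $\gamma_G$-convergence gives $P_k\phi\to P\phi$ strongly for every $\phi\in H^1_0(B)$. For $\phi=u^f_B$, the solution on $B$, we have $P_k\phi=u_k^f$, and the previous paragraph shows this converges strongly; density of such $\phi$ and $\|P_k\|\le1$ give the general case.

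Then, for any $\phi$,
\[
a_G(w,\phi-P\phi)=\lim_h a_G\bigl(w_{k_h},\phi-P_{k_h}\phi\bigr)=0,
\]
again as a weak–strong pairing, since $w_{k_h}\in H^1_0(\Omega_{k_h})$ is orthogonal to the range of $I-P_{k_h}$. Hence $w\perp\operatorname{ran}(I-P)$, so $w\in H^1_0(\Omega)$.

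Applying the full argument once with $G$ and once with $\mathrm I$ proves the equivalence in the statement.
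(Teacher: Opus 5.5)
Your proposal is correct and takes the same route as the paper: both identify $\gamma$- and $\gamma_G$-convergence with Mosco convergence of $H^1_0(\Omega_k)$ to $H^1_0(\Omega)$ inside $H^1_0(B)$, which does not depend on the choice of equivalent norm by Lemma~\ref{lem:equivalence}. The difference is that the paper cites external results for the equivalence between $\gamma$-/$\gamma_G$-convergence and Mosco convergence, whereas you prove that equivalence directly for an arbitrary admissible $G$, which makes the argument self-contained. In one direction you use compactness plus (M1)--(M2). In the other you use strong $H^1$-convergence of $u_k^f$ and strong convergence of the projections $P_k$; there the weak $H^1$-convergence of $u_k^f$ follows from your earlier uniform bound together with identification of the limit in $L^2$.
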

 
\begin{proof*}
Let $H^1_0(\Omega)\hookrightarrow H^1_0(B)$ by extension to zero for any open
set $\Omega\subseteq B$. We define on $H^1_0(B)\times H^1_0(B)$
\[
  a(u,v)=\int_B\nabla u\cdot\nabla v\ ,
  \qquad
  a_G(u,v)=\int_B G\nabla u\cdot\nabla v\  .
\]
By the uniform ellipticity of $G$ there exist some positive
constants $c,C$,
\[
  c\,a(u,u)\le a_G(u,u)\le C\,a(u,u),
  \qquad\forall\,u\in H^1_0(B),
\]
so that the norm $\|u\|_G=a_G(u,u)^{1/2}$ is equivalent to the standard norm
\[
  \|u\|_{H^1_0(B)}=a(u,u)^{1/2}=\|\nabla u\|_{L^2(B)} .
\]
Equivalent norms define the same space $H^1_0(B)$ as a topological space.
By \cite[Thm.~4.5.4]{buttazzo}, $\Omega_k\gconv\Omega$ if and only if
\[
  H^1_0(\Omega_k)\Mconv H^1_0(\Omega).
\]
By Lemma~\ref{lem:equivalence}, the Mosco convergence also holds with respect
to $\|\cdot\|_G$. By \cite[Theorem 3.3]{daners03} and \cite[Theorem 2.3.10]{henrot}, we conclude that
\[
  \pushQED{\qed}
  \Omega_k\gconv\Omega
  \qquad\Longleftrightarrow\qquad
  \Omega_k\gGconv\Omega .
  \qedhere
  \popQED
\]
\end{proof*}
 Theorem~\ref{prop:gammaG} says that $\gamma$-convergence is a property of
the sequence of spaces $H^1_0(\Omega_k)$ inside $H^1_0(B)$, and not of the
differential operator: the operator enters only through the equivalence of the
forms $a$ and $a_G$, which the ellipticity of $G$ guarantees. Every criterion established for $\gamma$-convergence transfers to $\gamma_G$ including the convex
case in Theorem \ref{convex}, Chenais's Theorem \ref{chenais} and \v{S}ver\'ak's Theorem \ref{sverak}. We record the last one explicitly.
 
\begin{theorem}[\v{S}ver\'ak for the general problem in divergence form]
\label{th:sverak_G}
Let $n=2$ and let $\{\Omega_k\}_{k\in\N}$ be a sequence of open subsets of
$B\subseteq\R^2$. Let $p\in\N$ be fixed and assume that the complements
$\Omega_k^c$ in $B$ have at most $p$ connected components. If $\Omega_k$
converges to an open set $\Omega$ with respect to the Hausdorff distance, then
\[
  \Omega_k\gGconv\Omega .
\]
\end{theorem}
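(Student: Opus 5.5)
The proof is a two-step reduction, with no new analysis beyond the results already stated. First, under the hypotheses of the statement (planar open sets $\Omega_k\subset B$ whose complements in $B$ have at most $p$ connected components, and Hausdorff convergence $\Omega_k\to\Omega$), Šverák's Theorem~\ref{sverak} applies verbatim. It yields $\Omega_k\gconv\Omega$, i.e.\ $\gamma$-convergence for the Dirichlet Laplacian. Second, Theorem~\ref{prop:gammaG} states that $\gamma$-convergence and $\gamma_G$-convergence are equivalent for every symmetric, bounded, uniformly elliptic $G$ as in Definition~\ref{G_sym}. Applying the implication $\Omega_k\gconv\Omega\Rightarrow\Omega_k\gGconv\Omega$ concludes the proof.

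The point I would check is that the standing assumptions match. The matrix $G$ in the statement is the one fixed in Definition~\ref{G_sym}, with $g_{ij}\in L^\infty(B)$ and uniform ellipticity \eqref{unifell} on all of $B$, not only on $\Omega$. This is exactly what the proof of Theorem~\ref{prop:gammaG} uses: it needs the form equivalence $c\,a(u,u)\le a_G(u,u)\le C\,a(u,u)$ on $H^1_0(B)$, where $C$ comes from $\|g_{ij}\|_{L^\infty}$ and $c$ from $m$. The sets $\Omega_k$ and $\Omega$ are open subsets of $B$, as both cited theorems require, so no extra regularity of $\partial\Omega$ is needed.

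The potential obstacle lies in the equivalence itself, through the Mosco characterization $H^1_0(\Omega_k)\Mconv H^1_0(\Omega)$, Lemma~\ref{lem:equivalence}, and the cited results of Daners and Henrot–Pierre. All of this is already established in Theorem~\ref{prop:gammaG}, so here it is taken as given. Consequently, eigenvalue continuity $\lambda^G_j(\Omega_k)\to\lambda^G_j(\Omega)$ follows as in Theorem~\ref{prop1}, and so does continuity of $q_k(\Omega)$ for the general operator. These consequences can be noted as a remark but are not part of the statement.
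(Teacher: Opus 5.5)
Your proposal is correct and matches the paper's proof exactly. \v{S}ver\'ak's Theorem~\ref{sverak} gives $\Omega_k\gconv\Omega$, and Theorem~\ref{prop:gammaG} turns this into $\Omega_k\gGconv\Omega$; your observation that $G$ must be bounded and uniformly elliptic on all of $B$, so that the form equivalence holds on $H^1_0(B)$, is consistent with the paper's standing assumptions.
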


\begin{proof*}
By \ref{sverak} the hypotheses give $\Omega_k\gconv\Omega$, and
Theorem~\ref{prop:gammaG} turns this into $\Omega_k\gGconv\Omega$.
\qed
\end{proof*}
\begin{theorem}\label{prop:eigG}
Let $\{\Omega_k\}_{k\in\N}$ and $\Omega$ be open subsets of $B$. If $\Omega_k\gGconv\Omega$,
then \[\lambda^G_j(\Omega_k)\to\lambda^G_j(\Omega) \text{ for every j }\in\N.\]
\end{theorem}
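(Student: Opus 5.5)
The plan is to mirror the proof of Theorem~\ref{prop1} for the Laplacian, working with resolvent operators on the fixed space $L^2(B)$. Since $\lambda_j^G(\Omega)$ are eigenvalues of a compact self-adjoint positive operator, we reduce eigenvalue convergence to norm convergence of the resolvents.

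For an open set $U\subseteq B$ let $R_U:L^2(B)\to L^2(B)$ send $f$ to the weak solution $u^f\in H^1_0(U)$ of $-\operatorname{div}(G\nabla u)=f|_U$, extended by zero. By symmetry and coercivity of $a_{G,U}$ in \eqref{eq:forG}, $R_U$ is self-adjoint, nonnegative and compact on $L^2(B)$. Its nonzero eigenvalues are exactly $1/\lambda_j^G(U)$, counted with multiplicity, and the orthogonal complement of $L^2(U)$ lies in its kernel. The hypothesis $\Omega_k\gGconv\Omega$ says precisely that $R_{\Omega_k}\to R_\Omega$ strongly, i.e.\ pointwise on $L^2(B)$.

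The key step, and the one I expect to be the main obstacle, is upgrading this pointwise convergence to convergence in operator norm. I would use collective compactness of the family. Coercivity gives
\[
m\|\nabla R_{\Omega_k}f\|_{L^2}^2\le a_G(R_{\Omega_k}f,R_{\Omega_k}f)=\int f\,R_{\Omega_k}f\le \|f\|\,\|R_{\Omega_k}f\|,
\]
and combined with Poincaré on $B$ this yields $\|R_{\Omega_k}f\|_{H^1_0(B)}\le C\|f\|_{L^2}$ uniformly in $k$. Hence $\bigcup_k R_{\Omega_k}(\text{unit ball})$ is bounded in $H^1_0(B)$, and by Rellich it is relatively compact in $L^2(B)$.

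A standard lemma then applies: if $T_k\to T$ strongly, the $T_k$ are self-adjoint, and $\{T_k\}$ is collectively compact, then $\|T_k-T\|\to0$. To prove it, suppose the norm convergence fails. Then there exist unit vectors $f_k$ with $|\langle (T_k-T)f_k,f_k\rangle|\ge\delta$, using self-adjointness to express the norm through the quadratic form. Pass to a weakly convergent subsequence $f_k\rightharpoonup f$. Collective compactness together with strong convergence and self-adjointness gives $T_kf_k\to Tf$ strongly. This is seen by testing against $g$, since $\langle T_kf_k,g\rangle=\langle f_k,T_kg\rangle\to\langle f,Tg\rangle$, and any strong cluster point of $T_kf_k$ must then equal $Tf$. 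Likewise $Tf_k\to Tf$. It follows that $\langle (T_k-T)f_k,f_k\rangle\to0$, a contradiction.

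Finally, for compact self-adjoint nonnegative operators, norm convergence implies convergence of the ordered eigenvalues. By the min--max characterization, $|\mu_j(T_k)-\mu_j(T)|\le\|T_k-T\|$, where $\mu_j$ denotes the $j$-th largest eigenvalue. Hence $1/\lambda_j^G(\Omega_k)\to1/\lambda_j^G(\Omega)>0$, and taking reciprocals gives $\lambda_j^G(\Omega_k)\to\lambda_j^G(\Omega)$ for every $j$.

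Alternatively, one could simply cite the general version of \cite[Theorem~2.3.2]{henrot}, whose proof uses only these properties.
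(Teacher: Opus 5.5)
Your proposal is correct and takes the same approach as the paper. The paper only says that the resolvent-based proof of Theorem~\ref{prop1} carries over with $a$ replaced by $a_G$, deferring the details to Daners. Your argument (uniform $H^1_0(B)$ bounds from coercivity, collective compactness via Rellich, upgrading strong to norm convergence of the self-adjoint resolvents, then $|\mu_j(T_k)-\mu_j(T)|\le\|T_k-T\|$ from min--max) is exactly that proof written out in full.
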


\begin{proof*}
By definition, $\gamma_G$-convergence is the convergence of the resolvents of $L_G$, so the
proof of Theorem~\ref{prop1} works along the exact same lines, with the form $a$ replaced by $a_G$; see
also \cite[Thm.~5.2.4, Cor.~5.2.5 and Cor.~4.3.2]{daners}, where the argument is carried out
for general elliptic operators in divergence form.
\qed
\end{proof*}

\subsection{Eigenvalue convergence and non-convergence on rough domains} \label{ss:examples_conv_not}

    Šverák's theorem in dimension $2$ is quite general. In particular, it applies to any sequence of domains whose boundaries are Jordan curves, even with very low regularity. This class of curves includes figures that can hardly be called curves in common sense, as they are impossible to draw; one may only draw approximations thereof. 
    
   \subsubsection{Eigenvalue convergence in case of a fractal boundary} 
    
    For example, we consider the fractal curve known as Koch snowflake.
Starting from a unit side-length equilateral triangle, we define a sequence of polygonal Jordan curves $\{\Gamma_k\}_{k\in\mathbb{N}}$ as follows. Given $\Gamma_k$, we construct $\Gamma_{k+1}$ by replacing each line segment of $\Gamma_k$ with four segments according to the following procedure:
\begin{enumerate}
    \item Divide the segment into three parts of equal length;
    \item Construct an equilateral triangle, pointing outward, on the middle subsegment;
    \item Remove the middle subsegment of the original segment.
\end{enumerate}

\begin{figure}[h!]
	\centering
	\includegraphics[width=1\textwidth]{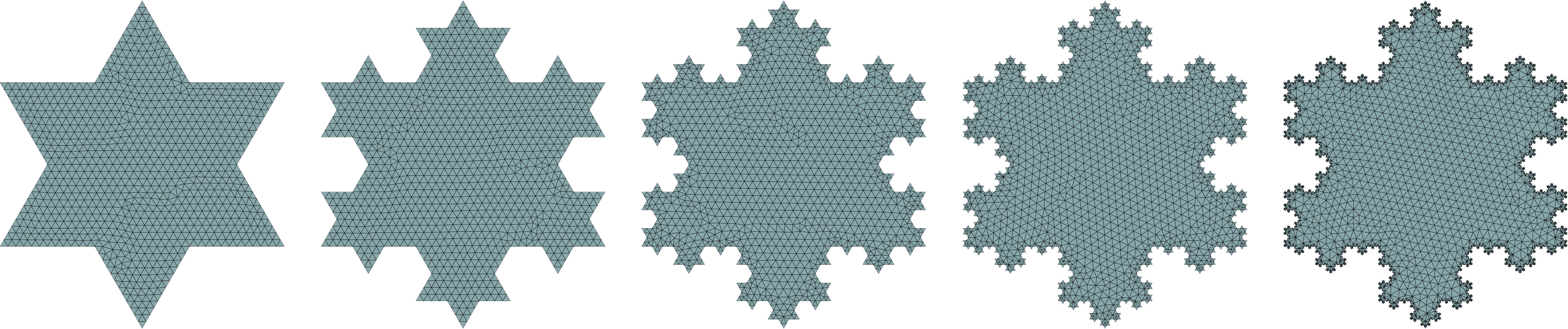}
	\caption{The first five iterations of the Koch snowflake.}
	\label{fractal-disc}
\end{figure}

In this way, each segment is replaced by four segments of equal length as depicted in Figure \ref{fractal-disc}. Observe that $\Gamma_{k+1}$ is again a closed polygonal curve.
We define the Koch snowflake curve as the limit $
\Gamma := \lim_{k\to\infty} \Gamma_k$,
which is a Jordan curve.
Finally, we define the Koch snowflake domain as $\Omega := \bigcup_{k=1}^{\infty} \Omega_k$,
where $\Omega_k := \operatorname{int}\Gamma_k$ denotes the bounded connected open set in $\mathbb{R}^2$ such that $\partial \Omega_k = \Gamma_k$. We claim that $\Omega_k \xrightarrow{H} \Omega$. Consider $B$ such that $\overline{\Omega}\subset B$. 
Observe that $B\setminus\Omega_2\subseteq B\setminus\Omega_1$ thus $\sup_{y \in B\setminus\Omega_2} d(B\setminus\Omega_1,y)=0$. On the other hand, suppose that $x\in (B\setminus\Omega_1)\cap (B\setminus \Omega_2)$, then
\[ d(x,\B\setminus\Omega_2)=0.\]
If $x\in \B\setminus\Omega_1$ but $x\not \in \B\setminus \Omega_2$ then $x$ is in one of the equilateral triangles of side length $1/3$ built on the sides of $\Omega_1$. Thus, by the Pythagorean Theorem
\[\sup_{x \in \B\setminus \Omega_1} d(x,\B\setminus\Omega_2)\le \frac{1}{3}\frac{\sqrt{3}}{2}\quad \Rightarrow \quad d_H(\Omega_1,\Omega_2)\le  \frac{1}{3}\frac{\sqrt{3}}{2}.\]
By induction
\[d_H(\Omega_k,\Omega_{k+1})\le  \frac{1}{3^k}\frac{\sqrt{3}}{2}.\]
Finally, for an integer $m>k$, the triangle inequality gives
\[
  d_H(\Omega_k,\Omega_{m})\le\sum_{r=k}^{m-1}d_H(\Omega_r,\Omega_{r+1})
  \le\sum_{r=k}^{\infty}\frac{1}{3^{r}}\frac{\sqrt3}{2}
  =\frac{1}{3^{k}}\frac{3\sqrt3}{4}.
\]
The sets $\overline B\setminus\Omega_m$ decrease, as $m\to\infty$, to
$\overline B\setminus\Omega$, and a decreasing sequence of nonempty compact
sets converges in the Hausdorff distance to its intersection; hence
$d_H(\Omega_k,\Omega_m)\to d_H(\Omega_k,\Omega)$ as $m\to\infty$, and the
estimate above passes to the limit:
\[
  d_H(\Omega_k,\Omega)\le\frac{1}{3^{k}}\frac{3\sqrt3}{4},
  \qquad\text{so that}\qquad
  \lim_{k\to\infty}d_H(\Omega_k,\Omega)=0 .
\]
Since each $\Gamma_k=\partial\Omega_k$ is a Jordan curve, they each divide $\mathbb{R}^2$ into two connected regions and since $\operatorname{dist}(\Gamma_k,\partial B)>0$ for all $k$, they each divide $B$ into two connected regions. We can therefore apply Šverák's theorem and conclude that $\Omega_k\xrightarrow{\gamma}\Omega$. By Theorem \ref{prop1}, $\lambda_j(\Omega_k)\to\lambda_j(\Omega)$ for every $j$, i.e. the eigenvalues of the snowflake are the limits of those of its polygonal approximations.

\subsubsection{Hausdorff convergence without eigenvalue convergence}
We conclude this subsection with some examples of pathological domains where the continuity of eigenvalues fails.  

In our first example of Hausdorff convergence without eigenvalue convergence, the sequence of planar domains Hausdorff converges to a segment. This example violates the hypothesis of Theorem \ref{th:sverak_G} because the limit set is not an open subset of the plane.
\begin{example}[Collapsing domains]
Consider an open rectangular domain in two dimensions
\[
R = (0,b_1) \times (0,b_2)\subset \B.
\]
For each $k \in \mathbb{N}$, define the collapsing domains $ 
R_k = I \times J_k = (0,b_1) \times (0, b_2/k),$
where $I = (0,b_1)$ and $J_k = (0,b_2/k)$. On $B_k$, the Dirichlet Laplacian has explicitly computable eigenvalues
\[
\lambda_{m,n}(R_k) = \pi^2 \left( \frac{m^2}{b_1^2} + \frac{n^2 k^2}{b_2^2} \right), \quad m,n \in \mathbb{N}.
\]

We identify the segment $I$ with $I\times\{0\}$. Since it is a subset of $R_k$ \[\sup_{x \in \overline{B}\setminus R_k} d(x,\overline{B}\setminus I)=0\quad  \text{and} \quad \sup_{y \in B\setminus I } d(\overline{B}\setminus R_k,y)\le b_2/k\quad \Rightarrow \lim_{k\to\infty}d_H(R_k,I)=0.\] 
On the other hand, the eigenvalues satisfy
\[
\lim_{k \to \infty} \lambda_{m,n}(R_k) = \infty \quad \text{for all } m,n \ge 1,
\]
while the Dirichlet Laplacian on the one dimensional limit set $I$ has finite eigenvalues $
\lambda_m = \frac{\pi^2 m^2}{b_1^2}.$
This case is implicitly excluded by our results in the previous section simply by requiring that the limiting set under Hausdorff convergence is an open set with non-empty interior. 
\end{example}

The next example provides another instance of eigenvalue non-convergence: the domains converge in the Hausdorff distance to a limit with non-empty interior, yet the eigenvalues fail to converge to those of the limit.  In this example the hypothesis of Theorem \ref{th:sverak_G} are not satisfied because the number of connected components of the complements tends to infinity.  To establish this example, we recall the definition of capacity and
the three properties we shall use.
\begin{definition} Let $E\subset D\subseteq\R^n$. Assume that the closure of $E$ is a compact subset of $D$. The capacity of $E$ in $D$ is
    \[
    \operatorname{cap}(E,D)
    =
    \inf\left\{
    \int_D |\nabla u|^2 \ 
    :\ 
    u\in \mathcal{U}_E
    \right\},
\]
where $\mathcal{U}_E$ is the set of all functions $u$ in the Sobolev space $H_0^1(D)$ such that
\[
u \geq 1
\quad\text{almost everywhere in an open neighborhood of } E.
\]
Under these assumptions, the distance from the boundary of $\overline{E}$ to the boundary of $D$ is positive and therefore the set $\mathcal U _E $ is non-empty. 
\end{definition}
We refer to \cite{cap1,cap2,cap3} for a detailed theory on capacities and the proof of the following properties.
\begin{proposition}
The capacity satisfies the following properties.
\begin{enumerate}
    \item {(Monotonicity).}
    Let $F\subseteq E\subseteq D\subseteq\mathbb{R}^n$. Then
    $
    \operatorname{cap}(F,D)\le \operatorname{cap}(F,E).
    $

    \item {(Sub-additivity).}
    Let $B_1,\ldots,B_r$ be domains in $\mathbb{R}^n$, and let
    \[
    E=\bigcup_{i=1}^r B_i\subseteq D, \quad \textrm{ then }
    \operatorname{cap}(E,D)\le
    \sum_{i=1}^{r}\operatorname{cap}(B_i,D).
    \]
    \item (Positivity in dimension two).
    Suppose that $n=2$. Then, for every smooth curve
    $E$ compactly contained in $D$,
    $
    \operatorname{cap}(E,D)>0.
    $
    Moreover, if $\operatorname{cap}(E,D)>0$, then
    $H_0^1(D\setminus E)\neq H_0^1(D).$
\end{enumerate}
\end{proposition}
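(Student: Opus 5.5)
We prove the three properties in the order stated, working directly from the variational definition of $\operatorname{cap}(E,D)$ as an infimum of Dirichlet energies over the admissible class $\mathcal U_E\subset H^1_0(D)$.

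\emph{Monotonicity.} The idea is that extension by zero enlarges the admissible class. Let $u\in\mathcal U_F$ relative to $E$, so $u\in H^1_0(E)$ and $u\ge1$ a.e.\ near $F$. Its zero extension $\tilde u$ lies in $H^1_0(D)$, has the same Dirichlet energy, and still satisfies $\tilde u\ge 1$ near $F$. Hence the admissible class for $(F,E)$ embeds into the one for $(F,D)$, and taking infima gives $\operatorname{cap}(F,D)\le\operatorname{cap}(F,E)$. (This is the same mechanism as Remark~\ref{rem:monotonicity}.)

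\emph{Sub-additivity.} Fix $\varepsilon>0$ and choose $u_i\in\mathcal U_{B_i}$ with $\int_D|\nabla u_i|^2\le\operatorname{cap}(B_i,D)+\varepsilon/r$. Truncating to $0\le u_i\le1$ does not increase the energy, so we may assume this. Set $u:=\max_i u_i$. Then $u\in H^1_0(D)$ by the lattice property of $H^1_0$, and $u\ge1$ on the union of the neighbourhoods, which is a neighbourhood of $E$. Moreover $|\nabla u|\le\max_i|\nabla u_i|$ a.e., since $\nabla u=\nabla u_i$ a.e.\ on the set where $u=u_i$. Therefore $|\nabla u|^2\le\sum_i|\nabla u_i|^2$, and integrating gives the claim after letting $\varepsilon\to0$.

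\emph{Positivity in two dimensions.} Let $E$ be a smooth curve compactly contained in $D$. It contains a nondegenerate segment $\sigma$ (after a local $C^1$ straightening), so by monotonicity in the set it suffices to show $\operatorname{cap}(\sigma,D)>0$. By the monotonicity proved above, $\operatorname{cap}(\sigma,D)\ge\operatorname{cap}(\sigma,R)$ for a large rectangle $R\supset D$. In $R$ we use the slicing argument. Every admissible $u$ satisfies $u\ge1$ on $\sigma$ and $u=0$ on $\partial R$. Along a.e.\ vertical line through $\sigma$, in the direction perpendicular to $\sigma$, the fundamental theorem of calculus together with Cauchy--Schwarz gives $\int|\partial_y u|^2\,dy\ge 1/L$, where $L$ is the side length of $R$. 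Integrating over the length $|\sigma|$ of the segment yields $\int|\nabla u|^2\ge|\sigma|/L>0$.

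For the second statement, suppose $H^1_0(D\setminus E)=H^1_0(D)$. Choose $\varphi\in C_c^\infty(D)$ with $\varphi\equiv1$ near $E$. Then $\varphi\in H^1_0(D\setminus E)$, so there are $\psi_k\in C_c^\infty(D\setminus E)$ with $\psi_k\to\varphi$ in $H^1$. The function $\varphi-\psi_k$ is not admissible as it stands, because it equals $1$ only on $E$ itself, not on a neighbourhood of $E$.

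This is the main obstacle. Showing that the quasi-continuous representative of a function in $H^1_0(D\setminus E)$ vanishes quasi-everywhere on $E$, and hence forcing $\operatorname{cap}(E,D)=0$, requires the fine theory of quasi-continuity. Here I would invoke the standard characterization in \cite{cap1,cap2}: $u\in H^1_0(D\setminus E)$ if and only if $\tilde u=0$ quasi-everywhere on $E$. Applying this to $\varphi$, which equals $1$ on $E$, gives $\operatorname{cap}(E,D)=0$, contradicting the assumption $\operatorname{cap}(E,D)>0$.
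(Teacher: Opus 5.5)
The paper does not prove this proposition; it only refers to the capacity literature for all three items. Your proposal is therefore a different, self-contained route. Monotonicity and sub-additivity come straight from the definition: zero extension for the first, and for the second $\max_i u_i\in H^1_0(D)$ with $|\nabla \max_i u_i|^2\le\sum_i|\nabla u_i|^2$. Positivity comes from an explicit slicing bound. The citation gives the paper brevity and full generality. Your argument gives transparency and a quantitative estimate. It would also settle $\operatorname{cap}(\Sigma,D)>0$ in Example~\ref{ball-pit-ex} without the Hausdorff-dimension result used there: slicing along horizontal lines gives $\operatorname{cap}(\Sigma,D)\ge 2$.

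Two points should be adjusted.

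\emph{The positivity step.} A smooth curve need not contain a segment, and after a local straightening $\sigma$ is no longer a subset of $E$, so monotonicity in the set does not apply. Nor does a local chart control the energy of an admissible $u$ outside the chart, since $u$ may be $\equiv 1$ on the whole chart. Instead, write a small arc of $E$ as a graph $\{(s,g(s)):s\in I\}$ in suitable Cartesian coordinates and slice along the lines $\{s\}\times\mathbb{R}$, $s\in I$. Each such line meets $E$, and your FTC/Cauchy--Schwarz estimate gives $\int|\nabla u|^2\ge |I|/L$ directly.

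\emph{The last claim.} The ``main obstacle'' you identify is illusory. Since $D\setminus E$ must be open, $E$ is compact. Each $\psi_k\in C_c^\infty(D\setminus E)$ has compact support disjoint from $E$, hence at positive distance from it, so $\psi_k$ vanishes on a neighbourhood of $E$. Then $\varphi-\psi_k\equiv1$ near $E$, so $\varphi-\psi_k\in\mathcal U_E$, and $\operatorname{cap}(E,D)\le\|\nabla(\varphi-\psi_k)\|_{L^2(D)}^2\to0$. The contrapositive closes in one line. The quasi-continuity characterization you invoke is correct, and only its easy direction would be needed, but it is unnecessary here.
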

It is also worth recording one explicit computation we shall need. For $\rho<R$ the capacity is 
     \[\operatorname{cap}(B_\rho,B_R) = \inf \int_{B_R\setminus\bar{B}_\rho} |\nabla u|^2 \]
     where we may assume that  $u|_{B_\rho}=1$ and $u|_{\partial B_R}=0$. The capacity is achieved by some $u$ which, being a critical point for the functional,
    must satisfy $\Delta u(r) = 0$ for $\rho < r < R$. By radial symmetry, the Laplace equation reduces to $(r u')' = 0$, and boundary conditions yield the unique minimizer 
    \[ u(r) = \begin{cases} \frac{\log(R/r)}{\log(R/\rho)}, & \rho \leq r \leq R, \\ 1, & 0\leq r < \rho. \end{cases} \]  
    Integrating  $|\nabla u|^2$ in polar coordinates gives
    \[\operatorname{cap}(B_\rho,B_R)=\int_{0}^{2\pi} \int_{\rho}^{R} \frac{1}{r \log^2(R/\rho)} dr d\theta = \frac{2\pi}{\log(R/\rho)}.\]

The point of the following lemma is to show that capacitary negligibility implies $\gamma$-convergence. 

\begin{lemma}
    Let $D$ be a bounded domain and $E_k\subset D$ a family of compact sets such that $\operatorname{cap}(E_k,D)\to 0$. Then
    $D\setminus E_k\xrightarrow{\gamma}D.$
    \label{capconv}
\end{lemma}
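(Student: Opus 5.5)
The plan is to verify $\gamma$-convergence through the Mosco characterization recalled in the proof of Theorem~\ref{prop:gammaG}, namely \cite[Thm.~4.5.4]{buttazzo}: $D\setminus E_k\gconv D$ if and only if $H^1_0(D\setminus E_k)\Mconv H^1_0(D)$ inside $H^1_0(B)$, where we take $B\supset D$ (or simply work in $H^1_0(D)$). Condition (M2) is immediate. Since $H^1_0(D\setminus E_k)\subset H^1_0(D)$ and $H^1_0(D)$ is a closed subspace, hence weakly closed, every weak limit of elements $u_{k_h}\in H^1_0(D\setminus E_{k_h})$ lies in $H^1_0(D)$. All the work is therefore in (M1): for each $u\in H^1_0(D)$ we must produce $u_k\in H^1_0(D\setminus E_k)$ with $u_k\to u$ strongly.

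By density and a diagonal argument, it suffices to treat $u\in C_c^\infty(D)$. Given $\operatorname{cap}(E_k,D)\to0$, the definition provides $w_k\in H^1_0(D)$ with $w_k\ge1$ a.e. in an open neighbourhood of $E_k$ and $\int_D|\nabla w_k|^2\le \operatorname{cap}(E_k,D)+1/k\to0$. Truncating to $\min(\max(w_k,0),1)$ keeps the $H^1_0$-membership, does not increase the Dirichlet energy, and gives $0\le w_k\le1$ with $w_k=1$ near $E_k$. By Poincaré on the bounded domain $D$, $w_k\to0$ in $H^1_0(D)$, in particular in $L^2$. I would then set $u_k:=u(1-w_k)$. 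This function vanishes on an open neighbourhood of $E_k$ and lies in $H^1_0(D)$, so $u_k\in H^1_0(D\setminus E_k)$. The justification is to approximate $1-w_k$ and note that a function of $H^1_0(D)$ vanishing a.e. on a neighbourhood of the compact set $E_k$ belongs to $H^1_0(D\setminus E_k)$: cut off with a smooth function equal to $1$ off that neighbourhood and mollify.

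It remains to check that $u_k\to u$ in $H^1$. We have $\nabla(u-u_k)=w_k\nabla u+u\nabla w_k$, and since $u,\nabla u\in L^\infty$,
\[
\|\nabla(u-u_k)\|_{L^2}\le \|\nabla u\|_\infty\|w_k\|_{L^2}+\|u\|_\infty\|\nabla w_k\|_{L^2}\to0 .
\]
The $L^2$ part is handled the same way. For general $u\in H^1_0(D)$, we choose $\varphi_m\in C_c^\infty(D)$ with $\varphi_m\to u$ and apply a diagonal selection, which gives (M1). Mosco convergence then yields $D\setminus E_k\gconv D$. Alternatively, one can argue directly on resolvents: the solutions $u_k^f$ are bounded in $H^1_0(D)$, any weak limit solves the problem on $D$ because test functions $\varphi(1-w_k)$ are admissible and converge to $\varphi$, and compactness of the embedding gives strong $L^2$ convergence.

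The main obstacle is the membership $u(1-w_k)\in H^1_0(D\setminus E_k)$ when $E_k$ is irregular. This is why the definition's requirement that $w_k\ge1$ hold on an open neighbourhood of $E_k$, not merely on $E_k$ quasi-everywhere, is important, and I would exploit it explicitly in the cutoff argument.
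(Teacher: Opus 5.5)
Your proposal is correct and follows the paper's proof essentially step by step. You establish Mosco convergence $H^1_0(D\setminus E_k)\to H^1_0(D)$, with (M2) trivial and (M1) obtained from $u_k=u(1-w_k)$, where $w_k$ is a truncated near-optimal capacitary function equal to $1$ on an open neighbourhood of $E_k$. There are two minor differences. You take $u\in C_c^\infty(D)$, so $\|w_k\nabla u\|_{L^2}\le\|\nabla u\|_\infty\|w_k\|_{L^2}$ follows at once, whereas the paper works with $u\in H^1_0(D)\cap L^\infty(D)$ and needs an extra approximation of $\nabla u$ by a smooth field. You also spell out the cutoff argument for $u(1-w_k)\in H^1_0(D\setminus E_k)$, which the paper only asserts.
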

\begin{proof*}
    Equivalently we show Mosco convergence of the function space
    $H_0^1(D\setminus E_k)\xrightarrow{M}H_0^1(D)$.
    Observe that $(M2)$ is trivial: the ambient space $X=H_0^1(D)$, and any $H_0^1(D\setminus E_k)$ with $D\setminus E_k\subset D$ is a subspace of $H_0^1(D)$ by extension to zero. Thus, if $u_{n_k}\in H_0^1(D\setminus E_k)\subset H_0^1(D)$ and $u_{n_k}\rightharpoonup u$ in $H_{0}^1(D)$ then trivially $u\in H_0^1(D)$. We now show (M1).
Since $E_k$ is a compact subset of the open set $D$, $\operatorname{dist}(E_k,\partial D)>0$ and the class $\mathcal U_{E_k}$ is
non-empty. We may choose
$v_k\in\mathcal U_{E_k}$ with
\[
\int_D|\nabla v_k|^2\ \le\ \operatorname{cap}(E_k,D)+\tfrac1k ,
\]
and an open set $\omega_k$ with $E_k\subset\omega_k\subset D$ and $v_k\ge 1$ a.e.\ in $\omega_k$.
Set $w_k:=\min\{|v_k|,1\}\in H^1_0(D).$

Then $0\le w_k\le 1$, $w_k=1$ a.e.\ in $\omega_k$, and since $|\nabla w_k|\le|\nabla v_k|\text{ a.e.}$
we have
\begin{equation}
   \int_D|\nabla w_k|^2\ \le\ \operatorname{cap}(E_k,D)+\tfrac1k\ \longrightarrow\ 0 .
   \label{eq: tozero}
\end{equation}
By the Poincar\'e inequality, $\|w_k\|_{L^2(D)}\to 0$ as well. Next, we build a sequence of functions in $H_0^1(D)$ as follows.
Let $u\in H^1_0(D)\cap L^\infty(D)$ and set
\[
u_k:=u\,(1-w_k)\ \in\ H^1_0(D)\cap L^\infty(D)
\] 
Since $w_k=1$ a.e.\ in $\omega_k$, we have
$u_k=0$ a.e.\ on the open set $\omega_k\supset E_k$, and this suffices to conclude $u_k\in H^1_0(D\setminus E_k).$
Finally,
\[
\|\nabla u_k-\nabla u\|_{L^2(D)}
\ \le\ \|w_k\nabla u\|_{L^2(D)}+\|u\|_{L^\infty(D)}\|\nabla w_k\|_{L^2(D)}.
\]
The second term tends to zero by \eqref{eq: tozero}.  For the first term, note that $\|w_k\|_{L^2(D)}\to0$ alone is not
enough, since $|\nabla u|^2$ is merely integrable thus we cannot apply the Cauchy-Schwarz inequality. We use $0\le w_k\le1$ together with the density of
$C^\infty_c(D;\mathbb R^n)$ in $L^2(D;\mathbb R^n)$. Given $\varepsilon>0$, pick $g\in C^\infty_c(D;\mathbb R^n)$
with $\|\nabla u-g\|_{L^2(D)}<\varepsilon$; then
\[
\|w_k\nabla u\|_{L^2}\ \le\ \|w_k(\nabla u-g)\|_{L^2}+\|w_k\,g\|_{L^2}
\ \le\ \varepsilon+\|g\|_{L^\infty}\|w_k\|_{L^2}.
\]
Therefore
$\|\nabla u_k-\nabla u\|_{L^2(D)}\to0$, and thus \ $u_k\to u$ strongly in $H^1_0(D)$. Consequently, any $u \in H^1 _0 (D) \cap L^\infty(D)$ can be obtained as a limit of elements in $H_0 ^1 (D \setminus E_k)$.  Since $H_0 ^1(D) \cap L^\infty(D)$ is dense in $H^1 _0 (D)$, the same holds for a generic $u \in H_0 ^1 (D)$ by a standard density argument.
\qed
\end{proof*}

Having established these results for the capacity, we now give an example of a sequence of domains that are Hausdorff convergent to a domain with non-empty interior, but not $\gamma$-convergent.  
\begin{example}[The Ball Pit]
\label{ball-pit-ex}

\begin{figure}[b!]
	\centering
	\includegraphics[width=1\textwidth]{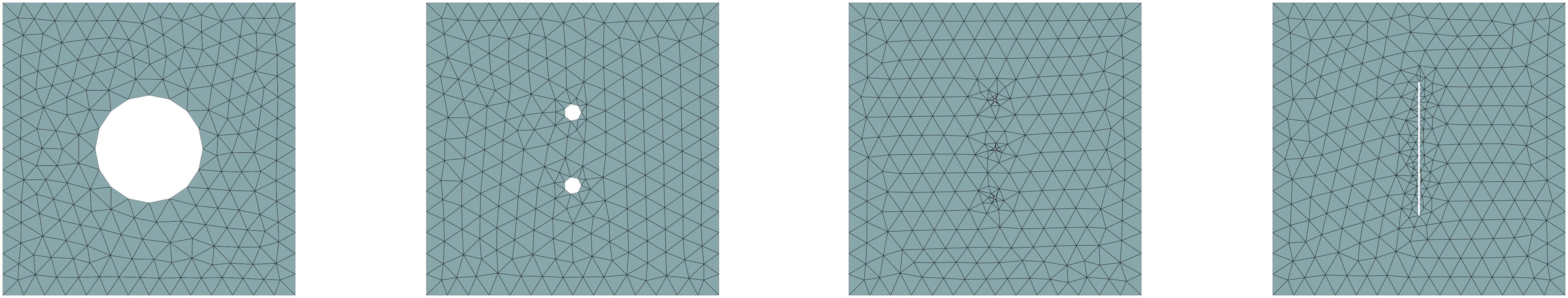}
	\caption{The first three iterations of the Ball Pit followed by their limit set.}
	\label{ball-pit}
\end{figure}

Consider $D=(-1,1)\times(-1,1)$ and $\Sigma=\{0\}\times[-1/2,1/2]$. Let $\Omega=D\setminus\Sigma$. We define for each step $k\geq 1$
\[y_i^k=-\frac{1}{2}+\frac{i-\frac{1}{2}}{k}, \qquad i=1,\dots,k, \qquad x_i^k=(0,y_i^k).\]
The points $x_i^k$ are equidistant points on the segment $\Sigma$ with distance $\varepsilon_k=1/k$ between adjacent points. We define also
\[B_i^k=B(x_i^k,\rho_k), \qquad \rho_k=e^{-k^2}, \qquad \Xi_k=\bigcup_{i=1}^k \overline{B_i^k}, \qquad \Omega_k=D\setminus \Xi_k.\]
Since $2\rho_k<\epsilon_k$ for any $k \geq 1$, $B_i^k\cap B_j^k=\emptyset$ for all $i\neq j$. Observe that
\[\sup_{p\in\Xi_k}d(p,\Sigma)\le 2\rho_k\to0 \qquad\text{and}\qquad \sup_{q\in\Sigma} d(\Xi_k,q)\le \frac{\varepsilon_k}{2}\to0, \textrm{ as } k \to \infty \]
hence $\Omega_k$ Hausdorff converges to $\Omega$.

On the other hand, $\Xi_k$ is the union of $k$ disjoint connected components. Thus the hypotheses of Šverák's theorem are violated. We want to show that in this case there is at least one $j\in \mathbb N$ such that the eigenvalues of the Laplace operator
\[\lambda_j(\Omega_k)\not\rightarrow\lambda_j(\Omega).\] 

    The key idea is that the sequence $\Xi_k$ is capacitarily negligible in the limit, while its Hausdorff limit $\Sigma$ is not:
\[
\operatorname{cap}(\Xi_k,D)\to 0,
\qquad
\operatorname{cap}(\Sigma,D)>0.
\]  
    Since $B(x_i^k,\frac{1}{4})\subset D$, by the monotonicity of the capacity \[\operatorname{cap}\left(B_i^k,D\right)\le \operatorname{cap}\left( B_i^k,B_i\left(x_i^k,\frac{1}{4}\right)\right)= \frac{2\pi}{\log(1/(4\rho_k))} = \frac{2\pi}{k^2 - \log 4}.\]
    
    By sub-additivity of the capacity
    \[\operatorname{cap}(\Xi_k,D)\le \sum_{i=1}^k\operatorname{cap}(B_i^k,D)\le\frac{2\pi k}{k^2-\log4}\xrightarrow{k\to\infty}0.\]
    Hence, by Lemma \ref{capconv}
    \[\Omega_k\xrightarrow{\gamma}D.\]
    But 
    $\operatorname{cap}(\Sigma,D)>0$ by \cite[Theorem 4.17]{evans} because the Hausdorff dimension of $\Sigma$ is one. 
    Thus 
    \[H_0^1(\Omega_k)\xrightarrow{M}H_0^1(D)\neq H_0^1(\Omega)=H_0^1(D\setminus \Sigma),\]
    which means 
    \[
      \Omega_k\not\xrightarrow{\gamma}\Omega.
    \]
So, each $\lambda_j(\Omega_k)$ converges to $\lambda_j(D)$. In principle by some strange miracle it could still be the case that $D$ and $\Omega$ are isospectral.  They are not, which we can see through the short time asymptotic behavior of the heat trace.  Each domain admits a two-term asymptotic expansion.
\[\operatorname{Tr}(e^{-t\Delta_{D}})=\frac{\textrm{Area of } D}{4\pi t}-\frac{\textrm{Perimeter of } D}{8\sqrt{\pi t}}+O(1),\] 
    \[\operatorname{Tr}(e^{-t\Delta_{\Omega}})=\frac{\textrm{Area of } \Omega}{4\pi t}-\frac{\textrm{Perimeter of } \Omega}{8\sqrt{\pi t}}+O(1),\]
    as $t \to 0^+$. The areas of $D$ and $\Omega$ are equal, but their perimeters differ by twice the length of $\Sigma$, which is two.
    Consequently, these heat traces, which are spectral invariants, are not equal, and therefore at least some eigenvalues $\lambda_j(D)$ must differ from $\lambda_j(\Omega)$.   
    We conclude that for at least one $j \in \mathbb N$ the sequence of eigenvalues
    \[\lambda_j(\Omega_k)\not\to\lambda_j(\Omega).\]
\end{example}

\subsection{Continuity of eigenvalues and the rate of convergence for the GMRES method} 
\label{ss:cont_rate_cgn} 

We shall now apply our results for the continuity of eigenvalues of uniformly elliptic divergence form operators, including the Dirichlet Laplacian, under geometric deformations to obtain continuity of the rate of convergence for the GMRES method.

Let $\varepsilon>0$ and consider a one-parameter family of open subsets of a bounded domain $B$,
\[
t \longmapsto \Omega_t, \qquad t \in (-\varepsilon,\varepsilon), \qquad \Omega_0 = \Omega,
\]
which is continuous with respect to the $\gamma_G$-topology. More precisely, we assume that for every $t\in(-\varepsilon,\varepsilon)$ and every sequence $\{t_n\}_{n\in\mathbb{N}}$ converging to $t$, the corresponding sequence of domains satisfies
\[
\Omega_{t_n} \xrightarrow{\gamma_G} \Omega_t .
\]
We note that $\gamma$-convergence corresponds to the case when $G$ is the identity matrix. Our goal is to study the continuity, with respect to the deformation parameter $t$, of the spectral functional
\beq \label{eq:qk_function}
q_k(\Omega_t)
=
\prod_{j=1}^{k}
\frac{1}{\sqrt{ 1 + \, \tilde K \ \lambda_j(\Omega_t)}}.
\eeq 
Here $\lambda_j(\Omega_t)=\lambda^G_j(\Omega_t)$ denotes the $j$-th eigenvalue of the uniformly elliptic divergence form operator $-\ddiv(G\nabla)$ on $\Omega_t$, as defined in \eqref{G_sym}, and $\tilde K=K_0/K_1^2>0$ where we take
\[K_0=2, \quad K_1:=\|\textbf{w}\|_{L^\infty(B)}/\sqrt m.\]
We take $G$ and $\textbf{w}$ defined on all of $B$, thus the constant $\tilde K$ may be chosen independently of $t$, so that the values in \eqref{eq:qk_function} can be compared for different $t$.  As discussed in \S \ref{s:cgn_est1} and \S \ref{s:cgn_est2}, this quantity bounds the rate of convergence of the GMRES method.
As usual, any function $u \in H_0^1(\Omega_t)$ is extended by zero to the whole set $B$, so that
$H_0^1(\Omega_t)$ can be regarded as a closed subspace of $H_0^1(B)$.
Since $\Omega_t \subset B$ for all $t$, we have the continuous embedding
\[
H_0^1(\Omega_t) \subset H_0^1(B),
\]
which immediately yields, by the min--max principle, the monotonicity estimate
\[
\lambda_j(B) \le \lambda_j(\Omega_t),
\qquad \text{for all } j \in \mathbb{N}
\text{ and all } t \in (-\varepsilon,\varepsilon).
\]
In particular, the family $\{\lambda_j(\Omega_t)\}_{t}$ is uniformly bounded from below by the corresponding eigenvalue of the fixed domain $B$.

\begin{theorem} \label{prop:qk}
Let $\{\Omega_t\}_{t\in(-\varepsilon,\varepsilon)}$ be a $\gamma_G$-continuous family of open subsets of $B$.
Then the map
\[
t \longmapsto \bigl(q_k(\Omega_t)\bigr)_{k\in\mathbb{N}}
\]
is continuous from $(-\varepsilon,\varepsilon)$ into $\ell^\infty(\mathbb{N})$, and hence uniformly continuous on every compact subinterval of $(-\varepsilon,\varepsilon)$.
\end{theorem}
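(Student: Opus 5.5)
The plan is to combine the pointwise continuity of each eigenvalue, given by Theorem~\ref{prop:eigG}, with a uniform smallness of the tail of the sequence $q_k$. Fix $t\in(-\varepsilon,\varepsilon)$ and a sequence $t_n\to t$. By hypothesis $\Omega_{t_n}\gGconv\Omega_t$, so Theorem~\ref{prop:eigG} gives $\lambda_j(\Omega_{t_n})\to\lambda_j(\Omega_t)$ for every fixed $j$. Since each factor $x\mapsto(1+\tilde K x)^{-1/2}$ is continuous, for every fixed $k$ the finite product satisfies $q_k(\Omega_{t_n})\to q_k(\Omega_t)$. The only remaining issue is uniformity in $k$, which is exactly what convergence in $\ell^\infty(\mathbb{N})$ requires.

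For the uniformity I will use the monotonicity bound $\lambda_j(\Omega_s)\ge\lambda_j(B)$ recorded just before the theorem. It holds for all $s$, and for the operator $L_G$ as well, via the min--max principle applied to $a_G$. This gives the uniform tail estimate
\[
0< q_k(\Omega_s)\le \prod_{j=1}^k \frac{1}{\sqrt{1+\tilde K\lambda_j(B)}}=:Q_k \qquad \text{for all } s .
\]
Since $\lambda_j(B)\to\infty$, the factors tend to $0$, so $Q_k\to 0$. In fact $Q_k$ is nonincreasing because every factor is $<1$.

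Given $\eta>0$, choose $k_0$ with $Q_{k}\le\eta/2$ for all $k\ge k_0$. Then for $k\ge k_0$ we have $|q_k(\Omega_{t_n})-q_k(\Omega_t)|\le 2Q_{k}\le\eta$ for every $n$. For the finitely many indices $k<k_0$, the pointwise convergence above yields $|q_k(\Omega_{t_n})-q_k(\Omega_t)|<\eta$ once $n$ is large. Hence $\sup_k|q_k(\Omega_{t_n})-q_k(\Omega_t)|\le\eta$ for $n$ large. Since $t$ and $t_n$ were arbitrary, this gives sequential continuity, and therefore continuity into $\ell^\infty$. Uniform continuity on compact subintervals then follows from the Heine--Cantor theorem for continuous maps from a compact metric space into a metric space.

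The main obstacle is the uniform-in-$k$ control, since pointwise eigenvalue convergence alone cannot give $\ell^\infty$ continuity. The obstacle is resolved by the domain-independent lower bound $\lambda_j(\Omega_s)\ge\lambda_j(B)$ together with $\lambda_j(B)\to\infty$. One point to check is that $\tilde K$ is independent of $s$: this holds because $G$ and $\textbf{w}$ are defined on all of $B$.
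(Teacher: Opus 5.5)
Your proof is correct and matches the paper's argument: termwise convergence of $q_k$ from Theorem~\ref{prop:eigG}, a tail bound $|q_k(\Omega_t)-q_k(\Omega_s)|\le 2q_k(B)\to 0$ that is independent of $t,s$ and comes from domain monotonicity, and Heine--Cantor for uniform continuity on compact subintervals. The only difference is cosmetic: you work with sequences $t_n\to t$, while the paper uses an $\varepsilon$--$\delta$ formulation, and the two are equivalent here.
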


\begin{proof*}
Termwise continuity, namely
\[
\forall \,\delta>0\ \exists\,\sigma>0 \ \text{such that } 
|t-s|<\sigma
\ \Longrightarrow\
|q_k(\Omega_t)-q_k(\Omega_{s})|<\delta,
\]
for every fixed $k$, is a direct consequence of the $\gamma_G$ continuity, of Theorem~\ref{prop:eigG} and of the continuity of the function
\[
x \longmapsto 
	\frac{1}{\sqrt{  1 + \, \tilde K \ x }}\,  .
\]
Here, however, our aim is to prove continuity uniform with respect to the index $k$, that is, to fix $t\in(-\varepsilon,\varepsilon)$ and show that
\[
\forall \,\delta>0\ \exists\,\sigma>0 \ \text{such that }
|t-s|<\sigma
\ \Longrightarrow\
\sup_{k\in\mathbb{N}}
\bigl|q_k(\Omega_t)-q_k(\Omega_s)\bigr|
<\delta .
\]
We first control the contribution of large indices. For any $t,s$ and any $k\in\mathbb{N}$, we observe that
\[
\bigl|q_k(\Omega_t)-q_k(\Omega_s)\bigr|
\le q_k(\Omega_t)+q_k(\Omega_s)
\le 2\,q_k(B),
\]
because $\lambda_j(B)\le\lambda_j(\Omega_t)$ for every $j$. Moreover, all the factors in \eqref{eq:qk_function} belong to $(0,1]$, so that $q_k(B)$ is non-increasing in $k$ and
\[
q_k(B)=
\prod_{j=1}^k 
\frac{1}{\sqrt{ 1 + \, \tilde K \ \lambda_j(B)}}  \le \frac{1}{\sqrt{ 1 + \, \tilde K \ \lambda_k(B)}}
\xrightarrow{k\to\infty}0,
\]
since $\lambda_k(B)\to +\infty$ as $k\to\infty$.
Fix now $\delta>0$. There exists $k_0\in\mathbb{N}$ such that
\[
q_{k_0}(B)<\frac{\delta}{4},
\]
and therefore, for every $t$ and $s$,
\[
\sup_{k\ge k_0}
\bigl|q_k(\Omega_t)-q_k(\Omega_s)\bigr|
\le
\sup_{k\ge k_0} 2\,q_k(B)
=
2\,q_{k_0}(B)<
\frac{\delta}{2}.
\]
We point out that this is independent of $t,s$.
We now consider the remaining finite set of indices $k=1,\dots,k_0-1$.
For each of such $k$, the map $t\mapsto q_k(\Omega_t)$ is continuous, and a maximum of finitely many continuous functions is continuous, so there exists $\sigma>0$ such that
\[
|t-s|<\sigma
\ \Longrightarrow\
\max_{k< k_0}
\bigl|q_k(\Omega_t)-q_k(\Omega_s)\bigr|
< \frac{\delta}{2}.
\]
Combining the estimates on the finite part and on the tail, we obtain
\[ |t-s| < \sigma \implies 
\sup_{k\in\mathbb{N}}
\bigl|q_k(\Omega_t)-q_k(\Omega_s)\bigr|
< \delta ,
\]
which is the desired continuity in $t$. The last assertion follows from the Heine-Cantor theorem. In particular $q_k(\Omega_t)$ is continuous at $t=0$, uniformly in $k$.
\qed
\end{proof*}

As a corollary, this shows that the estimated rate of convergence for the GMRES method, controlled by the spectral function $q_k$, is continuous under continuous geometric deformations of domains in the Hausdorff topology. 

\begin{corollary}\label{cor:qk_hausdorff}
    Let $n=2$ and let $\{\Omega_t\}_{t\in(-\varepsilon,\varepsilon)}$ be a family of open subsets of $B \subseteq \R^2$.  Let $p \in \mathbb{N}$ be fixed and assume that the complements $\Omega_t^c$ in $B$ have at most $p$ connected components.  
    If $\Omega_t$ converges to an open set $\Omega_0$ with respect to the Hausdorff distance, as $t \to 0$, then $q_k(\Omega_t) \to q_k(\Omega_0)$ as $t\to 0$, uniformly for all $k$. Here $q_k(\Omega_t)$ is defined in \eqref{eq:qk_function}.
\end{corollary}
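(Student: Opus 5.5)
The plan is to reduce the corollary to the tail-plus-finite-part argument in the proof of Theorem~\ref{prop:qk}, replacing $\gamma_G$-continuity of the family by sequential convergence at $t=0$. Since convergence as $t\to0$ in a metric setting is equivalent to convergence along every sequence $t_n\to0$, fix an arbitrary sequence $t_n\to 0$. Then $\Omega_{t_n}\to\Omega_0$ in the Hausdorff distance, with the number of connected components of the complements bounded by $p$. Theorem~\ref{th:sverak_G} gives $\Omega_{t_n}\gGconv\Omega_0$, and Theorem~\ref{prop:eigG} gives $\lambda^G_j(\Omega_{t_n})\to\lambda^G_j(\Omega_0)$ for every fixed $j$. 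For $G=\mathrm I$ one uses Theorem~\ref{sverak} and Theorem~\ref{prop1} directly. Since $x\mapsto(1+\tilde K x)^{-1/2}$ is continuous on $[0,\infty)$ and $q_k$ is a finite product, we get $q_k(\Omega_{t_n})\to q_k(\Omega_0)$ for each fixed $k$. Here $\tilde K$ does not depend on $t$, because $G$ and $\mathbf w$ are defined on all of $B$.

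For uniformity in $k$, repeat the tail estimate. By domain monotonicity (the min--max principle for $a_{G,\Omega}$ with $H^1_0(\Omega_t)\subset H^1_0(B)$), every factor of $q_k(\Omega_t)$ lies in $(0,1]$ and is bounded by the corresponding factor for $B$. Hence
\[
|q_k(\Omega_t)-q_k(\Omega_0)|\le 2q_k(B)\le \frac{2}{\sqrt{1+\tilde K\lambda^G_k(B)}}\to0 .
\]
Given $\delta>0$, choose $k_0$ with $2q_{k_0}(B)<\delta/2$. This controls all $k\ge k_0$ uniformly in $t$. For the finitely many $k<k_0$, termwise convergence gives an $n_0$ such that $\max_{k<k_0}|q_k(\Omega_{t_n})-q_k(\Omega_0)|<\delta/2$ for $n\ge n_0$. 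Thus $\sup_k|q_k(\Omega_{t_n})-q_k(\Omega_0)|<\delta$ for $n\ge n_0$. Since the sequence $t_n\to 0$ was arbitrary, the convergence holds as $t\to0$, uniformly in $k$.

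The only delicate point is the first step: checking that the hypotheses of Theorem~\ref{th:sverak_G} are met along every sequence. This needs the limit $\Omega_0$ to be open, which is assumed, and the uniform bound $p$ on the components, which is also assumed. The family need not be $\gamma_G$-continuous at $t\neq0$, so Theorem~\ref{prop:qk} cannot be quoted verbatim. Its proof, however, only uses sequential convergence at the point in question, and that is exactly what the argument above supplies.
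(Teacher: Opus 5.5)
Your proposal is correct and follows the same approach as the paper. The paper's proof applies Theorem~\ref{th:sverak_G} to obtain $\gamma_G$-convergence at $t=0$, then reuses the tail-plus-finite-part argument of Theorem~\ref{prop:qk}, noting that it only needs convergence at the point under consideration. Your reduction to arbitrary sequences $t_n\to0$ just makes explicit a step the paper leaves implicit, since \v{S}ver\'ak's theorem is stated for sequences.
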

\begin{proof*} 
The family of domains satisfies the assumptions of Theorem \ref{th:sverak_G}, so $\Omega_t\gGconv\Omega_0$ as $t\to0$.  The conclusion then follows from the proof of Theorem \ref{prop:qk}, which only uses the $\gamma_G$-convergence at the point under consideration.
\qed
\end{proof*}

\section{Continuous domain dependence of Laplace eigenvalues on the discrete level}
\label{seccdddiscr}

This section is devoted to theoretical results for the continuous dependence of  eigenvalues in the discrete case. We establish two results, one for the case of transformed polygons and one for the approximation of a fixed domain.

\subsection{Continuity of discrete eigenvalues for a smoothly transformed fixed FE subspace}
 \label{sec1}

Let $\Omega \subset \er^2$
be a bounded polygonal domain, and $t_0>0$. Let us consider a family of bounded domains  
$$
\{ \Omega_t\}_{t\in [0, t_0]} \subset \er^2, \qquad \mbox{where} \quad \Omega_0 =: \Omega ,
$$
with the following property:

\sm 

\noindent
{\bf Assumption \ref{sec1}.} 
For any $t\in [0, t_0]$  there exists a diffeomorphism $F_t = I + G_t \, : \overline\Omega\to \overline\Omega_t$, 
where $I$ denotes the identity mapping on $\overline\Omega$, such that, with the notation from \eqref{Gknorms}, 
\begin{equation}
\label{ftcont} 
\|G_t\|_{C^1} := \|G_t\|_{\infty}+
  \|G_t'\|_{\infty}
\to 0
\quad \mbox{as } \, t\to 0.
\end{equation}

\bi

\noindent

In the above setting Theorem \ref{cor:eig}
implies the continuity of the eigenvalues on the operator level. 
Now we derive a discrete analogue of this property.

 Let $V_h\subset \he$ be a given finite dimensional FE subspace with a basis $\vp_1, \dots, \vp_n$. For any  $t\in [0, t_0]$ we define
\begin{equation}
\label{vhtrans}
V_h^t := V_h\circ {F_t}\iv := \{ u_h^t:= u_h \circ {F_t}\iv|\, \ u_h\in V_h  \} \ \subset \het.
\end{equation}
Then  $\vp_1^t, \dots, \vp_n^t$ is a basis in $V_h^t$.
With the notations of 
\eqref{shmhomx}, we let $\bbs $ and $\bbq $ be the stiffness and mass matrices w.r.t. $\Omega_t$, \resp :
$$
{\bigl( \bbs \bigr)}_{i,j}:= \int_{\Omega_t} \nb \vp_i^t\cdot\nb \vp_j^t, \qquad 
{\bigl( \bbq \bigr)}_{i,j}:= \int_{\Omega_t}  \vp_i^t\, \vp_j^t
\qquad (t\in [0, t_0]).
$$
If $\cb\in \er^n$ and $u_h^t:= \sum\limits_{j=1}^n c_j \vp_j^t$, then \
 $$
\bbs  \cb\cdot \cb = \int_{\Omega_t} |\nb u_h^t|^2, \quad 
\bbq  \cb\cdot \cb = \int_{\Omega_t}  {(u_h^t)}^2, 
$$ 
hence the Rayleigh quotients of $\bbs\iv \bbq $ in the $\bbs $-inner product satisfy
\begin{equation}
\label{Rayleigh}
\frac{\bigl\la  \bbs  \iv \bbq \cb, \, \cb\bigr\ra_{\bbs }}{|\cb|^2_{\bbs }} = 
\frac{\bbq  \cb\cdot \cb}{\bbs  \cb\cdot \cb}
=  \frac{\int_{\Omega_t} {(u_h^t)}^2}{\int_{\Omega_t} |\nb u_h^t|^2}\, .
\end{equation}
Let us use the notations
$$
\lambda_j^{h}(\Omega_t):= \lambda_j\bigl(\,  \bbq  \iv \bbs \, \bigr) 
$$
in accordance with \eqref{disceigsqmtx}.

\begin{theorem}
\label{fixed-fe}
	Let Assumption \ref{sec1} hold. Then for any fixed $j=1,...,n$ the $j$th eigenvalues satisfy
	$$
	\lambda_j^{h}(\Omega_t) \rightarrow \lambda_j^{h}(\Omega) \qquad \mbox{as } t\to 0.
	$$
\end{theorem}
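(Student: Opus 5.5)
The plan is to pull everything back to the fixed reference domain $\Omega$ by the change of variables $x = F_t(y)$. Then the Rayleigh quotient \eqref{Rayleigh} on $V_h^t$ becomes a perturbed Rayleigh quotient on the fixed space $V_h$, and I will show that the numerator and denominator are uniformly close to their values at $t=0$.

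For $u_h\in V_h$ and $u_h^t = u_h\circ F_t^{-1}$, the chain rule gives $\nabla u_h^t(F_t(y)) = (F_t'(y))^{-T}\nabla u_h(y)$ with $F_t' = I + G_t'$. Hence
\[
\int_{\Omega_t}|\nabla u_h^t|^2 = \int_\Omega \bigl|(I+G_t')^{-T}\nabla u_h\bigr|^2\,|\det(I+G_t')|\,dy,
\qquad
\int_{\Omega_t}(u_h^t)^2 = \int_\Omega u_h^2\,|\det(I+G_t')|\,dy .
\]
Set $\delta_t := \|G_t'\|_\infty$, which tends to $0$ by \eqref{ftcont}; for small $t$ we have $\delta_t<1$. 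Standard Neumann series and determinant estimates then give pointwise bounds
\[
(1-\delta_t)^2 \le |\det(I+G_t')| \le (1+\delta_t)^2,
\qquad
\frac{1}{1+\delta_t}|\xi| \le |(I+G_t')^{-T}\xi| \le \frac{1}{1-\delta_t}|\xi| .
\]
For $n=2$ the determinant bound can also be checked directly. Combining these, there are constants $a_t,b_t\to 1$ as $t\to 0$, independent of $u_h$, such that
\[
a_t\int_\Omega|\nabla u_h|^2 \le \int_{\Omega_t}|\nabla u_h^t|^2 \le b_t\int_\Omega|\nabla u_h|^2 ,
\]
and similarly $a_t'\int_\Omega u_h^2 \le \int_{\Omega_t}(u_h^t)^2 \le b_t'\int_\Omega u_h^2$ with $a_t',b_t'\to1$.

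Next I apply the Courant--Fischer min--max principle on the $n$-dimensional spaces $V_h$ and $V_h^t$. The map $u_h\mapsto u_h^t$ is a linear bijection $V_h\to V_h^t$, since it is the same coefficient vector $\cb$ in the bases $\vp_j$ and $\vp_j^t$. Therefore $j$-dimensional subspaces correspond one-to-one, and the quotients compare pointwise:
\[
\frac{a_t}{b_t'}\,\frac{\int_\Omega|\nabla u_h|^2}{\int_\Omega u_h^2}\le \frac{\int_{\Omega_t}|\nabla u_h^t|^2}{\int_{\Omega_t}(u_h^t)^2}\le \frac{b_t}{a_t'}\,\frac{\int_\Omega|\nabla u_h|^2}{\int_\Omega u_h^2}.
\]
Taking min over $j$-dimensional subspaces of the max over each subspace gives
\[
\frac{a_t}{b_t'}\lambda_j^h(\Omega)\le\lambda_j^h(\Omega_t)\le\frac{b_t}{a_t'}\lambda_j^h(\Omega).
\]
Since both constant ratios tend to $1$, the convergence follows. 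The same argument works with $\bbs^{-1}\bbq$ via \eqref{Rayleigh}, by taking reciprocals.

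The main technical point is the uniform pointwise comparison of the pulled-back integrands. This needs only $\|G_t'\|_\infty\to0$ and the fact that $F_t$ is a diffeomorphism, so that the change of variables formula is valid. I would also verify that $V_h^t\subset H^1_0(\Omega_t)$, which holds because $F_t$ is bi-Lipschitz and maps $\partial\Omega$ onto $\partial\Omega_t$ by \eqref{eq:bdry}.
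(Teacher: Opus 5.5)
Your proof is correct and follows the same route as the paper. You pull the Rayleigh quotients back to $\Omega$ via $F_t$, bound the perturbation of numerator and denominator uniformly in $u_h\in V_h$ using only $\|G_t'\|_\infty\to 0$, and pass this through the min--max over corresponding $j$-dimensional subspaces. The one difference is that you use two-sided multiplicative bounds and work with $\lambda_j^h$ directly rather than with $1/\lambda_j^h$. This gives $\lambda_j^h(\Omega_t)/\lambda_j^h(\Omega)\to 1$ uniformly in $j$ and $V_h$, without the Poincar\'e--Friedrichs constant $C_\Omega$ or the lower bound \eqref{Dtbecslow} that the paper's additive estimate requires.
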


\begin{proof*}  Let $j\in\mathbb{N}^+$ be fixed. Using the minimax characterization for the Rayleigh quotients \eqref{Rayleigh},   the discrete inverse  Laplace eigenvalues on $\Omega_t$ satisfy
$$
\begin{array}{l}
\displaystyle
\frac{1}{\lambda_j^{h}(\Omega_t)}=\frac{1}{\lambda_j\bigl(\, \bbq \iv \bbs \, \bigr)}=
\lambda_j\bigl(  \bbs \iv \bbq \, \bigr)
\\[4mm]
\displaystyle = \  \max\limits_{V_j^t\subset V_h^t\atop
	\dim V_j^t=j} \  \min\limits_{u_h^t \in V_j^t \atop u_h^t\neq 0  }   \frac{\int_{\Omega_t} {(u_h^t)}^2}{\int_{\Omega_t} |\nb u_h^t|^2}\, 
=   \max\limits_{V_j\subset V_h\atop
\dim V_j=j} \  \min\limits_{u_h \in V_j \atop u_h\neq 0  }   \frac{\int_{\Omega_t} {(u_h \circ {F_t}\iv)}^2}{\int_{\Omega_t} |\nb (u_h \circ {F_t}\iv)|^2}\,
=: \, \vr_j(t)\qquad (t\in [0, t_0]),
\end{array}
$$
where we used the fact that the  transformation
$u_h^t:= u_h \circ {F_t}\iv$ from \eqref{vhtrans}
gives a one-to-one correspondence between the $j$-dimensional subspaces of $V_h^t$ and $V_h$.  
Moreover, we may transform the integrals from $\Omega_t$ to $\Omega$, using a change of variables with $F_t$:  in general,
\begin{equation}
\label{ell:intsubs}
\int_{\Omega_t} z = \int_{\Omega} (z\circ F_t)\   |\det(F_t')| \, ,
\end{equation}
and using that $\nb (u_h \circ {F_t}\iv) = (\nb u_h \circ {F_t}\iv)\boldsymbol{\cdot}
({F_t}\iv)' =  (\nb u_h \circ {F_t}\iv)\boldsymbol{\cdot}
{({F_t}'\circ {F_t}\iv)}\iv  $, we obtain
$$
\vr_j(t)= \max\limits_{V_j\subset V_h\atop
	dim V_j=j} \  \min\limits_{u_h \in V_j \atop u_h\neq 0  }  
\frac{ \int_{\Omega}  u_h^2\   |\det(F_t')|}{\int_{\Omega} 
	| \nb u_h \boldsymbol{\cdot} (F_t')\iv |^2 \   |\det(F_t')|}\, .
$$ 
For our theorem we must prove that \
$
\vr_j(t)  \to \vr_j(0) \ \ (\mbox{as } t\to 0).
$
We will prove, roughly speaking, that the \corr \ elements of the sets of numbers for $t$ and 0  after the "$\max \min$" are pairwise uniformly close to each other. 
For a given $u_h\in V_h$ and $u_h^t:= u_h \circ {F_t}\iv$, we introduce the notations
$$
N_t(u_h):=   \int_{\Omega}  u_h^2\   |\det(F_t')|, \qquad D_t(u_h):= \int_{\Omega} 
| \nb u_h \boldsymbol{\cdot}( F_t')\iv |^2 \   |\det(F_t')|, \qquad R_t(u_h):=\frac{N_t(u_h)}{D_t(u_h)}
$$
and for the case $t=0$ we just write
$$
N(u_h):= \int_{\Omega} {(u_h)}^2, \qquad D(u_h):= \int_{\Omega} |\nb u_h|^2, \qquad R(u_h):=\frac{N(u_h)}{D(u_h)}\ .
$$
If we prove that there exists a function $\ve: [0, t_0]\to \er^+$ such that for any $u_h\in V_h$
$$
|R_t(u_h)-R(u_h)|\le \ve(t)\to 0 ,
$$ 
then $|\vr_j(t)-\vr_j(0)|\le \ve(t)\to 0$ and we are done.

\sm 

Since $\det(F_0')= \det  I =1$, we may assume that $t_0$ is small enough such that $\det(F_t') >0$, hence we replace $|\det(F_t')|$ by $\det(F_t')$ in the above integrals.
Now, first, 
$$
N_t(u_h)-N(u_h) =   \int_{\Omega}  u_h^2\  \bigl( \det(F_t')-1\bigr) 
=   \int_{\Omega}  u_h^2\  \bigl( \det(I+G_t')-1\bigr) \, .
$$
Here a result from \cite[VII.3 (3)]{GGK}, together with the estimate $\|A\|_1\le d\, \|A\|$  and assumption \eqref{ftcont}
show for any $x\in \Omega$ that
\begin{equation}
\label{deltaggk}
|\det(I+G_t'(x))-1| \le \exp(\|G_t'(x)\|_1)-1 \le \exp(d\|G_t'(x)\|)-1 \le
\exp(d\|G_t'\|_\infty)-1 =:\delta(t)\to 0\, ,
\end{equation}
where  $\|\cdot\|$ is the Euclidean operator norm  
from \eqref{Gknorms} and $\|\cdot\|_1$ is induced by the Manhattan norm. 

Hence
\begin{equation}
\label{NNbecsF}
|N_t(u_h)-N(u_h) | \le   \delta(t)\, \int_{\Omega}  u_h^2 \, = \delta(t)\, N(u_h) , \qquad \mbox{where }\delta(t)\to 0\, .
\end{equation}

Second, using again $|\det(F_t')|=\det(F_t')$, 
$$
D_t(u_h)-D(u_h)   =  \int_{\Omega} \Bigl(  | \nb u_h \boldsymbol{\cdot}( F_t')\iv |^2 \   \det(F_t')  - | \nb u_h   |^2 \Bigr) 
$$
\begin{equation}
\label{terms2term}
=
\int_{\Omega} \Bigl(  | \nb u_h \boldsymbol{\cdot}( F_t')\iv |^2 \    - | \nb u_h   |^2 \Bigr) \, \det(F_t') \ + \ 
\int_{\Omega} | \nb u_h   |^2 \Bigl(    \det(F_t')  - 1 \Bigr) \, .
\end{equation}

For the first term in \eqref{terms2term}, we first observe that 
\begin{equation}
\label{Ktx}
( F_t')\iv = I+K_t, \quad \mbox{where} \quad   \|K_t \|_\infty = \| ( I+G_t')\iv - I\|_\infty   \le \frac{\|G_t'\|_\infty }{1-\|G_t'\|_\infty}
\end{equation}
and the latter inequality follows from the Neumann series expansion,
assuming without restriction that $t_0$ is small enough to imply  $\|G_t'\|_\infty<1$ for all 
$t\in [0, t_0]$. Then we use the estimate
$$
\Bigl|\,  |y\cdot (I+B)|^2 - |y|^2   \Bigr| = 
\Bigl|\,  2\la By, y\ra +|y\cdot B|^2  \Bigr|
\le \|B\| (2+\|B\|) |y|^2
$$
which holds for a general vector $y$ and matrix $B$, and apply it to $y= \nb u_h(x)$ and $B=K_t(x)$ for any $x\in \Omega$. Thus, with \eqref{Ktx}
and with the notation $\kappa(t):= \frac{\|G_t'\|_\infty }{1-\|G_t'\|_\infty} \bigl( 2+\frac{\|G_t'\|_\infty }{1-\|G_t'\|_\infty}  \bigr)$, we obtain 
$$
\Bigl|\,
| \nb u_h \boldsymbol{\cdot}( F_t')\iv |^2 \    - | \nb u_h   |^2\Bigr| \le \kappa(t)| \nb u_h   |^2  \quad \mbox{on}\ \Omega, \qquad \mbox{where} \quad \kappa(t)\to 0.
$$
Furthermore, \eqref{deltaggk} implies for all $x$ that \ \ $\det(F_t'(x))  = |\det(I+G_t'(x))| \le 
1+\delta(t) 
$. \
Hence, denoting $\overline{\delta}(t):=  \kappa(t) (1+\delta(t))$ which tends to 0 as $t \to 0$,
$$
\int_{\Omega} \Bigl|  | \nb u_h \boldsymbol{\cdot}( F_t')\iv |^2 \    - | \nb u_h   |^2 \Bigr| \, \det(F_t') \ \le \kappa(t)\, (1+\delta(t)) \int_{\Omega} | \nb u_h   |^2 \, =\,  \overline{\delta}(t)\, \int_{\Omega} | \nb u_h   |^2 \, .
$$
For the second term in \eqref{terms2term}, we can use \eqref{deltaggk} again, so 
altogether we obtain 
\begin{equation}
\label{DDbecsF}
|D_t(u_h)-D(u_h) | \le 
\bigl( \overline{\delta}(t) +\, \delta(t)   \bigr)
\int_{\Omega} | \nb u_h   |^2 \, =  
\widehat{\delta}(t)\, \int_{\Omega} | \nb u_h   |^2 
= \widehat{\delta}(t)\,
\, D(u_h)\, ,
\end{equation}
where $\widehat{\delta}(t):= \overline{\delta}(t) +\, \delta(t)\to 0$. 
Finally, for any $t\in [0, t_0]$ we have a lower bound
\begin{equation}
\label{Dtbecslow}
D_t(u_h) \ge \mu_0^2 \, \mu_1 \int_{\Omega} | \nb u_h   |^2 \, = \mu_0^2 \, \mu_1 \, D(u_h),
\end{equation}
where $t_0$ can be chosen small enough such that
$$
\mu_0:= \min\limits_{t\in [0, t_0]} \frac{1}{\|F_t'\|_\infty} >0, \qquad 
\mu_1:= \min\limits_{t\in [0, t_0]\atop x\in\overline{\Omega}} |\det(F_t'(x))| >0 \, .
$$

Altogether,
$$
\Big| R_t(u_h)-R(u_h)\Big| \le \frac{\bigl| N_t(u_h)-N(u_h)\bigr| \,  D(u_h)\,  + \ N(u_h)\, \bigl| D(u_h)-D_t(u_h)\bigr| }{D_t(u_h)\,  D(u_h)}
$$

$$
= \frac{ N(u_h)  }{D(u_h)} \ \frac{ D(u_h)  }{D_t(u_h)} \
\Bigl( 
\frac{\bigl| N_t(u_h)-N(u_h)\bigr| }{N(u_h)}\,  + \ 
\frac{  \bigl| D(u_h)-D_t(u_h) \bigr| }{D(u_h) }
\Bigr)\, .
$$
Now, using the Poincaré--Friedrichs inequality $N(u_h)\le C_\Omega D(u_h)$, 
then \eqref{Dtbecslow}, \eqref{NNbecsF}  and  \eqref{DDbecsF}, \resp , 
we obtain
\begin{equation}
\label{finbecs}
\Big| R_t(u_h)-R(u_h)\Big| \le \frac{C_\Omega}{\mu_0^2 \, \mu_1}
\bigl(  \delta(t) + \widehat{\delta}(t)      \bigr) \ =: \ve(t)\, \to 0 \quad \mbox{as } t\to 0,
\end{equation}
where $\ve(t)$ solely depends on Assumption \ref{sec1} and is thus independent of $u_h$. Hence $\vr_j(t)  \to \vr_j(0)$ as $t\to 0$, which was to be proved. 
\qed
\end{proof*}

\begin{remark}
The proof shows uniform convergence in the sense that 
\[
\sup_{\substack{V_h\subset H_0^1(\Omega)\\
1\leq j\leq \dim V_h}}
\left|
\frac{1}{\lambda_j^h(\Omega_t)}
-
\frac{1}{\lambda_j^h(\Omega)}
\right|
\leq \varepsilon(t)
\to 0 
\]
where $\varepsilon(t)$
is  independent of the eigenvalues $j$ and subspaces $V_h$.  
\end{remark}

\subsection{Continuity of eigenvalues for a sequence of refined FE subspaces}
\label{sec-refined}

 In practice, both the domain and its discretization vary: a sequence of
domains $\Omega_n$ approximating $\Omega$ is meshed with increasingly fine
meshes. Properly combining Theorem~\ref{chenais} with Proposition~\ref{propbof1}, we
show that the discrete eigenvalues $\lambda_j^{h_n}(\Omega_n)$ converge to the
continuous eigenvalues $\lambda_j(\Omega)$ of the limit domain.

\begin{theorem}
\label{refined-fe}
Let $\Omega \subset \mathbb{R}^2$ be a bounded domain, and let $\{\Omega_n\}_{n\in\mathbb{N}}\subset \mathbb{R}^2$ be a sequence of convex
polygonal domains satisfying the conditions of Theorem \ref{chenais}, i.e. $\Omega_n$ are uniformly Lipschitz open subsets of a given box $B$, and   $\Omega_n \rightarrow \Omega$ with respect to the Hausdorff distance. 

Consider a sequence $h_n>0$ ($n\in \mathbb{N}^+$), such that $h_n\to 0$, and a shape regular family of finite element discretizations on $\Omega_n$ with first order Courant elements and mesh parameter $h_n$.
 Then, for all fixed indices $j \in \mathbb{N}^+$, with the notations \eqref{conteigsq} and \eqref{disceigsqmtx},
\begin{equation}
\label{th32conv}
	\lambda_j^{h_n}(\Omega_n) \rightarrow \lambda_j(\Omega)
	\qquad \mbox{as } n\to \infty.
 \end{equation}
\end{theorem}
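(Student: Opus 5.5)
The plan is a triangle-inequality splitting:
\[
|\lambda_j^{h_n}(\Omega_n)-\lambda_j(\Omega)|\le |\lambda_j^{h_n}(\Omega_n)-\lambda_j(\Omega_n)|+|\lambda_j(\Omega_n)-\lambda_j(\Omega)|.
\]
The second term tends to zero by Theorem~\ref{chenais} combined with Theorem~\ref{prop1}: Hausdorff convergence of uniformly Lipschitz domains gives $\Omega_n\gconv\Omega$. (The limit is convex, hence Lipschitz, since Hausdorff limits of convex sets are convex.) In particular $\lambda_j(\Omega_n)$ is bounded uniformly in $n$. By \eqref{third-ineq28} the first term equals $\lambda_j^{h_n}(\Omega_n)-\lambda_j(\Omega_n)\ge 0$. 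So it remains to prove an upper bound that is \emph{uniform in the domain}. Proposition~\ref{propbof1} alone is not enough, because it fixes the polygon while $h\to0$. I would therefore re-derive its quantitative form and track the constants.

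For the upper bound I would use the standard min--max argument (as in \cite{boffi}). Let $E_j^n$ be the span of the first $j$ $L^2(\Omega_n)$-orthonormal eigenfunctions on $\Omega_n$, and let $\Pi_n$ be the Ritz (elliptic) projection onto the Courant space $V_{h_n}$. If $\Pi_n$ is injective on $E_j^n$, then testing the discrete min--max with $\Pi_n E_j^n$ gives
\[
\lambda_j^{h_n}(\Omega_n)\le \max_{u\in E_j^n,\ \|u\|_{L^2}=1}\frac{\|\nabla \Pi_n u\|^2}{\|\Pi_n u\|^2}\le \frac{\lambda_j(\Omega_n)}{1-2\sup_{u}\bigl|(u,u-\Pi_n u)\bigr|-\sup_u\|u-\Pi_n u\|^2}.
\]
Here we use $\|\nabla\Pi_n u\|\le\|\nabla u\|$. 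So it suffices to show $\sup_{u\in E_j^n,\|u\|=1}\|u-\Pi_n u\|_{L^2}\to0$.

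The key step is this $L^2$ bound for the Ritz projection error, uniform in $n$. Since $\Omega_n$ is a convex polygon, the Dirichlet Laplacian has full $H^2$ regularity with the constant estimate $|v|_{H^2(\Omega_n)}\le\|\Delta v\|_{L^2(\Omega_n)}$, and this constant does not depend on the domain. For an eigenfunction $u$ this gives $|u|_{H^2}\le\lambda_j(\Omega_n)\|u\|$. Interpolation on a shape-regular mesh then gives $\|\nabla(u-\Pi_n u)\|\le Ch_n|u|_{H^2}$, with $C$ depending only on the shape-regularity constant. An Aubin--Nitsche duality argument, again using the convex $H^2$ regularity, gives $\|u-\Pi_n u\|\le C^2h_n^2\lambda_j(\Omega_n)$. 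Because $\lambda_j(\Omega_n)$ is bounded, this tends to zero, uniformly over the unit ball of $E_j^n$, which is finite-dimensional. Hence the first term is $O(h_n^2)$, and \eqref{th32conv} follows.

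I expect this uniform-in-domain a priori estimate to be the main obstacle. It depends on the convexity hypothesis: convexity is what makes the $H^2$ constant domain-independent, and without it re-entrant corners would make the constants depend on $n$. The remaining inputs are easier to justify: the interpolation constant depends only on shape regularity, and the convergence $\lambda_j(\Omega_n)\to\lambda_j(\Omega)$ keeps $\lambda_j(\Omega_n)$ bounded.
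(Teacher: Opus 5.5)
Your proposal is correct and follows essentially the same route as the paper: the lower bound $\lambda_j^{h_n}(\Omega_n)\ge\lambda_j(\Omega_n)$, then the min--max upper bound from \cite[Section~8]{boffi} with its constant made uniform in $n$ via the shape-regular interpolation estimate and the Miranda--Talenti estimate on convex polygons, and finally Theorem~\ref{chenais} with Theorem~\ref{prop1} for $\lambda_j(\Omega_n)\to\lambda_j(\Omega)$. The only difference is that you re-derive the $O(h_n^2)$ $L^2$ bound for the Ritz projection yourself via Aubin--Nitsche, whereas the paper quotes Boffi's bound and controls its constant $\overline{C}(\Omega_n)$ directly.
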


\begin{proof*}
Let us fix an index $j$. We will show the convergence using the Squeeze theorem. 	
The lower bound of the sequence $\lambda_j^{h_n}(\Omega_n)$ comes from
 (\ref{third-ineq28}), which states that for any $n\in\mathbb{N}^+$
	\begin{equation} \label{sqlow}
	\lambda_j^{h_n}(\Omega_n) \geq 	\lambda_{j}(\Omega_n) 
		\text{.}
	\end{equation}
For the upper bound, we use \cite[Section 8]{boffi}, where a fixed domain was considered. Applying its estimation to $\Omega_n$, we obtain
\begin{equation}
\label{squpp}
\lambda_j^{h_n}(\Omega_n)
\leq
\frac{\lambda_{j}(\Omega_n)}{\left(1-\overline{C}(\Omega_n) \, h_n^2\, \lambda_{j}(\Omega_n)\right)^2 }
\end{equation}	
where the constant $\overline{C}(\Omega_n)>0$ comes from the 	
 following interpolation bound:
	\begin{equation}
		\|v-\Pi_h v\|_{L^2(\Omega_n)} \leq \overline{C}(\Omega_n)\,  h_n^2 \|\Delta v\|_{L^2(\Omega_n)}
	\end{equation}
in which $\Pi_h v$ denotes the 	interpolant of $v$ in the FE subspace. We must show that $\overline{C}(\Omega_n)$ is bounded independently of $n$. First, recall the 
interpolation estimate
\begin{equation}
\label{interp}
\|v-\Pi_h v\|_{L^2(\Omega_n)} \leq C_{int}\,  h_n^2  | v |_{H^2(\Omega_n)}
\end{equation}	
where the constant $C_{int}>0$ is independent of $\Omega_n$ for the considered FEM, see, e.g., \cite[section 3.2.2.]{Suli}.
Further, from the convexity of $\Omega_n$,
\begin{equation}
\label{MirTal}
  | v |_{H^2(\Omega_n)} \le \|\Delta v\|_{L^2(\Omega_n)}
\end{equation}	
by the Miranda–Talenti estimate, see, e.g., \cite[Thm 2]{SmSuli}.
Altogether, from \eqref{interp}--\eqref{MirTal}, 
\ $\, \overline{C}(\Omega_n) \le C_{int}$ \ $(\forall n\in \mathbb{N}^+)$.
Hence, from \eqref{sqlow} and \eqref{squpp}, 
\begin{equation}
\label{squeeze}
\lambda_{j}(\Omega_n)
\leq
\lambda_j^{h_n}(\Omega_n)
\leq
\frac{\lambda_{j}(\Omega_n)}{\left(1-C_{int} \, h_n^2\, \lambda_{j}(\Omega_n)\right)^2 }
\text{.}
\end{equation}
Here Theorem \ref{chenais} yields that $\Omega_n$ $\gamma$-converges to $\Omega$, and hence  Theorem 	\ref{prop1} implies that 	
$$
\lambda_j(\Omega_n) \to \lambda_j(\Omega) \qquad \mbox{as \ $n \rightarrow \infty$. }
$$
Using also the assumption $h_n \rightarrow 0$,  we find that both sides of \eqref{squeeze} tend to  $\lambda_j(\Omega)$, hence \eqref{th32conv} holds. 
\qed
\end{proof*}

\begin{remark}
	We expect that  convexity of $\Omega_n$ might be relaxed 
to a uniform upper  bound	$\omega < 2\pi$ for the  angles.
If one could prove  interpolation and regularity estimates (in analogy with \cite{Grisvard, Ern})
$$
\|v-\Pi_h v\|_{L^2(\Omega_n)} \leq \tilde C_{int}\,  h_n^{1+\alpha}  | v |_{H^{1+\alpha}(\Omega_n)}, \qquad | v |_{H^{1+\alpha}(\Omega_n)} \le C_{1+\alpha} \|\Delta v\|_{L^2(\Omega_n)}\qquad (\alpha < \pi/\omega),
$$
\resp , then the above proof would work with replacing $C_{int} \, h_n^2 $ in \eqref{squeeze} by $\tilde C_{int}\, C_{1+\alpha}  \,  h_n^{1+\alpha}$.  Presently we do not know about such estimates with  bounds being uniform in $\Omega_n$. 
\end{remark}
 
\section{Numerical experiments}
\label{secnumexp}

In this section, our aim is to test numerically the domain dependence of the performance of the GMRES/GCR method via the relative residual norms for a wide range of selected domain sequences. 
We will find that the theoretical results are confirmed: we observe in several examples the continuous dependence of the sequence of residual norms when theory ensures continuous dependence of the eigenvalues on the domain, and in turn, we see failure of continuous dependence in the last example when it is expected by the theory.

First, we consider examples of smoothly transformed fixed FE subspaces illustrating the results of section \ref{sec1}, then we continue the experiments with examples of a sequence of refined FE subspaces, which will demonstrate the results of section \ref{sec-refined}. Finally, as further interesting examples, we test the continuous dependence on iterations of the Koch snowflake from section \ref{ss:examples_conv_not}, and show as a last example that continuity is lost for the Ball Pit domain \ref{ball-pit-ex} also in the discretized form.

Throughout the upcoming numerical experiments, we always solve the convection-diffusion problem
\begin{equation}
	\label{model-problem}
	\begin{cases}
		-\Delta u + (\beta,0)\cdot\nabla u = 1 \\
		u|_{\partial\Omega} = 0
	\end{cases}
\end{equation}
where the positive parameter $\beta$ will be specified for each test in such a way that the iterative method takes approximately 20 steps to reach a tolerance of $10^{-8}$ for the relative residual norms.

The FE discretization of the domains was carried out so that the meshes contain at least $15{,}000$ nodes and about twice as many $1^\text{st}$-order triangular Lagrange elements. The meshes were generated by gmsh in Python with a setting to make them as uniform as possible, although they become finer near boundaries consisting of closely located boundary nodes. The discretization process was fully handled by FEniCSx, but the resulting systems of linear equations were solved by a custom-programmed iterative solver following the algorithm of the preconditioned GCR method in section \ref{s:cgn_est1}.

\subsection{Examples of smoothly transformed fixed FE subspaces}
In this section, we fix an initial domain with an FE subspace, and perform a smooth transformation on it. Our first example is the simultaneous rotation and stretching of a regular hexagon, which is then followed by the continuous approximation of a circle and an annulus from both sides. In all three cases, the relative residual norms show continuous and monotonic dependence on the domain. These results support the claim of Theorem \ref{fixed-fe}.

\subsubsection{Rotation and stretch of a regular hexagonal domain}

Let $\Omega$ be the inscribed regular hexagon of the unit disk which has a vertex at $(1,0)$. After creating the mesh, we perform a smooth transformation involving counter-clockwise rotation and stretching. For a visual representation see Figure \ref{cont-hexagon}. The results of the numerical tests with $\beta = 1$ can be seen in Figure \ref{hexagon-cont-graphs}. In the figure, the blue curve represents the norms corresponding to the initial domain $\Omega$, and the red curve corresponds to the final domain. The different steps of the transformation are denoted by the color gradient from blue to red in a continuous way.

\begin{figure}[h!]
	\centering
	\includegraphics[width=1\textwidth]{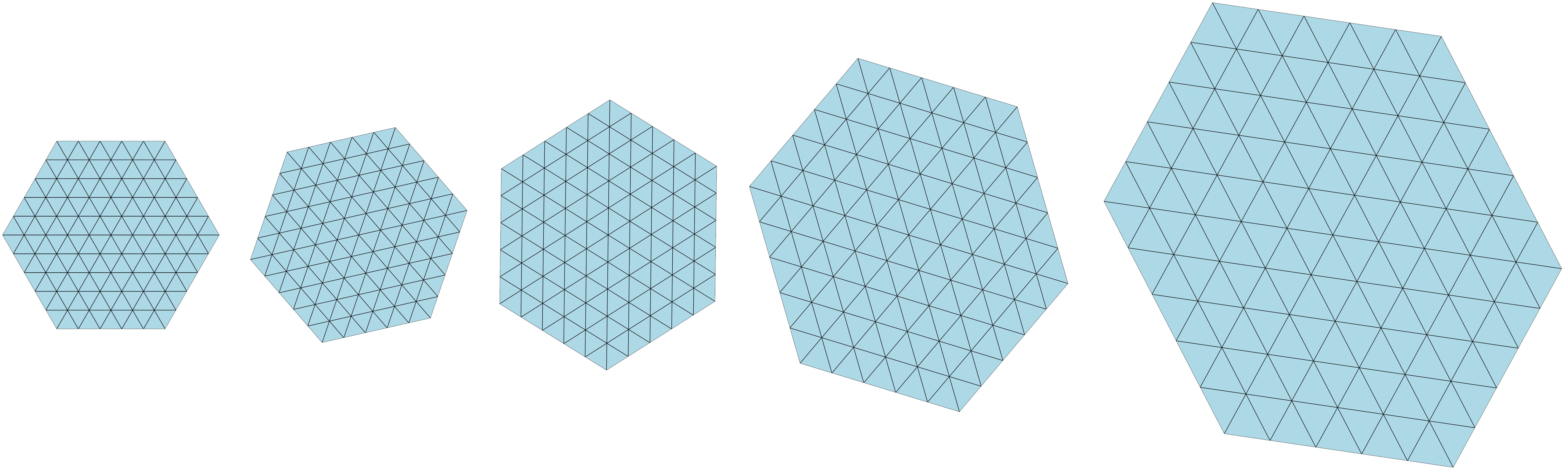}
	\caption{Continuous transformation of the regular hexagon involving rotation and stretching.}
	\label{cont-hexagon}
\end{figure}
\begin{figure}[h!]
	\centering
	\includegraphics[width=0.5\textwidth]{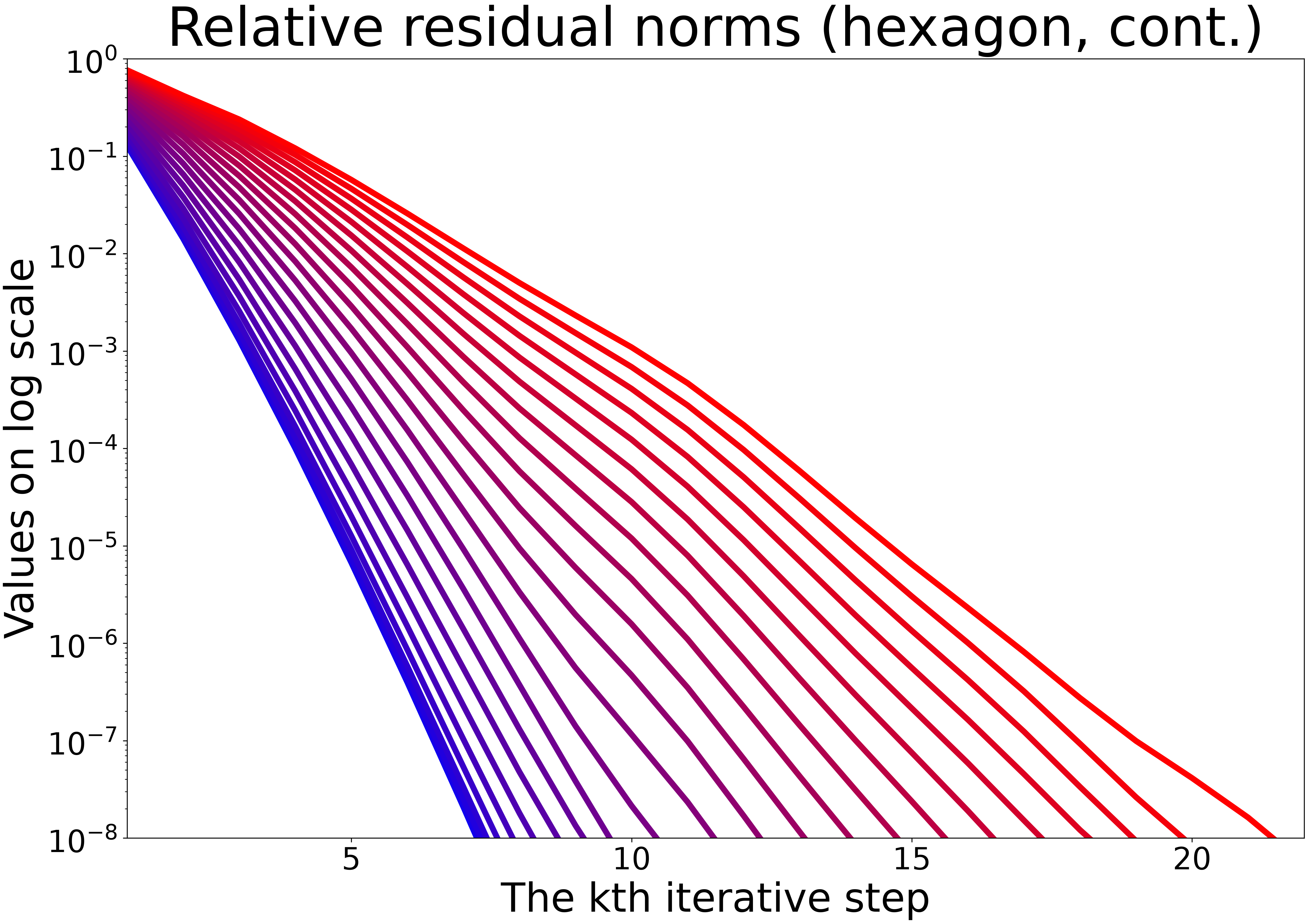}
	\caption{Relative residual norms for the continuous transformation of the hexagon.}
	\label{hexagon-cont-graphs}
\end{figure}

\subsubsection{Continuous approximation of the unit disk from both sides}
Let $\Omega$ be the unit disk. Initially, the domain for the inner approximation is the inscribed square of $\Omega$ which has diagonals coinciding with the axes, and for the outer approximation, it is the circumscribed square with the same positioning. The considered continuous transformation illustrated in Figure \ref{circle-aprx} expands the inscribed square while fixing the 4 vertices and shrinks the circumscribed square while fixing the midpoints of the sides. Meanwhile, the structure of the FE subspace is preserved.

The results of the numerical tests with $\beta=8$ can be seen in Figure \ref{circle-annulus-cont-graphs}. We can observe continuous monotonic behavior as the color gradient from blue to red moves inwards uniformly for the inner (solid) and outer (dashed) approximations as well, which are strictly separated.

\begin{figure}[b!]
	\centering
	\includegraphics[width=1\textwidth]{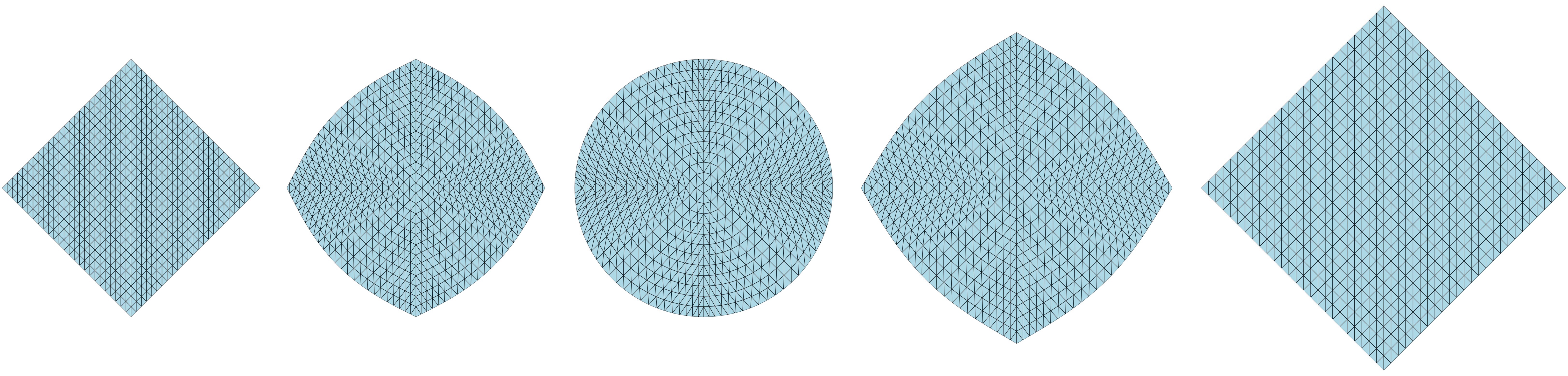}
	\caption{Continuous monotonic approximation of the unit disk from the inside (left) and the outside (right). Animations are available \href{https://ladosbalint.web.elte.hu/cont-dep/square-in.gif}{\textcolor{blue}{here}} and \href{https://ladosbalint.web.elte.hu/cont-dep/square-out.gif}{\textcolor{blue}{here}}.}
	\label{circle-aprx}
\end{figure}
\begin{figure}[b!]
	\centering
	\includegraphics[width=1\textwidth]{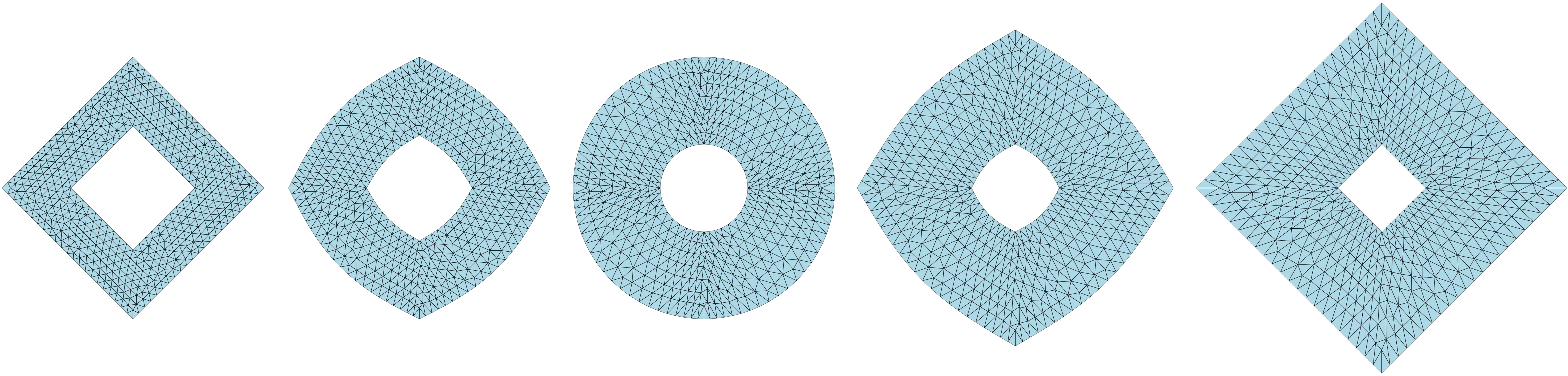}
	\caption{Continuous monotonic approximation of the annulus of radius 1 and 3 from inside (left) and outside (right). Animations are available \href{https://ladosbalint.web.elte.hu/cont-dep/annulus-in.gif}{\textcolor{blue}{here}} and \href{https://ladosbalint.web.elte.hu/cont-dep/annulus-out.gif}{\textcolor{blue}{here}}.}
	\label{annulus-aprx}
\end{figure}

\subsubsection{Continuous approximation of an annulus from both sides}
Let $\Omega$ be the annulus determined by the unit circle and the circle of radius 3 centered at the origin. Initially, the domain for the inner approximation is determined by the circumscribed square of the unit circle and the inscribed square of the circle with radius 3, and vice versa for the outer approximation. The considered continuous transformation illustrated in Figure \ref{annulus-aprx} fixes the vertices of the inscribed squares and the midpoints of the circumscribed squares and expands or shrinks the domain while deforming it into the annulus. Meanwhile, the structure of the
FE subspace is preserved. The results of the numerical tests with $\beta=3$ can be seen in Figure \ref{circle-annulus-cont-graphs}. Overall, the same behavior can be observed as for the previous example.

\subsection{Examples of a sequence of refined FE subspaces}
In this section, we fix a domain $\Omega$ which is approximated by a sequence of domains $\Omega_n$. The generated mesh gets finer as $n$ increases. Our first two examples will be the discrete analogues of the approximation of the circle and the annulus from the previous section. These will be followed by the discrete approximation of an L-shaped and an irregular kidney-shaped domain. Finally, the Koch snowflake will be considered.  In all cases, the relative residual norms show continuous and monotonic dependence on the domain. These results support the claim of Theorem \ref{refined-fe}.

\subsubsection{Discrete approximation of the unit disk from both sides}
Let $\Omega$ be the unit disk. For the inner approximation, let $\Omega_n \;(n \geq 3)$ be the inscribed regular polygon of the unit disk with $n$ vertices that has a vertex placed at $(1,0)$, and for the outer approximation, consider the enlarged versions of these centered at the origin such that they are circumscribed regular polygons of the unit disk. For any fixed $n \geq 3$, the FE subspaces of the inscribed and circumscribed polygons have the same structure and number of nodes and elements. For a visual representation see Figure \ref{circle-disc}. The results of the numerical tests with $\beta = 8$ can be seen in Figure \ref{circle-annulus-disc-graphs}.

\begin{figure}[h!]
	\centering
	\includegraphics[width=1\textwidth]{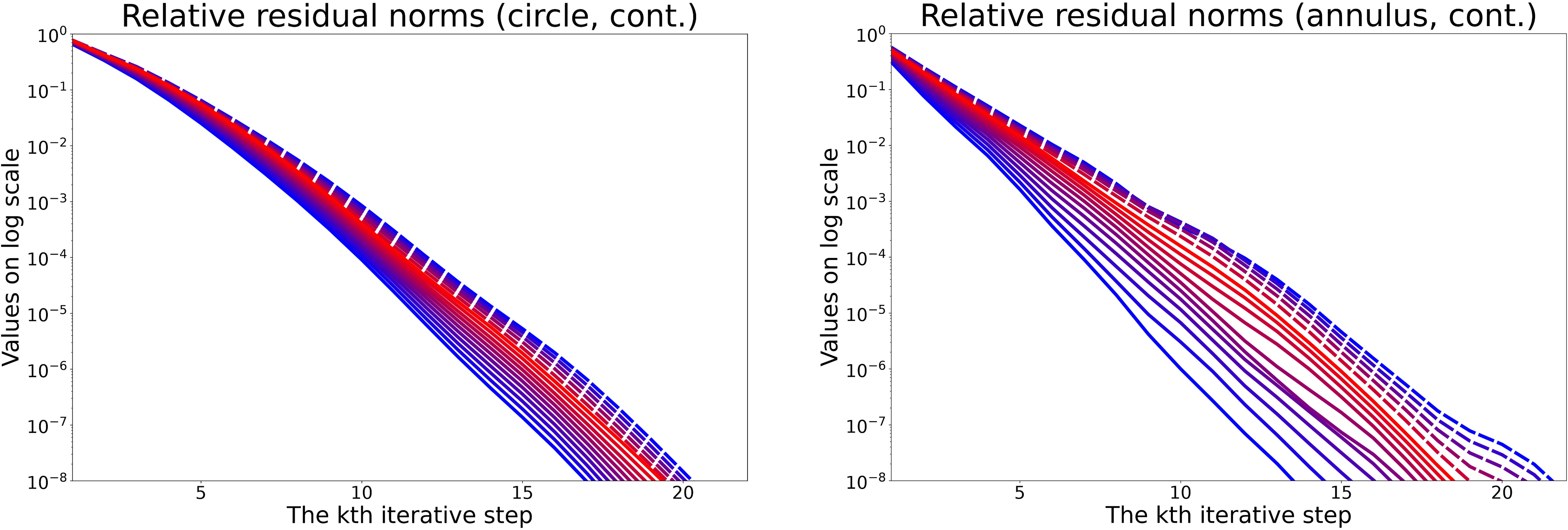}
	\caption{Relative residual norms for the inner (solid) and outer (dashed) continuous approximation of the unit disk (left) and the annulus (right).}
	\label{circle-annulus-cont-graphs}
\end{figure}

\subsubsection{Discrete approximation of an annulus from both sides}
Let $\Omega$ be the annulus determined by the unit circle and the circle of radius 3 centered at the origin. For the inner approximation, let $\Omega_n \;(n \geq 3)$ be the domain between the circumscribed and inscribed regular polygon of the inner and outer circle such that they have $n$ vertices and a vertex on the positive side of the horizontal axis of the coordinate system. For the outer approximation, consider the enlarged versions of these centered at the origin such that they approximate the annulus from the outside similarly to the previous constructions. For a visual representation see Figure \ref{annulus-disc}. The results of the numerical tests with $\beta = 3$ can be seen in Figure \ref{circle-annulus-disc-graphs}.

\begin{figure}[t!]
	\centering
	\includegraphics[width=1\textwidth]{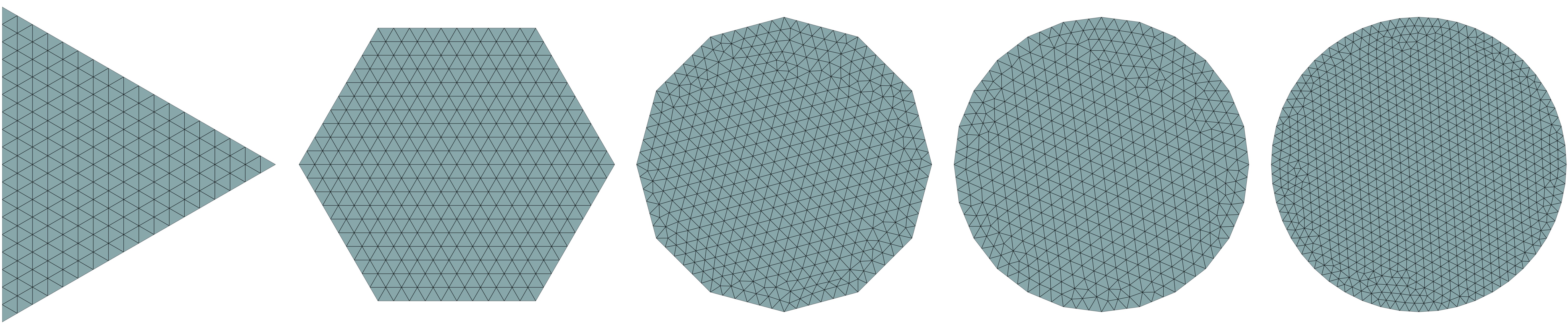}
	\caption{Discrete monotonic approximation of the unit disk with inscribed regular polygons of $3,\;6,\;12,\;24,\;48$ vertices from left to right.}
	\label{circle-disc}
\end{figure}
\begin{figure}[t!]
	\centering
	\includegraphics[width=1\textwidth]{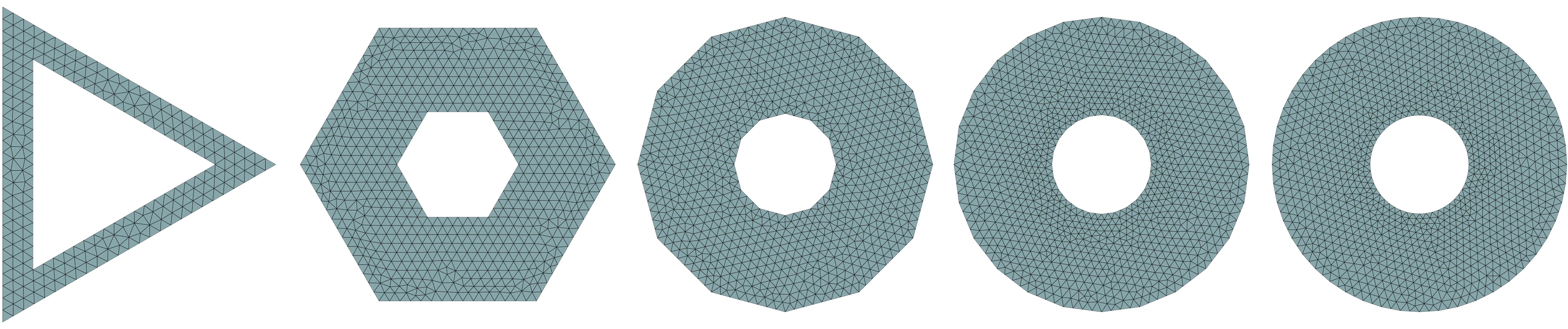}
	\caption{Discrete monotonic approximation of the annulus with regular polygons of $3,\;6,\;12,\;24,\;48$ vertices from left to right.}
	\label{annulus-disc}
\end{figure}
\begin{figure}[h!]
	\centering
	\includegraphics[width=1\textwidth]{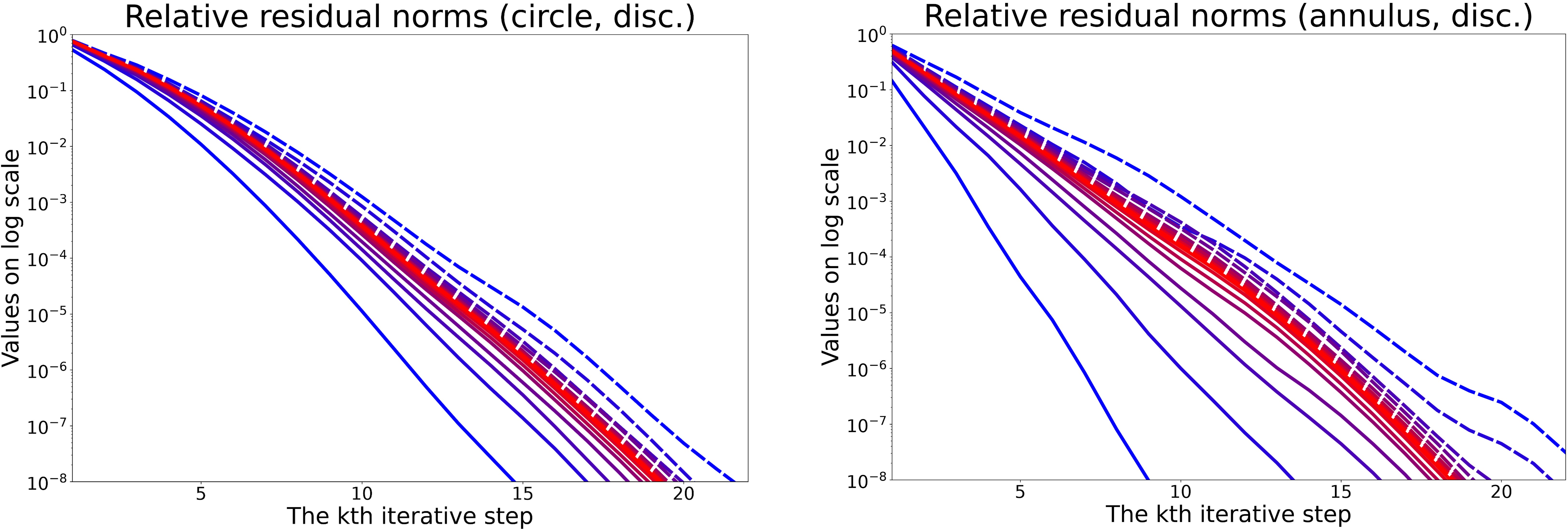}
	\caption{Relative residual norms for the inner (solid) and outer (dashed) discrete approximation of the unit disk (left) and the annulus (right).}
	\label{circle-annulus-disc-graphs}
\end{figure}

\subsubsection{Discrete monotonic approximation of an L-shaped domain}
In this example, let $\Omega$ be an L-shaped domain having a concave corner at the origin, which is interesting as it can cause numerical difficulties in general. This domain is approximated by first removing the quadrant of a circle from the upper right corner of the square, and then replacing the concave corner in the middle with a circular arc tangent to the sides of the L-shaped domain at its endpoints. For a visual representation see Figure \ref{L-shape-disc}. The results of the numerical tests with $\beta = 8$ can be seen in Figure \ref{L-shape-kidney-disc-graphs}.

\subsubsection{Discrete approximation of a kidney-shaped domain}

Let $\Omega_n \;(n \geq 3)$ be a polygonal domain with $n$ vertices uniformly placed on a predefined smooth kidney-shaped closed curve which defines the approximated domain $\Omega$. In this example, instead of having a distinct inner or outer approximation, the direction of approximation depends on whether the boundary of $\Omega$ is locally convex or concave at a given region. For a visual representation see Figure \ref{kidney-disc}. The results of the numerical tests with $\beta = 7$ can be seen in Figure \ref{L-shape-kidney-disc-graphs} when $n=4, 7, 9, 11, 40$.

\begin{figure}[h!]
	\centering
	\includegraphics[width=1\textwidth]{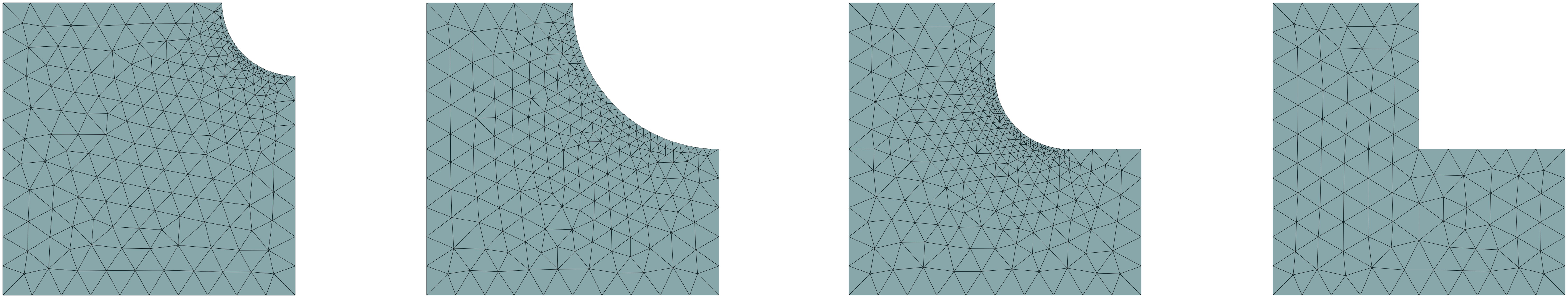}
	\caption{Discrete monotonic approximation of the L-shaped domain.}
	\label{L-shape-disc}
\end{figure}
\begin{figure}[h!]
	\centering
	\includegraphics[width=1\textwidth]{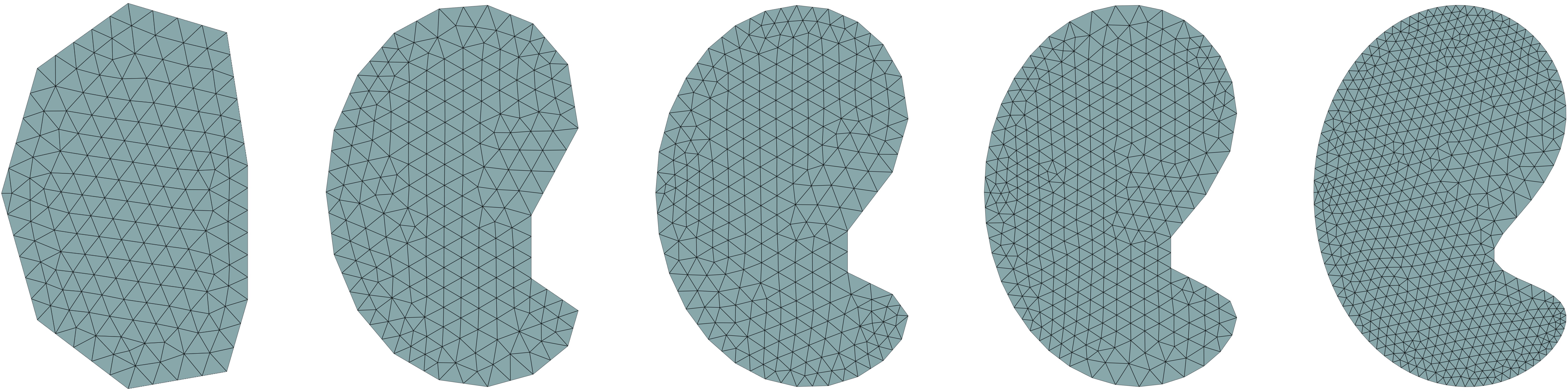}
	\caption{Discrete approximation of the kidney-shaped domain with $10,\;20,\;30,\;40,\;100$ vertices from left to right.}
	\label{kidney-disc}
\end{figure}
\begin{figure}[h!]
	\centering
	\includegraphics[width=1\textwidth]{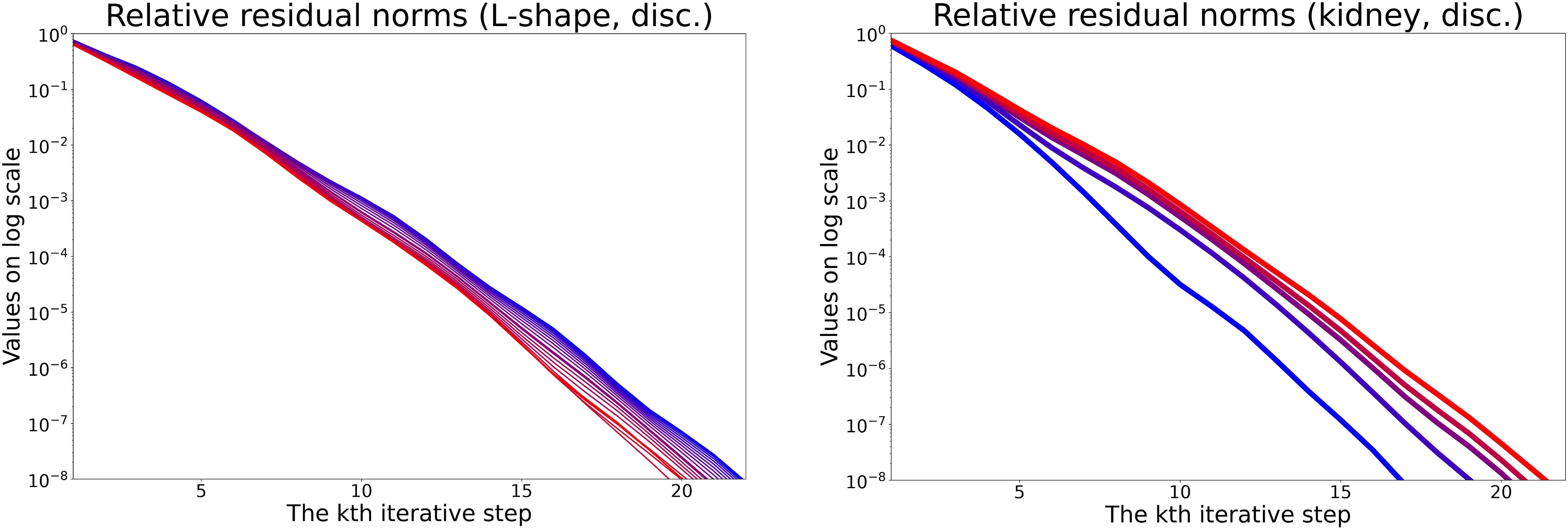}
	\caption{Relative residual norms for the discrete approximation of the L-shaped and the kidney-shaped domain.}
	\label{L-shape-kidney-disc-graphs}
\end{figure}

\subsection{The Koch snowflake and the Ball Pit}
The iterative construction of the Koch snowflake was described in section \ref{ss:examples_conv_not}. Let $\Omega_n$ be its iterations. For a visual representation of the Koch snowflake, see Figure \ref{fractal-disc}. The results of the numerical tests with $\beta = 13$ can be seen in Figure \ref{ball-pit-disc-graphs} (left). We can observe the continuous monotonic dependence of the curves on the domain, which was predicted by the theoretical considerations.

\subsubsection{Non-continuous dependence in the Ball Pit}
In this section, we present the numerical experiments on the Ball Pit domain which was analyzed in section \ref{ss:examples_conv_not}. In this case, we know that there exist eigenvalues of the Laplacian which do not depend continuously on the domain, so we expect that the relative residual norms also do not converge to the norms corresponding to the limit set.

For a visual representation of the Ball Pit, see Figure \ref{ball-pit}. The results of the numerical tests with $\beta = 8$ can be seen in Figure \ref{ball-pit-disc-graphs} (right). We can observe that apart from the first few iterative steps, the curves do not converge to the relative residual norms corresponding to the limit set, which is visualized as a red dashed curve in the figure. This behavior is consistent with the theoretical findings, and it reveals that continuous dependence is not automatic also for the discretized problems in general.

\begin{figure}[h!]
	\centering
    \includegraphics[width=1\textwidth]{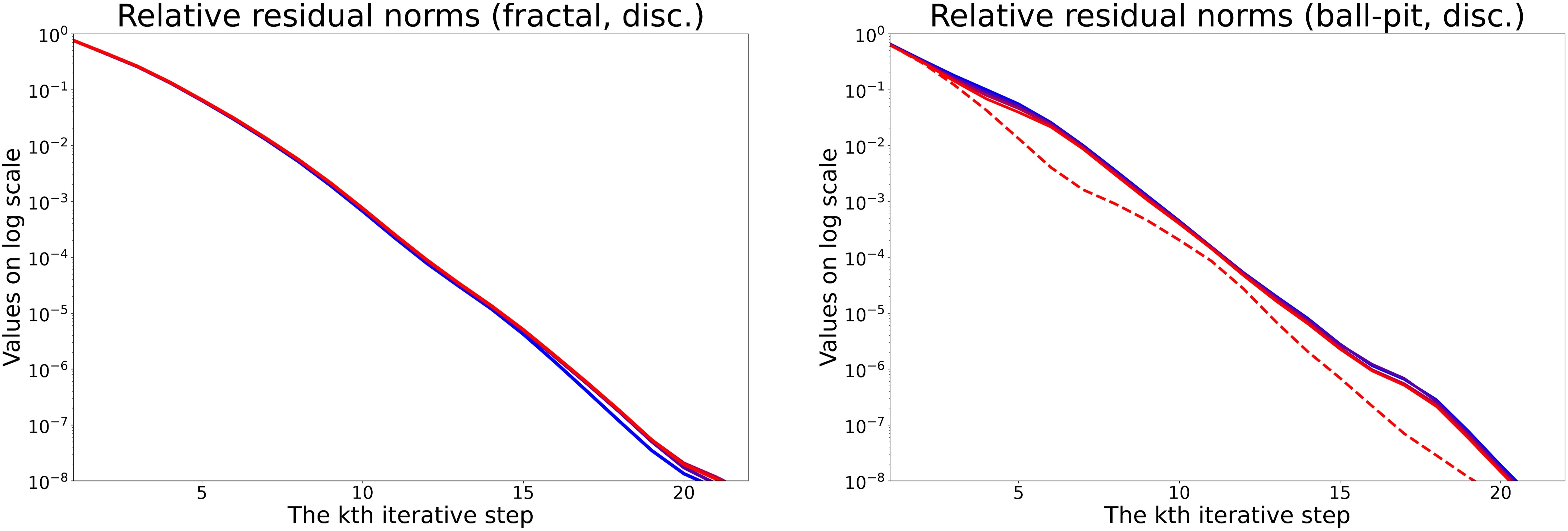}
	\caption{Relative residual norms for the first few iterations of the Koch snowflake (left) and for the iterations of the Ball Pit (right) when $n=3,4,5,6$ (blue-red solid curves) and for the limit set (red dashed curve).}
	\label{ball-pit-disc-graphs}
\end{figure}

\subsection{Conclusions and future directions}
\label{subsconcf}

In this paper we have shown that the analysis of the iterative solution of a class of PDEs can be characterized with the eigenvalues of the Laplacian.
We have also identified geometric conditions (convexity, uniform Lipschitz regularity, or a topological restriction in the plane) under which this quantity depends continuously on the domain, on both the PDE and the discrete level.
In particular, the continuity established for domains bounded by a Koch snowflake is reflected in the discrete case, and so is the failure of continuity for the perforated domains of Example~\ref{ball-pit-ex}.
Altogether, we have seen that this kind of meeting of the continuous and discrete levels is really fruitful: continuity questions  on the discrete level, where the underlying matrices change with the mesh, can be understood
once transferred to the operator level, and the conclusions transfer back in the behaviour of the discrete problems.

Our study used convection-diffusion type problems as an important class of stationary PDEs, since such equations model several fundamental physical phenomena. We note, on the other hand, that the main point in reducing the convergence bounds to eigenvalues substantially exploited the decomposition of the underlying operator in the form
$$
I+K,
$$
where $K$ is compact.

Hence we expect that similar results to ours may be elaborated for some other remarkable models where the  operator has a similar structure, often in the concrete form $\frac{1}{2}I+K$:
\begin{itemize}
\item Systems of  elliptic PDEs,  without or with convection terms, using decoupled preconditioners, where the components of $K$ are lower order operators as in the present paper, see  \cite{KA05, eqopsurv}.
\item The Dirichlet problem can be recast to a boundary integral equation using a double layer potential, leading to the form $\frac{1}{2}I+K$ with a compact boundary integral operator $K$, see, e.g., \cite{CH89}.
\item A similar situation as above holds when the 
Maxwell transmission problem is described with the Calderón operator \cite{Ces96}.
\item The Schur complement operator for the Stokes problem on smooth domains can be reduced to a similar structure, exploited in the superlinear convergence estimation of the  Krylov--Uzawa iteration \cite{AK21}.
\end{itemize}
An analogous study of continuous geometry  dependence in such problems might be the subject of future work.

\noindent {\bf Acknowledgements.}
The research of J. Kar\'atson was supported by the Hungarian National Research, Development and Innovation Fund (NKFIH) under the scheme ELTE TKP 2021-NKTA-62 and the grant no. K137699. 

\pagestyle{empty}

\providecommand{\bysame}{\leavevmode\hbox to3em{\hrulefill}\thinspace}
\providecommand{\MR}{\relax\ifhmode\unskip\space\fi MR }
\providecommand{\MRhref}[2]{%
  \href{http://www.ams.org/mathscinet-getitem?mr=#1}{#2}
}
\providecommand{\href}[2]{#2}

\end{document}